\newcommand{\gettikzxy}[3]{%
  \tikz@scan@one@point\pgfutil@firstofone#1\relax
  \edef#2{\the\pgf@x}%
  \edef#3{\the\pgf@y}%

}
\def\clr{   \color{red}}
\def\clb{   \color{black}}
\newcommand{\tikzAngleOfLine}{\tikz@AngleOfLine}
  \def\tikz@AngleOfLine(#1)(#2)#3{%
  \pgfmathanglebetweenpoints{%
    \pgfpointanchor{#1}{center}}{%
    \pgfpointanchor{#2}{center}}
  \pgfmathsetmacro{#3}{\pgfmathresult}%
  }
\newcommand{\be}{\begin{otherlanguage}{english}}
\newcommand{\ee}{\end{otherlanguage}}
\theoremstyle{definition}
\newtheorem{defn}{Definition}
\newtheorem{que}{Question}
\newtheorem{rem}{Remark}
\theoremstyle{plain}
\newtheorem{lem}{Lemma}
\newtheorem{prop}{Proposition}
\newtheorem{thm}{Theorem}
\newtheorem{cor}[prop]{Corollary}
\newtheorem*{thm*}{Theorem}
\newtheorem*{con}{Conjecture}
\theoremstyle{remark}
\numberwithin{equation}{subsection}
\newcommand{\beq}{\begin{equation}}
\newcommand{\eeq}{\end{equation}}
\newcommand{\N}{\mathbb{N}}
\newcommand{\Z}{\mathbb{Z}}
\newcommand{\Q}{\mathbb{Q}}
\newcommand{\cC}{\mathcal {C}}
\newcommand{\cF}{\mathcal {F}}
\DeclareMathOperator{\id}{\mathrm Id}%identity
\def\Id{{\rm Id}}
\def\id{{\rm Id}}
\newcommand{\norm}[1]{\| #1\|}
\def\cC{          \mathcal C}
\def\cD{          \mathcal D}
\def\cF{          \mathcal F}
\def\clr{   \color{red}}
\def\clb{   \color{black}}
\let\cal\mathcal
\def \R{{\mathbb R}}
\def \Z{{\mathbb Z}}
\def \N{{\mathbb N}}
\def\id{{\rm id}}
\newcommand{\T}{{\mathbb T}}
\newcommand{\prf}{{\begin{proof}}}
\newcommand{\epf}{{\end{proof}}}
\newcommand{\ary}{\begin{eqnarray}}
\newcommand{\eary}{\end{eqnarray}}
\newcommand{\aryst}{\begin{eqnarray*}}
\newcommand{\earyst}{\end{eqnarray*}}
\newcommand{\enmt}{\begin{enumerate}}
\newcommand{\eenmt}{\end{enumerate}}
\theoremstyle{definition}
\def\bee{\begin{equation}}
\def\eee{\end{equation}}
\theoremstyle{rema}
\newcommand{\Diff}{\text{Diff}}
\newcommand{\diff}{\rm Diff}
\numberwithin{equation}{section}
\begin{document}
\title[rigid pseudo-rotations on $\T^2$]{The rigidity of pseudo-rotations on the two-torus and a question of Norton-Sullivan}

%    Information for first author
%----------Author 1
\author{Jian Wang}
\address{Chern Institute of Mathematics, Nankai
University, Tianjin 300071,
P.R.China \newline\indent Max Planck Institute for Mathematics in
the Sciences, Inselstra{\ss}e 22, D-04103 Leipzig,
Germany}\email{wangjian@nankai.edu.cn}
\thanks{Jian Wang acknowledges the support of 
International Postdoctoral Exchange Fellowship Program [20130045] and National Natural Science Foundation of China [11401320].}
%----------Author 2

\author{Zhiyuan Zhang}
\address{Institut de Math\'{e}matique de Jussieu---Paris Rive Gauche, 
75205 Paris Cedex 13, FRANCE}
\email{zzzhangzhiyuan@gmail.com}

%\thanks{$^*$Corresponding author.}

%    General info
\date{June 23, 2017}
\maketitle

\begin{abstract}
We show that under certain boundedness condition, a $C^{r}$  conservative irrational pseudo-rotations on $\mathbb{T}^2$ with a generic rotation vector  is $C^{r-1}$-rigid. We also obtain $C^0$-rigidity for H\"older pseudo-rotations with similar properties. These provide a partial generalisation of the main results in \cite{Br,A}.

We then use these results to study conservative irrational pseudo-rotations on $\T^2$ with a generic rotation vector that is semi-conjugate to a translation via a semi-conjugacy homotopic to the identity. We show that the conservative centralizers of any such diffeomorphism is isomorphic to a uncountable subgroup of $\R^2/\Z^2$.  In connection with a question of Alec Norton and Dennis Sullivan, we describe the topologically linearizable maps within this class using the topology of the conservative centralizer group. In the minimal case, we obtain a precise characterization of topological linearizability for all totally irrational vectors.

We also construct a $C^\infty$ conservative and minimal  totally irrational pseudo-rotation diffeomorphism that is semi-conjugate to a translation, but is topologically nonlinearizable. This gives a negative answer to the question of Norton and Sullivan in the $C^{\infty}$ category.
\end{abstract}

%\tableofcontents
%\addtocontents{toc}{\protect\setcounter{tocdepth}{2}   }

\section{Introduction}
The question of linearization is one of the recurrent themes in dynamical systems, topology and analysis. As one of the earlier results, H. Poincar\'{e} proved the following celebrated classification of circle homeomorphisms:  a circle homeomorphism $f$ is semi-conjugate to an irrational  rigid rotation if and only if  the rotation number of $f$, denoted by $\rho(f)$, is irrational, which is equivalent to say that $f$ has no periodic orbits. 
Later A. Denjoy proved that $f$ is topologically conjugate to an irrational rigid rotation if it is a $C^1$ diffeomorphism of $\T^1$ without periodic points and $Df$ has bounded variation ($f\in C^{1+b.v.}$) \cite{D}. 
The linearization problem for circle diffeomorphisms with higher regularities were studied in great depth by M. Herman and J.-C. Yoccoz \cite{H, Yo95}.
 In the other direction, Denjoy (even before him, P. Bohl \cite{Bo}), provided examples of $C^1$ diffeomorphisms semi-conjugate but not topologically conjugate to an irrational rotation. Their examples were later improved to $C^{1+\alpha}$ for any $\alpha\in(0,1)$  by Herman  \cite{H}. 

It is attempting to generalise Poincar\'e's classification to homeomorphisms on higher dimensional manifolds. However, many new obstructions, such as the existence of mixing smooth diffeomorphisms with no periodic points (for example, \cite{Fayad}), prevented a simple statement as Poincar\'e's in the higher dimensions. 
Moreover, it is also a non-trivial task to extend the concept of rotation number to the study of higher dimensional dynamics. 
 Generalisations in this direction, in different forms, were introduced in \cite{SS, F}. In rough terms, we use \textit{rotation vectors}  to describe the asymptotic motion of orbits in the homology classes.  
Unlike the case of circle homeomorphisms, one usually obtain for more general dynamics a set of rotation vectors, which we call the \textit{rotation set}.
Under a condition called \textit{bounded mean motion}, T. J\"ager \cite{J} obtained an analoguous classification as Poincar\'e's for conservative pseudo-rotations on the 2-torus, that is, conservative homeomorphisms of $\T^2$ with rotation set reduced to a single vector.

 In this paper, we  study homeomorphisms of the two-dimensional torus $\T^2 = \R^2 / \Z^2$ which are isotopic to the identity. In this case, the rotation vectors and the rotation set are defined as follows.

Let $\mathrm{Homeo}_*(\mathbb{T}^2)$ be the group of homeomorphisms of $\mathbb{T}^2$ which are isotopic to $\mathrm{Id}_{\mathbb{T}^2}$. Any $f \in \mathrm{Homeo}_*(\mathbb{T}^2)$ admits a lift to $\R^2$, denoted by $\tilde{f}$, which is a homeomorphism of $\R^2$ satisfying $\pi\tilde{f} = f\pi$, where  $\pi:\R^2\rightarrow \T^2$ is the covering projection, i.e. $\pi(v) = v \mod \Z^2,\, \forall v \in \R^2$. 

For any lift of $f$, denoted by $\tilde{f}$, the \textit{pointwise rotation set} of $\tilde{f}$ is defined by 
\aryst
\rho_p(\tilde{f}) = \left\{ \rho(\tilde{f},z)  \mid   z \in \R^2, \quad \rho(\tilde{f}, z) = \lim_{n \to \infty} (\tilde{f}^{n}(z) - z)/n \mbox{ exists }\right \}. 
\earyst
M. Misiurewicz and K. Ziemian \cite{MZ} introduced the now standard definition of \textit{(Misiurewicz-Ziemian) rotation set}, which admits better properties:
\aryst
\rho(\tilde{f}) = \left\{ {\bf v} \in \R^2  \mid \frac{\tilde{f}^{n_i}(z_i)-z_i}{n_i} \to {\bf v},\mbox{for some } \{z_i\} \mbox{ in }
\mathbb{R}^2,\mbox{ and } \{n_i\} \mbox{ in }\N \mbox{ with }n_i \rightarrow \infty\right \}.
\earyst
The effect of changing the lift $\tilde{f}$ of $f$  is to translate $\rho_{p}(\tilde{f})$, $\rho(\tilde{f})$ by an integer vector.
In \cite{MZ}, the authors
 proved  that the rotation set $\rho(\tilde{f})$ is a compact convex subset of $\R^2$, giving rise to a basic trichotomy: $\rho(\tilde{f})$ is either a compact convex set with nonempty interior, a line segment, or a singleton. 
 
 There are many interesting problems and results  on the relation between the rotation set and the dynamics (see, e.g., \cite{MZ, FM,LM, LT,K16,KPS}). 
It is shown in \cite{LM, F2} that  a torus homeomorphism which is isotropic the identity, and has a rotation set with nonempty interior must have positive topological entropy. On the other hand, when the rotation set has empty interior, one seeks to obtain, to certain degree, a classification. In this direction, we have recent works \cite{J, JP, JT, K16, KPS}. A recent counter example of A. Avila to Franks-Misiurewicz's conjecture reveals some hidden complexity of this problem.
On the  extreme where  $\rho(\tilde{f})$ is a singleton, we say
that $f$ is a \emph{pseudo-rotation}.  In this case, it is clear that $\rho(\tilde{f},z)=\rho(\tilde{f})$ for every $z\in \mathbb{R}^2$, and for the convenience of the discussion, we write the set $\rho(\tilde{f})$ (resp. $\rho(f):=\rho(\tilde{f}) \mod \Z^2$) as a vector $\omega\in\R^2$ (resp. $\overline{\omega} \in\R^2/\Z^2$) instead of  $\{\omega\}$ (resp. $\{\overline{\omega}\}$).  J. Franks \cite{F3} showed that for a conservative pseudo-rotation, $\rho(f)$ is irrational (see Definition \ref{irrational vector}) if and only if $f$ has no periodic points.  We say that a pseudo-rotation $f$ has {\em bounded mean motion} (see Section \ref{sec: BDC and BMM}) if the {\em deviation} from the constant rotation $\tilde{f}^n(z)-z-n\rho(\tilde{f})$
are uniformly bounded in $z\in\R^2$ and $n\in \N$. 
This notion played an important role in the description of the dynamics, see, e.g., \cite{J,J1,LT, K16, SFGP, JS, BJ}. It is direct to see that
\begin{quote}
{\it $f$ is regularly\,\footnote{\,i.e. the semi-conjugacy map is homotopic to the identity (see \cite{J}).} semi-conjugate to an  irrational translation $\implies$ $f$ has bounded mean motion.}
 \end{quote}
 Conversely,
In \cite{J}, the author has shown that for an area-preserving totally irrational pseudo-rotation $f$ on $\T^2$,
\begin{quote}
{\it $f$ has bounded mean motion $\implies$ $f$ is semi-conjugate to an  irrational translation.}
\end{quote}

\smallskip

One of the purpose of this paper is to show that pseudo-rotations with bounded mean motion behave remarkably similar to that of rigid translations under iterations, in the following sense.

We say that a  $C^r$-diffeomorphism $f$ of a smooth manifold $M$ is {\em $C^k$-rigid}, where $0 \leq k\leq r \leq \infty$, if there exists a sequence $\{n_j\}_{j\geq0}$ in $\mathbb{N}$  such that $f^{n_j}\rightarrow \mathrm{Id}_{M}$ in the $C^k$-topology. When $M$ is the circle, the two-disc, or the two-torus, it is obvious that  $f$ is $C^0$-rigid if $f$ is topologically conjugate to a rigid rotation/translation. 

On any compact manifold $M$, a $C^0$-rigid (resp. $C^r$-rigid) non-periodic homeomorphism (resp.  $C^r$ diffeomorphism) has uncountably many commutators in ${\rm Homeo}(M)$ (resp. $\diff^{r}(M)$) (see \cite[Chap XII, (3.2)]{H}). Thus by a result of C. Bonatti, S. Crovisier and A. Wilkinson \cite{BCW}, the set of $C^1$-rigid diffeomorphisms is meagre in $\diff^{1}(M)$ for any compact manifold $M$. Their result, along with \cite{BCVW}, answered a question of S. Smale \cite{Smale} in the $C^1$-topology. On the other hand, rigid diffeomorphisms can be quite common under additional assumptions, for example, among pseudo-rotations. In this direction, we have Birkhoff's sphere conjecture (see Section \ref{subsop} for its statement), and the following recent results.

On the 2-disc $\mathbb{D}$, a pseudo-rotation is an area-preserving homeomorphism of $\mathbb{D}$ that fixes the origin, and has no other periodic point. Given  a pseudo-rotation $f$ on $\mathbb{D}$, there exists a lift $\tilde{f}$ of $f|_{\mathbb{D} \setminus \{(0,0)\}}$ to $\tilde{\mathbb{A}}=\mathbb{R}\times\,(0,1]$.  The rotation number $\rho(f)$ is defined as $\rho(\tilde{f}) \mod 1$ where $\rho(\tilde{f}) = \lim_{n \to \infty}(p_1(\tilde{f}^n(z)) - p_1(z))/n$. Here $p_1$ is projection of $\tilde{\mathbb{A}}$ to the $\R$-coordinate, and for a pseudo-rotation $f$, the limit is independent of $z \in \tilde{\mathbb{A}}$.

B. Bramham  \cite{Br} has shown that smooth 
pseudo-rotations of the disc with sufficiently Liouvillean rotation number are
$C^0$-rigid. 
 Bramham's proof uses the pseudo-holomorphic curve techniques from symplectic geometry. In 2015, Avila, B. Fayad, P. Le
Calvez, D. Xu and Z. Zhang  proved in \cite{A}  that a
pseudo-rotation of the disc of class $C^r ~(r\geq 2$)
is $C^{r-1}$-rigid if its rotation number is non-Brjuno (see Section \ref{sec pre} for the definition), which generalises the result of Bramham.\smallskip

 In this paper, we generalise some results of \cite{Br,A} to $\T^2$ under certain boundedness condition.  To properly state our result in its most general form, we introduce the following definitions. 

\begin{defn}\label{irrational vector}
For any integer $k \geq 2$, a vector $\omega\in \mathbb{R}^k$ is called  \emph{irrational} if $\omega\not\in \mathbb{Q}^k$. A vector $\omega=(\omega_1,\cdots,\omega_k)\in\mathbb{R}^k$ is called \emph{totally irrational} if $\omega_1,\cdots,\omega_k$ and 1 are rationally independent, i.e., the solution to the equation $\sum_{i=1}^kd_i\omega_i+e=0$ for $(d_1,\cdots,d_k,e)\in\mathbb{Z}^{k+1}$ contains  only $(0,\cdots,0)$. We say that a vector $\omega \in \R^2$ is {\em semi-irrational} if it is irrational but not totally irrational. We say that $\overline{\omega} \in \R^k/\Z^k$ is irrational (resp. totally irrational, semi-irrational for $k=2$) if there exists $\omega \in \R^k$ with $\omega \mod \Z^k= \overline{\omega}$, such that $\omega$ is irrational (resp. totally irrational, semi-irrational for $k=2$).
\end{defn}
We note that, for any semi-irrational $\omega=(\omega_1,\omega_2) \in \R^2$, there exists $(c,d)\in \Z^2\setminus\{(0,0)\}$ such that $c\omega_1+d\omega_2\in \Q$ with $gcd(c,d) =1$. We will see in Section \ref{sec:semi-irrational} that the set  $\{\pm(c,d)\}$ is uniquely determined by $\pi(\omega)$. We say that $\pm(d,-c)$ are the \emph{character vectors of $\omega$}.

 For any vector $\omega\in \mathbb{R}^2$, we set $\|\omega\|_{\T^2}=\min_{z\in \mathbb{Z}^2}\|\omega-z\|$.  Our first result states as follows.

\begin{thm}\label{sup}
Suppose that $f$ is an area-preserving pseudo-rotation of the torus that is H\"{o}lder with exponent $a\in(0,1]$ and $\rho(f)= \omega \mod \mathbb{Z}^2$ is irrational satisfying the following strong super-Liouvillean condition:
\begin{equation}\label{eq:a-super-liouvullen}\liminf_{n\rightarrow+\infty} n^{-1}a^{n}\ln \norm{n\omega}_{\T^2} = -\infty,\end{equation}
and one of the following conditions:
\begin{enumerate}
\item $f$ has bounded mean motion; 
\item $\omega$ is semi-irrational and $f^{\ell}$ satisfies  the bounded deviation parallel to a character vector of $\omega$ for some integer $\ell \geq 1$,
\end{enumerate}
then $f$ is $C^{0}$-rigid, i.e. $\liminf_{n \to \infty} d_{C^0}(f^{n},  \mathrm{Id}_{\mathbb{T}^2}) = 0$. 
%for any   $\limsup_{j\rightarrow+\infty} q_{n_j}^{-1}a^{q_{n_j}}\ln d_{n_j}=+\infty.$
\end{thm}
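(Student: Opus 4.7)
The plan is to produce a sequence $\{n_j\}$ along which $f^{n_j} \to \mathrm{Id}_{\T^2}$ in $C^0$. First I would use the super-Liouvillean hypothesis \eqref{eq:a-super-liouvullen} to extract $n_j \to \infty$ with $a^{n_j}\ln\|n_j\omega\|_{\T^2} \leq -c_j n_j$ for some $c_j \to \infty$. Equivalently, $\epsilon_j := \|n_j\omega\|_{\T^2}$ satisfies $\epsilon_j \leq e^{-c_j n_j a^{-n_j}}$. Fix $k_j \in \Z^2$ closest to $n_j\omega$ and set $\alpha_j := n_j\omega - k_j$. An elementary induction on the H\"older inequality for $f$ shows that $f^n$ is $a^n$-H\"older with a uniform constant $K' := K^{1/(1-a)}$, where $K$ is the H\"older constant of $f$. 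The single numerical estimate that drives the whole argument is then
\[ |\tilde f^{n_j}(\tilde z + \alpha_j) - \tilde f^{n_j}(\tilde z)| \leq K'\epsilon_j^{a^{n_j}} \leq K' e^{-c_j n_j}, \]
which exhibits $\tilde f^{n_j}$ as being $\alpha_j$-translation-invariant up to super-exponentially small error.

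Under condition (1) bounded mean motion yields $M>0$ with $|\tilde f^n(\tilde z)-\tilde z-n\omega|\leq M$ for all $\tilde z,n$. By J\"ager's semi-conjugacy theorem there exists $h \in \mathrm{Homeo}_*(\T^2)$ with $h\circ f = R_\omega\circ h$; letting $\phi := \tilde h - \mathrm{Id}_{\R^2}$, the cocycle $\psi_n(\tilde z) := \tilde f^n(\tilde z)-\tilde z-n\omega$ admits the coboundary representation
\[ \psi_n(\tilde z) = \phi(\tilde z) - \phi(\tilde f^n(\tilde z)), \]
with $\phi$ bounded and $\Z^2$-periodic. Area preservation forces $\int_{\T^2}\psi_n = 0$, so $\psi_{n_j}$ vanishes at some point $\tilde z_0$, giving $|\tilde f^{n_j}(\tilde z_0)-\tilde z_0-k_j|=\epsilon_j$: a single seed point from which to propagate.

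The main step, and the core obstacle, is to promote smallness of
\[ G_j(\tilde z) := \tilde f^{n_j}(\tilde z) - \tilde z - k_j \]
at $\tilde z_0$ to a uniform bound on $\R^2$. I would combine three ingredients: (a) \emph{orbit propagation}: the cocycle identity together with the commutation of $\tilde f^k$ with integer translations yields $|G_j(\tilde f^k(\tilde z_0))| \leq K'\epsilon_j^{a^k}$, small for $k$ up to order $n_j$; (b) \emph{local H\"older spreading}: $|G_j(\tilde w)-G_j(\tilde u)| \leq K'|\tilde w-\tilde u|^{a^{n_j}} + |\tilde w-\tilde u|$, so smallness at a controlled point $\tilde u$ extends to a ball of radius $\delta_j = e^{-C a^{-n_j}}$; and (c) the \emph{implicit identity}
\[ G_j(\tilde z)-\alpha_j = \phi(\tilde z)-\phi(\tilde z + G_j(\tilde z)), \]
derived from the coboundary representation and the $\Z^2$-periodicity of $\phi$, which together with the a priori bound $\|G_j\|_\infty \leq M+1$ allows a fixed-point-type bootstrap turning local into global smallness. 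The delicate matching of the scales $\epsilon_j$, $\delta_j$, and the orbit-discrepancy rate of $R_\omega$ is the place where the precise form $n^{-1}a^n\ln\|n\omega\|_{\T^2}\to -\infty$, as opposed to the weaker $a^n\ln\|n\omega\|_{\T^2}\to -\infty$, becomes essential; without the extra $n^{-1}$ factor the three scales cannot be matched.

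For condition (2), $\omega=(\omega_1,\omega_2)$ is semi-irrational with character vector $\pm(d,-c)$ and $c\omega_1+d\omega_2\in\Q$. After replacing $f$ by $f^\ell$, the bounded deviation hypothesis supplies control only along the character direction, while along the transverse direction $\omega$ projects to a genuinely irrational rotation. I would then rerun the H\"older/super-Liouvillean/propagation argument in that transverse direction, replacing J\"ager's theorem by a one-dimensional analogue semi-conjugating to a circle rotation rather than a torus translation, with the sequence $n_j$ chosen from the projected rotation number. The $\alpha$-translation-invariance estimate above carries over verbatim, and the fixed-point bootstrap then concludes $C^0$-rigidity in the same fashion.
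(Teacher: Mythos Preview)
Your approach diverges substantially from the paper's, and it contains gaps that I do not see how to close.

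\textbf{The seed-point step is unjustified.} You argue that $\int_{\T^2}\psi_{n_j}=0$ forces $\psi_{n_j}$ to vanish somewhere. But $\psi_{n_j}$ is a continuous map $\T^2\to\R^2$, and mean zero does not imply the existence of a zero: take for instance $(x,y)\mapsto(\cos 2\pi x,\sin 2\pi x)$. Without a seed point the entire propagation scheme has nothing to start from.

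\textbf{The bootstrap via the semi-conjugacy does not close.} Even granting a seed, your steps (a)--(c) do not combine into a proof. Orbit propagation gives smallness of $G_j$ at roughly $n_j$ points (modulo $\Z^2$), while H\"older spreading extends this only to balls of radius $\delta_j\sim e^{-Ca^{-n_j}}$; these together cannot cover $\T^2$. You then invoke the implicit identity $G_j(\tilde z)-\alpha_j=\phi(\tilde z)-\phi(\tilde z+G_j(\tilde z))$ and a ``fixed-point-type bootstrap'', but $\phi=\tilde h-\mathrm{Id}$ is only continuous: J\"ager's semi-conjugacy $h$ carries no quantitative modulus of continuity, so the right-hand side gives no useful smallness from bounded $G_j$. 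There is no contraction to iterate. Also note that J\"ager's theorem requires $\omega$ to be \emph{totally} irrational, whereas condition~(1) allows semi-irrational $\omega$, so your very first reduction is not available in general.

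\textbf{What the paper does instead.} The paper never touches the semi-conjugacy. Its engine is a displacement estimate (Proposition~\ref{lem:displaced disk b in bdc}): for an area-preserving pseudo-rotation satisfying the bounded deviation condition, any simple disc of area exceeding $c(\kappa,F)\|\omega\|$ must meet its image. This is proved via a first-return argument and Franks' periodic-free-disk-chain lemma. Its corollary gives, for \emph{every} iterate $f^{n}$,
\[
d_{C^0}(f^{n},\mathrm{Id}_{\T^2})\ \le\ c(\kappa)^{1/2}\|n\omega\|_{\T^2}^{1/2}\;+\;\max_{z}\mathrm{diam}\bigl(f^{n}(B(z,c(\kappa)^{1/2}\|n\omega\|_{\T^2}^{1/2}))\bigr).
\]
Now the H\"older estimate $\|f^{n}(x)-f^{n}(y)\|\le C^{n}\|x-y\|^{a^{n}}$ bounds the second term by $C^{n_j}\bigl(2c(\kappa)^{1/2}\|n_j\omega\|_{\T^2}^{1/2}\bigr)^{a^{n_j}}$, and the arithmetic hypothesis \eqref{eq:a-super-liouvullen} furnishes a subsequence along which $C^{n_j}\|n_j\omega\|_{\T^2}^{a^{n_j}/4}\to 0$, finishing condition~(1) directly. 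Condition~(2) is reduced to this case by the normalisation Lemma~\ref{lem:Abdc}. The missing idea in your proposal is precisely this displaced-disc bound, which replaces the global propagation you attempt with a single uniform inequality.
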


The condition (2) means that there exists a lift $\tilde{g}$ of $f^{\ell}$ such that the projection of the deviation $\tilde{g}^n(z)-z-n\rho(\tilde{g})$ to the orthogonal direction of the character vector of $\omega$ is uniformly bounded  for every $z\in\R^2$ and $n\in\N$. We defer to Section \ref{sec: BDC and BMM} for more details. When $\omega$ is a semi-irrational vector, we note that condition (2)  is  weaker than (1).

We also have the following result on the rigidity in higher topology, which is analogous to \cite[Theorem 1]{A}. We prove that for $r \geq 2$, $f$ is $C^{r-1}$-rigid if $f$ is a $C^r$ area-preserving  pseudo-rotation of $\mathbb{T}^2$ with super-Liouvillean rotation vector, satisfying the same boundedness condition as in Theorem \ref{sup}.  

\begin{thm}\label{nonBrjunont}
If $f$ is a $C^r$, $r\in \N_{\geq 2}$ (resp. $r=\infty$) area-preserving pseudo-rotation of $\mathbb{T}^2$ with rotation number $\rho(f)=\omega\mod \mathbb{Z}^2$. If one of the following cases is satisfied
\begin{enumerate}
\item $\omega$ is an irrational vector satisfying the following super-Liouvillean condition
\begin{equation}\label{eq:super-liouvullen}\liminf_{n\rightarrow+\infty} n^{-1}\ln \norm{n\omega}_{\T^2} = -\infty,\end{equation}

and $f$ has bounded mean motion; 
\item $\omega$ is an semi-irrational vector of strong non-Brjuno type, and $f^{\ell}$ satisfies  the bounded deviation parallel to the character vector of $\omega$ for some integer $\ell \geq 1$,
\end{enumerate}
then $f$ is $C^{r-1}$-rigid (resp. $C^{\infty}$-rigid).
\end{thm}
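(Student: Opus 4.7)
My plan is to combine a quantitative refinement of Theorem \ref{sup} with the standard exponential bound on the $C^r$-norms of iterates and a Hadamard-type convexity inequality. Since $f\in C^r$ with $r\geq 2$ it is globally Lipschitz, so one may take $a=1$ in the hypothesis of Theorem \ref{sup}, whereupon (\ref{eq:a-super-liouvullen}) reduces exactly to (\ref{eq:super-liouvullen}). Reopening the proof of Theorem \ref{sup}, I would aim to upgrade the qualitative $C^0$-rigidity to a polynomial quantitative version: a subsequence $\{n_j\}\subset \N$ and integer vectors $\{m_j\}\subset \Z^2$ such that
$$\sup_{z\in\R^2}\|\tilde f^{n_j}(z) - z - m_j\|\;\leq\; C_0\,\|n_j\omega\|_{\T^2}^{\gamma}$$
for some $C_0,\gamma>0$ depending only on $f$; equivalently, $\|f^{n_j}-\mathrm{Id}\|_{C^0(\T^2)}\leq C_0\|n_j\omega\|_{\T^2}^\gamma$.

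Next I would invoke the chain rule to produce $K=K(\|f\|_{C^r})>0$ with $\|f^n\|_{C^r}\leq K^n$, hence $\|f^n-\mathrm{Id}\|_{C^r}\leq 2K^n$, for every $n\geq 0$. By the convexity (Kolmogorov--Landau) inequality, for $0<k<r$ there is a constant $C_{r,k}$ with $\|g\|_{C^k}\leq C_{r,k}\|g\|_{C^0}^{1-k/r}\|g\|_{C^r}^{k/r}$; applied to $g=f^{n_j}-\mathrm{Id}$ at $k=r-1$ this gives
$$\|f^{n_j}-\mathrm{Id}\|_{C^{r-1}}\;\leq\; C_1\,\|n_j\omega\|_{\T^2}^{\gamma/r}\,K^{n_j(r-1)/r}.$$
The super-Liouvillean condition (\ref{eq:super-liouvullen}) furnishes, along a further subsequence, $\|n_j\omega\|_{\T^2}\leq \exp(-c_j n_j)$ with $c_j\to+\infty$, and the right-hand side is then bounded by $C_1\exp\bigl(n_j(-\gamma c_j/r+(r-1)(\log K)/r)\bigr)\to 0$. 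This establishes $C^{r-1}$-rigidity; the case $r=\infty$ follows by running the argument for every finite $r$.

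For case (2) one performs the parallel argument after replacing $f$ by $f^\ell$, using the bounded deviation parallel to the character vector in place of full bounded mean motion and appealing to the corresponding quantitative analogue of Theorem \ref{sup}(2); here the strong non-Brjuno hypothesis on $\omega$ plays the role of (\ref{eq:super-liouvullen}). The principal obstacle is the first step: one must extract from the proof of Theorem \ref{sup} an explicit polynomial exponent $\gamma>0$, rather than bare qualitative $C^0$-convergence. I expect this to be achievable because Lipschitz regularity (the case $a=1$) replaces the factor $a^n$ appearing in the Hölder version of the displacement estimate by $1$, producing a genuine power-law dependence on $\|n\omega\|_{\T^2}$ which, combined with area preservation and the (directional) bounded mean motion structure, yields the desired uniform bound.
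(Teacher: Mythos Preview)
Your quantitative $C^0$ bound in the first step is not right as stated. Corollary~\ref{cor} applied to $f^{n_j}$ gives
\[
d_{C^0}(f^{n_j},\Id_{\T^2})\;\leq\; c(\kappa)^{1/2}\|n_j\omega\|_{\T^2}^{1/2}\Bigl(1+2\|Df^{n_j}\|_\infty\Bigr),
\]
so the estimate always carries a factor $\|Df^{n_j}\|$; setting $a=1$ in Theorem~\ref{sup} makes the \emph{exponent} of $\|n_j\omega\|$ constant (namely $1/2$), but the prefactor $C^{n_j}$ in \eqref{termrigiditysuperliouv} does not disappear. There is no bound of the form $C_0\|n_j\omega\|^{\gamma}$ with $C_0$ independent of $j$.

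For case~(1) this is harmless: with the naive chain-rule bound $\|Df^{n_j}\|\leq L^{n_j}$ and $\|f^{n_j}\|_{C^r}\leq K^{n_j}$, the convexity inequality yields a $C^{r-1}$ distance bounded by $\exp\bigl(n_j[(\ln L-\tfrac{1}{2}c_j)/r+(r-1)(\ln K)/r]\bigr)$, and since super-Liouvillean gives $c_j\to\infty$ this does tend to zero. So your argument for~(1) is essentially correct (and in fact more elementary than the paper's, which invokes the growth-gap theorem even here).

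For case~(2), however, your approach breaks down. The strong non-Brjuno condition is much weaker than \eqref{eq:super-liouvullen}: after the $\mathrm{SL}(2,\Z)$ reduction (Lemma~\ref{lem:Abdc}), the arithmetic input is Lemma~\ref{lem}, which only produces a subsequence with $\|q_{n_j}\beta\|_{\T}<e^{-q_{n_j}/j^2}$, i.e.\ $c_j\sim j^{-2}\to 0$. A naive bound $\|Df^{n_j}\|\leq L^{n_j}$ with $L>1$ then makes even the $C^0$ estimate $L^{n_j}e^{-n_j/(2j^2)}$ diverge, and the $C^{r-1}$ bound is worse still. The missing ingredient is the growth-gap theorem (Theorem~\ref{h}, i.e.\ \cite[Theorem~C]{A}): since an irrational pseudo-rotation has no periodic points, one obtains the subexponential estimate $\|Df^{q_{n_j}}\|\leq e^{\theta^{j} q_{n_j}}$ along a suitable sparse subsequence ($q_{n_{j+1}}\geq H^{q_{n_j}}$), and hence $\|D^r f^{q_{n_j}}\|\leq e^{C_r q_{n_j}/j^3}$. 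Only with this improvement does $e^{-q_{n_j}/j^2}$ dominate and the convexity argument close. Your proposal has no substitute for this step.
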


The notion of strong non-Brjuno type will be given in Definition \ref{semi-irrational} after some preparations in Section \ref{sec:semi-irrational}. In loose terms, we can always normalise a semi-irrational vector $\omega$ to obtain a single irrational number which we call the \textit{character frequency} of $\omega$, and in Theorem \ref{nonBrjunont} (2), we study those $\omega$ with character frequency which is non-Brjuno in the classical sense. When $\omega$ is semi-irrational, the strong non-Brjuno condition is much weaker than the super-Liouvillean condition \eqref{eq:super-liouvullen}.

\begin{comment}
\begin{rem}\label{rem:necessary}
The conservative hypotheses in Theorem \ref{sup} and Theorem \ref{nonBrjunont} are necessary. To see this, it suffices to consider a diffeomorphism $h \in \diff^{\omega}(\T)$ with a contracting fixed point, and consider $f =T_{\alpha} \times h$ with a super-Liouvillean $\alpha \in \R \setminus \Q$. We see that $f$ satisfies \eqref{eq:a-super-liouvullen} and Theorem \ref{sup}(1), but $f$ is not $C^0$-rigid.

\clr
The H\"older regularity condition in Theorem \ref{sup} cannot be replaced by $C^0$. In fact,  Croviser \cite{C} constructed $C^0$ pseudo-rotations of arbitrary irrational rotation number on the closed annulus $\mathbb{A}=\T\times [0,1]$ which are not $C^0$-rigid (for another construction, see \cite[Section 2.4]{A}).  Now  let  $f$ be a $C^0$ pseudo-rotation with super-Liouvillean irrational rotation number $\alpha$ on $\mathbb{A}$ which is not $C^0$ rigid, and let $f'$ be the mirror map of  $f$ which defines on the mirror annulus $\mathbb{A}'$ of $\mathbb{A}$.  Pasting the two annuli along their respective mirror boundaries to form a torus, we thus obtain a new torus map $g=f\sharp f'$ which satisfies that  $\rho(g)=(\alpha,0)$ and (2) of Theorem \ref{sup}. Obviously, $g$ is not rigid.
\end{rem}
\end{comment}

Our proofs of Theorem \ref{sup} and  \ref{nonBrjunont} are, to a large extent, based on the strategy in \cite{A}.
The main tool in our proof is Proposition \ref{lem:displaced disk b in bdc} which is a generalisation of  \cite[Lemma 3.1]{A} to $\T^2$. It gives a fine control of the $C^0$ displacement of a pseudo-rotation on $\T^2$ with certain boundedness condition using the modulus of the rotation vector. This result also plays a pivotal role in the proof of Theorem \ref{cortoplinear2} below. 
In Section \ref{subsop}, we introduce some further perspectives of  this proposition. In particular, we hope it would be useful in producing progress toward Question \ref{queakontorus}.
\begin{rem}
By a standard argument (see, e.g., \cite[Appendix A.2]{Br}), we can show that the set of totally irrational, super-Liouvillean vectors   is $G_{\delta}$ dense in $\mathbb{R}^2$, that is, topologically generic.  
\end{rem}
Our next result is motivated by a line of research on the extension of the Denjoy's type example on the circle to $\T^2$. 
One motivating question is the \textit{wandering domains problem} (see \cite{NS}): 
Can one ``\,blow up\,'' one or more orbits of $T_{\alpha}$ to make a smooth diffeomorphism with wandering domains?  
We say that a homeomorphism of $\T^2$ obtained by blowing-up finitely many orbits of an irrational translation is of Denjoy type.
By the classical KAM theory,  any $C^{\infty}$ volume-preserving pseudo-rotation of $\T^n$ with Diophantine rotation vector $\alpha \in \T^n$, which is sufficiently close to $T_{\alpha}$, is smoothly conjugate to $T_{\alpha}$. 
P. McSwiggen in \cite{McS} constructed a $C^{2+\alpha}$ diffeomorphism of Denjoy type having a smooth wandering domain. In particular, his example is not topologically conjugate to a rigid translation.
A. Norton and D. Sullivan
 in \cite{NS} showed that no $C^3$ diffeomorphism on $\T^2$ of Denjoy type exists with circular wandering domains, and asked the following question:
\begin{que}[Norton and Sullivan, 1996]\label{qnortonsullivan} 
If $f:\mathbb{T}^2\rightarrow\mathbb{T}^2$ is a diffeomorphism, $h:\T^2\rightarrow\T^2$ is a continuous map homotopic to the identity, and $h f=T_{\rho} h$ where $\rho\in\R^2$ is a totally irrational vector, are there natural geometric conditions (e.g. smoothness) on $f$ that force $h$ to be a homeomorphism?
\end{que}
\noindent In \cite{PS}, A. Passeggi and M. Sambarino  also mentioned the question that whether there exists $r$ so that if $f : \T^2 \to \T^2$ is a $C^r$ diffeomorphism semi-conjugate to an ergodic translation then $f$ is conjugate to it.  For more recent developments, we mention \cite{Kwa, KM, Kara, Nav}. For a survey  on related problems, see  S. van Strien \cite{vS}.

In the next result, we formalise a natural geometric condition, using the concept of centralizers, which implies topological linearizability.
For any $\overline{\omega} \in \R^2/\Z^2$, we denote by $\cal{C}_{\overline{\omega}}$ the set of maps in ${\rm Homeo}_{*}(\T^2)$ which are regularly semi-conjugate to the translation $T_{\overline{\omega}}$, i.e. $f \in \cal{C}_{\overline{\omega}} \iff$ there exists a surjective continuous map $h : \T^2 \to \T^2$ homotopic to ${\rm Id_{\T^2}}$ such that $hf = T_{\overline{\omega}}h$. A homeomorphism $g\in {\rm Homeo}(\T^2)$ is called a centralizer of $f$ if $gf=fg$.

\begin{thm}\label{cortoplinear2}
Let $r \in \N_{\geq 2}$ or $r=\infty$, and let $f$ be a  map in $\diff^r(\T^{2}, {\rm Vol}) \cap \cal{C}_{\overline{\omega}}$ with $\overline{\omega}$ super-Liouvillean \eqref{eq:super-liouvullen} totally irrational, and let $G_{f}$ be the set of centralizers of $f$ in $\diff^{r-1}(\T^2, {\rm Vol}) \cap \mathrm{Homeo}_*(\mathbb{T}^2)$. Then $G_{f}$ is isomorphic to a uncountable subgroup of $\R^2/\Z^2$. Moreover, we have the following implications:
\begin{quote}
$G_{f}$ is compact in the $C^0$ topology$\implies$ $f$ is topologically linearizable$\implies$$G_{f}$ is pre-compact in the $C^0$ topology.
\end{quote}
\end{thm}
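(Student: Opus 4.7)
The plan is to exploit the $C^{r-1}$-rigidity provided by Theorem~\ref{nonBrjunont}(1) --- which applies because any $f\in\cal{C}_{\overline{\omega}}$ has bounded mean motion --- together with the quantitative displacement estimate Proposition~\ref{lem:displaced disk b in bdc}, in order to realize $G_f$ as an uncountable subgroup of $\T^2$ via a rotation-vector homomorphism, and then to relate compactness of $G_f$ to topological linearizability. The argument splits into three stages: (i)~construct, for every $\overline{\alpha}\in\T^2$, a centralizer $g_{\overline{\alpha}}\in G_f$ with rotation vector $\overline{\alpha}$; (ii)~define an injective group homomorphism $\Phi:G_f\to\T^2$ whose image is uncountable; (iii)~establish the two implications relating $C^0$-compactness of $G_f$ with topological linearizability.

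For (i), I would fix $\overline{\alpha}\in\T^2$ and choose integers $n_j\to\infty$ with $n_j\overline{\omega}\to\overline{\alpha}$ in $\T^2$, possible by total irrationality of $\overline{\omega}$. Since $(n_j-n_k)\overline{\omega}\to 0$ in $\T^2$, the $C^{r-1}$-refinement of Proposition~\ref{lem:displaced disk b in bdc} that underlies the proof of Theorem~\ref{nonBrjunont} yields $\|f^{n_j-n_k}-\mathrm{Id}\|_{C^{r-1}}\to 0$. Writing $f^{n_j}=f^{n_j-n_k}\circ f^{n_k}$ and using the uniform $C^{r-1}$-bounds on the relevant iterates from that same proof, the sequence $\{f^{n_j}\}$ is $C^{r-1}$-Cauchy; its limit $g_{\overline{\alpha}}$ automatically belongs to $\diff^{r-1}(\T^2,\mathrm{Vol})\cap\mathrm{Homeo}_*(\T^2)$, commutes with $f$, and satisfies $h\circ g_{\overline{\alpha}}=T_{\overline{\alpha}}\circ h$ by passing to the limit in $h\circ f^{n_j}=T_{n_j\overline{\omega}}\circ h$. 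This produces an uncountable family $\{g_{\overline{\alpha}}\}_{\overline{\alpha}\in\T^2}\subset G_f$.

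For (ii), given $g\in G_f$ pick lifts $\tilde h,\tilde g,\tilde f$ so that $\tilde h\tilde f=T_\omega\tilde h$, and set $D:=\tilde h\tilde g-\tilde h$. Then $D$ is $\Z^2$-periodic and uniformly bounded (because both $\tilde h-\mathrm{Id}$ and $\tilde g-\mathrm{Id}$ are bounded, the latter since $g\in\mathrm{Homeo}_*(\T^2)$), and one computes $D\circ\tilde f=D+k_{gf}$, where $k_{gf}\in\Z^2$ measures the failure of $\tilde g$ and $\tilde f$ to commute exactly at the lift level; boundedness of $D$ forces $k_{gf}=0$, so $D$ descends to an $f$-invariant continuous map $\T^2\to\R^2$. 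By the unique ergodicity of $f$ with respect to Lebesgue measure --- which follows from $h_{*}\mathrm{Vol}=\mathrm{Vol}$ and the unique ergodicity of the totally irrational translation $T_{\overline{\omega}}$ --- every continuous $f$-invariant function is constant, so $D\equiv c$ for some $c\in\R^2$, and $\Phi(g):=\overline{c}\in\T^2$ defines a group homomorphism with $\Phi(g_{\overline{\alpha}})=\overline{\alpha}$. For injectivity I would prove that the set $E:=\{z\in\T^2:h^{-1}(h(z))=\{z\}\}$ is dense: its complement is an $f$-invariant Borel set whose $h$-image has Lebesgue measure zero by a disintegration/fibre-measure argument combined with $h_{*}\mathrm{Vol}=\mathrm{Vol}$, Poincar\'e recurrence, and ergodicity of $f$, so $E$ has full Lebesgue measure and hence is dense. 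For $g\in\ker\Phi$, the relation $hg=h$ forces $g(z)\in h^{-1}(h(z))=\{z\}$ for every $z\in E$; continuity then gives $g=\mathrm{Id}$, so $\Phi$ is injective and $G_f\cong\Phi(G_f)$ is an uncountable subgroup of $\T^2$.

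For (iii), if $f=\psi T_{\overline{\omega}}\psi^{-1}$ for some $\psi\in\mathrm{Homeo}(\T^2)$, then by minimality of $T_{\overline{\omega}}$ its centralizer in $\mathrm{Homeo}(\T^2)$ is the translation group $\{T_{\overline{\beta}}:\overline{\beta}\in\T^2\}$, so the centralizer of $f$ in $\mathrm{Homeo}(\T^2)$ is the $C^0$-compact $\psi$-conjugate $\{\psi T_{\overline{\beta}}\psi^{-1}\}_{\overline{\beta}\in\T^2}$; hence $G_f$ is pre-compact. Conversely, if $G_f$ is $C^0$-compact then $\Phi:G_f\to\T^2$ is a continuous bijection between compact Hausdorff groups, hence a topological group isomorphism onto a compact subgroup of $\T^2$ that contains the dense set $\{\overline{\alpha}:g_{\overline{\alpha}}\in G_f\}$ and hence equals $\T^2$. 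Consequently $\overline{\alpha}\mapsto g_{\overline{\alpha}}$ is $C^0$-continuous. Fixing a base point $z_0\in\T^2$, the formula $\psi(\overline{\alpha}):=g_{\overline{\alpha}}(z_0)$ defines a continuous map satisfying $f\psi=g_{\overline{\omega}}\psi=\psi T_{\overline{\omega}}$ via the group law $g_{\overline{\alpha}+\overline{\beta}}=g_{\overline{\alpha}}g_{\overline{\beta}}$, and $\psi$ is injective because for $\overline{\gamma}\neq 0$ the element $g_{\overline{\gamma}}\in G_f$ has nonzero rotation vector and therefore no fixed points, by a lift-level argument. Hence $\psi\in\mathrm{Homeo}(\T^2)$ conjugates $T_{\overline{\omega}}$ to $f$. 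I expect the most delicate step to be the density of the injectivity set $E$ in part~(ii), which requires a careful fibre analysis of the semi-conjugacy $h$ for bounded-mean-motion pseudo-rotations with totally irrational rotation vector.
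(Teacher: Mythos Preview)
Your proposal has a genuine gap in stage~(i), and your injectivity argument in stage~(ii) diverges from the paper's in a way that leaves a delicate point unjustified.

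\textbf{The gap in (i).} You claim that for any sequence $n_j$ with $n_j\overline{\omega}\to\overline{\alpha}$, the iterates $\{f^{n_j}\}$ form a $C^{r-1}$-Cauchy sequence, because $(n_j-n_k)\overline{\omega}\to 0$ and ``the $C^{r-1}$-refinement of Proposition~\ref{lem:displaced disk b in bdc}'' gives $f^{n_j-n_k}\to\mathrm{Id}$. But neither Corollary~\ref{cor} nor Theorem~\ref{nonBrjunont} gives this. Corollary~\ref{cor} bounds $d_{C^0}(f^m,\mathrm{Id})$ by $c(\kappa)^{1/2}\|m\omega\|_{\T^2}^{1/2}+\max_z\mathrm{diam}\bigl(f^m(B(z,\cdot))\bigr)$; the second term requires a bound on $\|Df^m\|$, and Theorem~\ref{h} supplies such a bound only along a very special super-exponentially spaced subsequence $\{q_{n_j}\}$, not along an arbitrary sequence chosen so that $m\omega$ is small in $\T^2$. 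So the Cauchy property fails in general, and you cannot construct $g_{\overline{\alpha}}$ for every $\overline{\alpha}$. The paper never attempts this: uncountability of $G_f$ is obtained by citing Herman \cite[Chap~XII,~(3.2)]{H} (any $C^{r-1}$-rigid non-periodic map has uncountably many $C^{r-1}$ centralizers), and surjectivity of the rotation-vector map is only established \emph{after} assuming $G_f$ compact, via the observation that $\phi_1(G_f)\supset\{n\overline{\omega}\}_{n\in\Z}$ is dense and $\phi_1(G_f)$ is compact.

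\textbf{The injectivity argument.} Your proof that $\ker\Phi=\{\mathrm{Id}\}$ relies on the density of the injectivity set $E=\{z:h^{-1}(h(z))=\{z\}\}$, which you acknowledge as the most delicate step and justify only heuristically. The paper proceeds quite differently and avoids any fibre analysis of $h$. The key step, Lemma~\ref{gfisuniformlybounded}, shows that \emph{every} $g\in G_f$ lies in $\cal{C}_{2\kappa,\rho(g)}$ for the same $\kappa$ controlling $f$: one compares $\tilde g^n(\tilde z)-\tilde z$ at two points via the commutation relation with $\tilde f$ and the transitivity of $f$. With this in hand, Lemma~\ref{trivialkernel} proves injectivity directly: if $\rho(g)=0$, pick the rigidity sequence $n_j$ from Theorem~\ref{nonBrjunont}; then $gf^{n_j}\in G_f\subset\cal{C}_{2\kappa,\cdot}$, has uniformly bounded $C^{r-1}$ norm (since $f^{n_j}\to\mathrm{Id}$ in $C^{r-1}$), and $\rho(gf^{n_j})=\rho(f^{n_j})\to 0$, so Corollary~\ref{cor} applied to $gf^{n_j}$ forces $d_{C^0}(gf^{n_j},\mathrm{Id})\to 0$, whence $g=\mathrm{Id}$. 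This is where Proposition~\ref{lem:displaced disk b in bdc} is actually used in the proof, and it requires the uniform bounded-mean-motion estimate for centralizers, which your proposal does not contain.

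Your stage~(iii) is close in spirit to the paper's: once injectivity and compactness are known, the paper uses the two maps $\phi_1(g)=\rho(g)$ and $\phi_2(g)=g(x_0)$ (with $x_0$ having dense $f$-orbit), shows both are continuous bijections from the compact $G_f$ onto $\T^2$, and takes $h=\phi_1\phi_2^{-1}$ as the conjugacy. Your map $\psi(\overline{\alpha})=g_{\overline{\alpha}}(z_0)$ is essentially $\phi_2\circ\phi_1^{-1}$.
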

In Theorem \ref{cortoplinear2}, we obtain information on the group structure of the centralizers (e.g. abelian), and establish a close link between the topological linearizability with the topology of the centralizers  for a conservative map in $\cal{\cC}_{\overline{\omega}}$ with a generic rotation vector $\overline{\omega}$. In the minimal case, we have the following precise characterization, for any totally irrational $\overline{\omega}$.

\begin{thm}\label{cortoplinearminimal}
Let $f$ be a minimal map in ${\rm Homeo}(\T^2, {\rm Vol}) \cap \cal{C}_{\overline{\omega}}$ with $\overline{\omega}$ totally irrational, and let $G^{0}_{f}$ be the set of centralizers of $f$ in ${\rm Homeo}(\T^2, {\rm Vol}) \cap \mathrm{Homeo}_*(\mathbb{T}^2)$. Then $G^{0}_{f}$ is isomorphic to a subgroup of $\R^2/\Z^2$. Moreover, 
$G^{0}_{f}$ is compact in the $C^0$ topology$\iff$$f$ is topologically linearizable.
\end{thm}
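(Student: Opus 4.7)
The plan is to introduce a homomorphism $\Phi:G^0_f\to\R^2/\Z^2$ measuring how each centralizer shifts the given semi-conjugacy $h$, establish its injectivity by a fixed-point-plus-minimality argument for part (1), and then prove the equivalence in (2) by recognising that compactness of $G^0_f$ forces $\{f^n\}$ to be $C^0$-precompact, producing a compact group realisation of $\T^2$.

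First I would record a uniqueness principle for the semi-conjugacy: if $h_1,h_2\in C(\T^2,\T^2)$ are homotopic to $\mathrm{Id}$ and satisfy $h_if=T_{\overline{\omega}}h_i$, then $h_2=T_c\circ h_1$ for some $c\in\R^2/\Z^2$. Lifting to $\R^2$, the map $\tilde h_1-\tilde h_2:\R^2\to\R^2$ is $\Z^2$-periodic (both $h_i$ are homotopic to the identity) and $\tilde f$-invariant, so by minimality of $f$ it is constant. Taking $h_1=h$ and $h_2=hg$ for $g\in G^0_f$ produces a unique $\alpha(g)\in\R^2/\Z^2$ with $hg=T_{\alpha(g)}h$, and a direct computation shows $\Phi(g):=\alpha(g)$ is a group homomorphism. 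For injectivity, if $\alpha(g)=0$ one can choose compatible lifts so that $\tilde h\tilde g=\tilde h$, whence $|\tilde g^n(x)-x|\le 2\|\tilde h-\mathrm{Id}\|_\infty$ uniformly in $n$ and $x$; in particular the rotation set of $\tilde g$ reduces to $\{0\}$. Since $g$ is area-preserving and isotopic to the identity, a Franks-type fixed-point theorem yields a fixed point of $g$, and its fixed set is closed, $f$-invariant (because $gf=fg$) and nonempty, so minimality of $f$ forces it to equal $\T^2$, hence $g=\mathrm{Id}$.

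For part (2), the direction $f$ linearisable $\Rightarrow G^0_f$ compact is immediate: the linearising conjugacy may be taken in $\mathrm{Homeo}_*(\T^2)$ and automatically preserves Lebesgue (by unique ergodicity of $T_{\overline{\omega}}$), while the centraliser of $T_{\overline{\omega}}$ in $\mathrm{Homeo}(\T^2)$ coincides with the translation group $\{T_c:c\in\R^2/\Z^2\}$ by a standard density argument, so $G^0_f$ is the continuous image of the compact torus under the conjugation map. Conversely, assume $G^0_f$ is $C^0$-compact. Since $f\in G^0_f$, the closure $H:=\overline{\{f^n\}}$ is a compact subgroup of $\mathrm{Homeo}(\T^2)$, abelian by part (1). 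Minimality of $f$ makes $H\cdot x_0$ dense in $\T^2$ and hence equal to $\T^2$ by compactness, while the same fixed-point argument used in the injectivity step shows $\mathrm{Stab}_H(x_0)$ is trivial. The evaluation map $H\to\T^2$, $g\mapsto g(x_0)$, is therefore a continuous bijection between compact Hausdorff spaces and hence a homeomorphism.

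To upgrade this to a topological conjugacy, I note that $H$ is a compact topological group whose underlying space is a $2$-manifold, so by Hilbert's fifth problem $H$ is a connected $2$-dimensional compact Lie group, necessarily isomorphic to $\T^2$; equivalently, averaging a continuous metric on $\T^2$ over the Haar measure of $H$ yields an $H$-invariant metric making $\{f^n\}$ equicontinuous, and the classical structure theorem for minimal equicontinuous $\Z$-actions on compact metric spaces delivers the same conclusion. Under any such identification $f\in H$ acts on $\T^2$ by left translation, exhibiting $f$ as topologically conjugate to a translation, which must be $T_{\overline{\omega}}$ by rotation-vector invariance. The main delicate point I expect is the injectivity step, which relies on a precise Franks-type fixed-point theorem for area-preserving torus homeomorphisms whose lift has bounded displacement; a secondary technical issue is identifying $H$ as a topological group with $\T^2$, for which either the manifold-to-Lie-group classification or an averaging-plus-equicontinuity argument suffices.
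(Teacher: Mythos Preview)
Your argument is correct, but it diverges from the paper's proof in two notable ways.

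First, for the injective homomorphism $G^0_f\to\R^2/\Z^2$: you define $\Phi(g)$ via the semi-conjugacy, so that $hg=T_{\Phi(g)}h$, while the paper uses the rotation vector $\phi_1(g)=\rho(g)=\int_{\T^2}(\tilde g(\tilde x)-\tilde x)\,d\lambda$. The two coincide (since $\tilde h-\mathrm{Id}$ is bounded), and both proofs reduce injectivity to Franks' theorem plus minimality. Your route exploits the hypothesis $f\in\cal{C}_{\overline\omega}$ more directly and sidesteps the auxiliary lemma the paper needs (that every element of the centralizer automatically has bounded mean motion, hence is itself a pseudo-rotation).

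Second, and more substantially, for ``compact $\Rightarrow$ linearizable'' you pass through structure theory: you show $H=\overline{\{f^n\}}$ is a compact abelian group acting freely and transitively, then invoke Hilbert's fifth problem or the equicontinuity/Ellis structure theorem to identify $H$ with $\T^2$ as a Lie group. The paper avoids this entirely. It observes that both $\phi_1(g)=\rho(g)$ and the evaluation $\phi_2(g)=g(x_0)$ are continuous injections with dense image, hence homeomorphisms $G^0_f\to\T^2$ once $G^0_f$ is compact; the composite $\phi_1\circ\phi_2^{-1}:\T^2\to\T^2$ is then checked by hand to satisfy $T_{\overline\omega}\circ(\phi_1\phi_2^{-1})=(\phi_1\phi_2^{-1})\circ f$. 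You already have both maps available (your $\Phi$ and your evaluation $e$), so you could shortcut your own argument the same way: $\Phi\circ e^{-1}$ is the desired conjugacy, with no appeal to Lie theory or metric averaging. The paper's construction is therefore strictly more elementary, while your route has the conceptual virtue of isolating the general principle that a minimal map with $C^0$-precompact iterates is equicontinuous and hence a group rotation.
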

We do not know whether a $C^2$ conservative map in $\cal{C}_{\overline{\omega}}$ for a totally irrational $\overline{\omega}$ is always minimal. In the non-conservative setting, for a transitive and non-minimal example, see \cite[Theorem 1.2]{BCJR}.  Recently, by adapting the proof of Theorem \ref{cortoplinear2} and Proposition \ref{lem:displaced disk b in bdc}, we have shown that: for a totally irrational pseudo-rotation $f$ of $\T^2$ (not necessarily area preserving), $f$ is topologically linearizable if and only if $\{f^n\}_{n \in \Z}$ is pre-compact in the $C^0$ topology. We will treat this in a separate note. 

For the centralizers of smooth circle diffeomorphisms, a deep study was done  by Herman and Yoccoz \cite{H, Yo95}.
Herman \cite[Chap XII]{H} has constructed uncountable centralizers for smooth diffeomorphisms of the circle with irrational rotation numbers which are not smoothly linearizable, giving a counterexample to a question of H. Rosenberg and W. Thurston \cite{RT}: let $\cal{F}$ be a foliation of $\T^3$ with all leaves planes $\R^2$, is $\cF$ differentiably conjugate to a linear foliation? R.  P\'erez-Marco \cite{PM} later constructed analytic circle diffeomorphisms, and germs of holomorphic diffeomorphisms of $(\mathbb{C}, 0)$ with similar properties.
Conversely, Yoccoz constructed a $C^{\infty}$ diffeomorphism of the circle, with irrational rotation number, and with centralizers reduced to its iterates in $\diff^2(\T)$. He also showed that generically the centralizers of a $C^{\infty}$ diffeomorphism of the circle with irrational rotation number is the limit set of the group of its iterates in the $C^{\infty}$ topology, but this does not hold without the genericity condition. Smooth nonlinearizable diffeomorphisms in higher dimension can be constructed using Anosov-Katok's method, e.g. \cite{AK, FS}. But most of the previous constructions are wild, e.g. weak-mixing,  and do not admit a semi-conjugacy (see \cite{JK17} for a case where both semi-conjugacy and topologically nonlinearizability are obtain, based on certain classification result they proved). 

We will show in Theorem \ref{thm example} that the set of maps in Theorem \ref{cortoplinear2} also includes topologically nonlinearizable maps, by producing a $f$ with $G_{f}$ that is not pre-compact in the $C^0$ topology.
We constructed a $C^\infty$ conservative (resp. minimal)  totally irrational pseudo-rotation $f$ with bounded mean motion that is not topologically conjugate to a translation. Our construction combine the classical Anosov-Katok method (see \cite{AK,FK}), with J\"ager's theorem (Theorem \ref{thmJager}). We thus give a negative answer to the above question of Norton and Sullivan in the $C^{\infty}$ category:  in general, not even the infinite smoothness condition can force $h$ to be a homeomorphism.

\begin{thm}\label{thm example}
For any integer $d \geq 2$, there exists a $C^{\infty}$ area-preserving and minimal diffeomorphism $f : \T^d \to \T^d$ which is semi-conjugate to a translation by a map homotopic to the identity, but is not topologically conjugate to a translation. Moreover, we can require $f$ to have super-Liouvillean rotation vector.
\end{thm}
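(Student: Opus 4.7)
The plan is to construct $f$ on $\T^2$ via the Anosov--Katok conjugation method, and then extend to $\T^d$ for $d \geq 3$ by taking a product with a rigid translation on $\T^{d-2}$. We inductively build rational vectors $\alpha_n \in \Q^2$ converging to a totally irrational and super-Liouvillean $\alpha \in \R^2$, together with area-preserving $C^\infty$ diffeomorphisms $H_n$ of $\T^2$ homotopic to the identity, via the recipe $H_{n+1} := H_n \circ g_n$, where $g_n$ is an area-preserving $C^\infty$ diffeomorphism close to the identity, homotopic to the identity, and commuting with $T_{\alpha_n}$; such $g_n$ are produced by averaging a localized perturbation along the finite $T_{\alpha_n}$-orbits. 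With this choice, $f_n := H_n \circ T_{\alpha_n} \circ H_n^{-1}$ equals $H_{n+1} \circ T_{\alpha_n} \circ H_{n+1}^{-1}$; selecting $\alpha_{n+1}$ sufficiently close to $\alpha_n$ (with closeness depending on $\|H_{n+1}\|_{C^{n+1}}$) forces $d_{C^n}(f_{n+1}, f_n) \leq 2^{-n}$, so $f := \lim_n f_n$ exists in $C^\infty$, is area-preserving, and has rotation vector $\alpha$. The speed of $\alpha_n \to \alpha$ is a free parameter, so $\alpha$ can be arranged to be super-Liouvillean.

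To secure bounded mean motion, we require the universal-cover displacement $\sup_x \|\tilde g_n(x) - x\|$ to be summable, so that $\sup_x \|\tilde H_n(x) - x\|$ remains uniformly bounded by some $C > 0$. Since $\tilde f_n^k = \tilde H_n \circ T_{k\alpha_n} \circ \tilde H_n^{-1}$ on $\R^2$, this yields $\|\tilde f_n^k(z) - z - k\alpha_n\| \leq 2C$ uniformly in $k \in \Z$ and $z \in \R^2$, and the bound passes to the limit $f$. As $\alpha$ is totally irrational, J\"ager's theorem then produces a continuous semi-conjugacy $h : \T^2 \to \T^2$ homotopic to the identity with $h \circ f = T_\alpha \circ h$.

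The $g_n$ are additionally designed to achieve two further topological features: (a) minimality of $f$, via the standard Anosov--Katok ``spreading'' of orbits along the finite $T_{\alpha_n}$-orbits, ensuring every open set is eventually visited by the forward orbit of every point; and (b) a carefully placed perturbation producing in the limit a nontrivial fiber of $h$, for instance an embedded arc $I \subset \T^2$ collapsed to a single point by $h$. Area preservation forces every fiber of $h$ to have empty interior, so such nontrivial fibers are at most one-dimensional, which is consistent with the arc construction. Non-injectivity of $h$ rules out topological conjugacy of $f$ to any translation: if $g f g^{-1} = T_\beta$ for some homeomorphism $g$ of $\T^2$ with $g_* = A \in GL_2(\Z)$, then $\beta \equiv A\alpha \pmod{\Z^2}$ is totally irrational and $\phi := h \circ g^{-1}$ satisfies $\phi T_\beta = T_\alpha \phi$. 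Writing $\tilde \phi(w) = A^{-1} w + u(w)$ for the lift, the relation $\tilde\phi(w+\beta) = \tilde\phi(w) + \alpha + m$ for some $m \in \Z^2$, combined with $\beta = A\alpha + k$ ($k \in \Z^2$), yields $u(w + \beta) \equiv u(w) \pmod{\Z^2}$; minimality of $T_\beta$ then forces $u \pmod{\Z^2}$ to be constant, so $\phi$ is affine, hence a homeomorphism, contradicting (b).

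For $d \geq 3$, set $F := f \times T_\gamma$ on $\T^2 \times \T^{d-2}$ with $\gamma$ chosen so that the overall rotation vector $(\alpha, \gamma)$ is totally irrational and super-Liouvillean; minimality of $F$ is secured by a generic choice of $\gamma$, the semi-conjugacy $h \times \mathrm{Id}$ remains non-injective, and area preservation, bounded mean motion, and the super-Liouvillean property all pass to the product. The main technical obstacle is in the inductive choice of the $g_n$: the orbit-spreading required for minimality and the localized distortion producing the nontrivial $h$-fiber needed for non-conjugacy are competing constraints that must be orchestrated while preserving $C^\infty$ convergence of $f_n$, the uniform $C^0$ bound on $H_n$, and area-preservation. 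These are resolved by a delicate diagonal calibration of the supports and sizes of the $g_n$ across scales, in the spirit of the Anosov--Katok-type constructions.
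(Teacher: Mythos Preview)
Your outline is correct and follows essentially the same route as the paper: Anosov--Katok conjugation with conjugating maps $g_n$ whose $C^0$ displacements on the universal cover are summable (so $\sup_x\|\tilde H_n(x)-x\|$ stays bounded, giving bounded mean motion in the limit), J\"ager's theorem for the semi-conjugacy, and a product with a rigid rotation for $d\geq 3$.

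The one real difference is the mechanism for non-conjugacy. The paper does not try to control the fibers of any semi-conjugacy; instead it builds into $f$ the intrinsic property that for every $\varepsilon>0$ there are $x,y$ with $d(x,y)<\varepsilon$ and $d(f^N(x),f^N(y))\geq 1/1000$ for some $N$. This immediately rules out conjugacy to a translation, since such a conjugacy would make $\{f^n\}_{n\in\Z}$ equicontinuous. This property is robust under $C^0$ limits, so it passes cleanly through the inductive construction. Your route---exhibit a non-injective semi-conjugacy and then use an affine-rigidity argument to show that any conjugacy $gfg^{-1}=T_\beta$ would force that semi-conjugacy to be a homeomorphism---is also valid, but as written it has a small gap: you obtain $h$ from J\"ager's theorem and then claim your perturbations create a nontrivial fiber \emph{of that} $h$. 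J\"ager's $h$ is not given explicitly in terms of your $H_n$, so you cannot read off its fibers from the construction. The clean fix is to observe that $H_n^{-1}$ itself converges in $C^0$ (since $d_{C^0}(H_{n+1}^{-1},H_n^{-1})=d_{C^0}(h_{n+1}^{-1}\circ H_n^{-1},H_n^{-1})\leq d_{C^0}(h_{n+1},\Id)$), and use $G:=\lim H_n^{-1}$ directly as your semi-conjugacy; its non-injectivity is then something you can arrange by design (e.g.\ fix two points $X,Y$ at macroscopic distance with $H_n^{-1}(X),H_n^{-1}(Y)$ forced together), and your affine argument applies to $G$ verbatim. The paper's separation property is in fact the dual manifestation of exactly this collapsing.
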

\begin{rem}\label{nonrigidandboundedmeanmotion}
By Theorem \ref{thm example}, there exists $f$ satisfying the conditions of Theorem \ref{nonBrjunont} and \ref{cortoplinear2}, which is not topologically linearizable. From the proof of Theorem \ref{thm example}, we easily see that $\{f^n\}_{n \in \N}$ is not pre-compact in the $C^0$ topology. Thus the last item in Theorem \ref{cortoplinear2} is not a consequence of its condition.
\end{rem}
We mention that a similar result on $\mathbb{D}$ was obtain by \cite{JK17} using a different method. In particular, in their case, they can classify all the semi-conjugacies \cite[Corollary 1.3]{JK17}.

\subsection{{\sc Open problems}} \label{subsop}

As a natural by-product of the construction in Theorem \ref{thm example}, the pseudo-rotation $f$ we obtained can have  super-Liouvillean rotation vector satisfying \eqref{eq:super-liouvullen}, and bounded mean motion, in which case, it is $C^{\infty}$-rigid due to Theorem \ref{nonBrjunont}.\footnote{\,Instead of using Theorem \ref{nonBrjunont}, we can also require $f$ in Theorem \ref{thm example} to be $C^{\infty}$-rigid by a direct adaption of the construction. }
\clb
 Therefore, the following questions seem natural.
\begin{que}
Is a $C^r~(r\geq1 \text{ or } r=\infty)$ conservative  irrational pseudo-rotation with bounded mean motion always $C^0$-rigid? If yes, with which type of irrational vector  a $C^r$ conservative irrational pseudo-rotation   can be non-conjugate to a translation? Otherwise,  with which type of irrational vector  a $C^r$ conservative irrational pseudo-rotation  can be not $C^0$-rigid? \end{que}

Another natural question is following.
\begin{que}
Does Question \ref{qnortonsullivan} have a positive answer for analytic diffeomorphisms ?
\end{que}

Our next question is motivated by the desire to further understand Question \ref{qnortonsullivan}, in connection with  the following result,
recently announced by Avila and R. Krikorian:
\begin{quote}
There exists a neighbourhood $V$ (for the $C^{\infty}$ topology) of the set of rigid rotations on the disk $\mathbb{D}$ such that each pseudo-rotation $f$ in $V$ is almost-reducible.
\end{quote}
Here a $C^{\infty}$ diffeomorphism $f$ on $\mathbb{D}$ is said almost-reducible if there exists a sequences of $C^{\infty}$ area-preserving diffeomorphisms $h_n$ such that $h_n^{-1} f h_n$ converges to  $T_{\rho(f)}$ in the $C^{\infty}$ topology.
The motivation of their work goes back to the following old conjecture of Birkhoff
which is still unsolved (see \cite[Page 712]{Bi3} and
\cite{H}): \begin{con}[Birkhoff's sphere conjecture]Let $f$ be an orientation preserving, real-analytic, Lebesgue
measure-preserving diffeomorphism of the 2-sphere $\mathbb{S}^2$,
and having only two periodic (necessarily fixed) points. Then $f$
is conjugate to a rigid irrational rotation.\end{con}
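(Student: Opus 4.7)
The plan is to combine the rigidity machinery of this paper with the Avila--Krikorian almost-reducibility theorem, and to leverage the real-analytic hypothesis. Let $f$ be such a diffeomorphism, with fixed points $p_{\pm}$. Since $f^n$ has no fixed point for $n\neq 0$, index arguments and Franks' theorem force the local rotation number $\alpha$ at each $p_{\pm}$ to be irrational; removing $p_{\pm}$ identifies $\mathbb{S}^2\setminus\{p_{\pm}\}$ with an open annulus, on which $f$ acts as an area-preserving pseudo-rotation with rotation number $\alpha$ compatible with the local rotations at $p_{\pm}$.

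For Brjuno $\alpha$, I would begin with Siegel--Brjuno analytic linearization in neighbourhoods of $p_{\pm}$, then try to enlarge the linearization domain via R\"ussmann or Herman-type invariant-circle theorems, relying on the absence of periodic points to propagate the conjugacy globally. For Liouvillean $\alpha$, I would apply the Avila--Krikorian result to produce smooth conjugators $h_n$ with $h_n^{-1}fh_n\to T_\alpha$ in $C^{\infty}$, and then try to show that $\{h_n\}$ admits a limit point in the real-analytic category. A sphere analogue of Theorem \ref{cortoplinear2} suggests the natural path: establish compactness of the centralizer group $G_f\subset\diff^{r-1}(\mathbb{S}^2,{\rm Vol})$ in the $C^0$ topology (noting that $h_nT_\beta h_n^{-1}$ furnishes a wealth of approximate centralizers for each $\beta$), then upgrade from $C^0$ to real-analytic conjugacy via Cauchy estimates on a fixed complex strip, exploiting the analyticity of $f$. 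A complementary piece of input comes from the rigidity half of the paper: if a $C^0$-rigid analytic $f$ has an uncountable centralizer, analytic regularity should force these centralizers to form an analytic one-parameter family, which would essentially realise the conjugating map.

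The main obstacle, and the reason the conjecture remains open, is threefold. First, adapting the rigidity propositions of the present paper from $\T^2$ to $\mathbb{S}^2$ with its two elliptic fixed points requires compatibility between the local rotations at $p_{\pm}$ and the global pseudo-rotation structure on the complementary annulus; the ``bounded mean motion'' type hypotheses that underpin Proposition \ref{lem:displaced disk b in bdc} do not have an obvious sphere analogue. Second, promoting the almost-reducibility, which is perturbative by construction, to a global statement about arbitrary area-preserving analytic diffeomorphisms satisfying only the absence-of-periodic-points hypothesis, is essentially the conjecture itself and will likely require a new nonperturbative input. Third, converting the $C^{\infty}$ estimates produced by almost-reducibility into analytic estimates on a fixed complex strip demands uniform complexification bounds that current methods do not supply. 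The second of these is the hard part, since the local-to-global bridge between a neighbourhood of the rigid rotation in $\diff^{\infty}(\mathbb{S}^2,{\rm Vol})$ and the full analytic moduli space has no known direct handle; one probably needs a genuinely new idea blending real-analyticity, area-preservation, and the topology of $\mathbb{S}^2$.
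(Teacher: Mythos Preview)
The statement you are attempting to prove is Birkhoff's sphere conjecture, which the paper explicitly presents as an \emph{unsolved} conjecture (``the following old conjecture of Birkhoff which is still unsolved''). There is no proof in the paper to compare your proposal against; the conjecture is mentioned only as motivation for the Avila--Krikorian almost-reducibility result and for Question~\ref{queakontorus}.

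Your proposal is not a proof either, and you essentially acknowledge this yourself: the Brjuno case relies on a global propagation of local linearization that you do not know how to carry out, and the Liouvillean case rests on (i) the Avila--Krikorian theorem, which is only announced and only local, and (ii) a passage from $C^\infty$ almost-reducibility to genuine analytic conjugacy via ``Cauchy estimates on a fixed complex strip'', for which no mechanism is supplied. The centralizer-compactness strategy you sketch is exactly the kind of thing Theorem~\ref{cortoplinear2} does on $\T^2$, but that theorem requires bounded mean motion and super-Liouvillean arithmetic as standing hypotheses; on the sphere you have neither a bounded-mean-motion analogue nor any reason to expect $G_f$ to be compact, and indeed Theorem~\ref{thm example} shows that even on $\T^2$ the centralizer need not be pre-compact. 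So each of the three obstacles you list is genuine and fatal to the argument as written.

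In short: the paper contains no proof of this statement, your proposal is a heuristic outline rather than a proof, and the gaps you identify are precisely the reasons the conjecture remains open.
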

An important ingredient \footnote{\,From the announcement of Artur Avila at the conference in memory of Jean-Christophe Yoccoz. } in Avila-Krikorian's proof is an {\it a priori bound} for a renormalisation scheme obtained by using \cite[Lemma 3.1]{A}. Since our proof of Theorem \ref{sup} and \ref{nonBrjunont} are also based on a generalisation of \cite[Lemma 3.1]{A} to $\T^2$ under certain boundedness condition, namely our Proposition \ref{lem:displaced disk b in bdc}, it is then natural to ask if the following weaker version of Norton-Sullivan's question could be true in the $C^{\infty}$ category.
\begin{que}\label{queakontorus}
Suppose that  $f \in \mathrm{Homeo}_*(\T^2) \cap \diff^{\infty}(\T^2)$ is a conservative totally irrational pseudo-rotation regularly semi-conjugate to a translation, then is $f$ almost-reducible?
\end{que}

By Remark \ref{nonrigidandboundedmeanmotion}, we can see that, even for a local result on Question \ref{queakontorus} in analogue to Avila-Krikorian's, the almost-reducibility cannot be replaced by topological conjugacy. Indeed, for any $f$ satisfying both Theorem \ref{nonBrjunont} and Theorem \ref{thm example}, a sequence of iterates of $f$ will accumulate at the identity. But $f^{\ell}$ is not topologically linearizable for any integer $\ell \geq 1$, for otherwise we would obtain a non-translation homeomorphism $hfh^{-1}$ which commutes with a minimal translation $T_{\rho(f)}$.

\smallskip
This article is organized as follows. In Section \ref{sec pre}, we introduce some notations, and recall some classical definitions and results on the plane. In Section \ref{sec:proof}, we prove Proposition \ref{lem:displaced disk b in bdc} and Corollary \ref{cor} which are the key step to prove Theorem \ref{sup} and  \ref{nonBrjunont}. We prove Theorem \ref{sup} and  \ref{nonBrjunont} in Section \ref{sec:thmsupnonbrju}. We then prove Theorem \ref{cortoplinear2} and \ref{cortoplinearminimal} in Section \ref{sec:thmcorto}. We prove Theorem \ref{thm example}  in Section \ref{secthm4}. \clb
\smallskip

%I also thank the referees for their careful reading of the text and for their many helpful remarks.\smallskip

%that a pseudo-rotation diffeomorphism $f$ of class $C^k (k\geq 2$) is
%$C^{k-1}$-rigidity  in one of the two following cases
%\begin{enumerate}\label{results}
%  \item the rotation vector of $f$ is non-Brjuno
%totally irrational and $f$ satisfies the bounded mean motion;
 % \item the rotation vector of $f$ is 
%irrational but not totally irrational and $f$ satisfies bounded deviation condition.
%\end{enumerate}

%We  also know that there exists a
%$C^k$-diffeomorphism that is not $C^{k-1}$-rigidity but  satisfies
%all the hypotheses above except for the bounded mean motion by
%using a method proposed  by A. Avila in one of his papers under publication.

\section{Preliminaries}\label{sec pre}
\subsection{Some properties of the rotation set}\label{sec rotationsets}\quad\par
For any $v \in \T^2$ or $\R^2$, let $T_{v} : \T^2 \to \T^2$ denote the map  given by $T_{v}(z) = z + v$.
For a given $f \in \mathrm{Homeo}_{*}(\T^2)$, let $\tilde{f}$ be a lift of $f$ to $\R^2$.
By the definition of $\rho(\tilde{f})$, we easily deduce the
following elementary properties:
\begin{enumerate}\label{prop:ROT}
  \item[1.] $\rho(T_{k} \tilde{f})=\rho(\tilde{f})+k$
  for every $k\in \mathbb{Z}^2$;
  \item[2.] $\rho(\tilde{f}^q)=q\rho(\tilde{f})$ for every $q\in \mathbb{N}$.
\end{enumerate}\smallskip

We recall that the group $\mathrm{SL}(2, \R)$ acts on $\R^2$ by  affine automorphisms: for any matrix $A = \begin{bmatrix} a & b \\ c & d \end{bmatrix} \in \mathrm{SL}(2, \R)$,  we set $A \cdot (x,y) = (ax+by, cx+dy)$. For any $A \in \mathrm{SL}(2,\Z)$, let $T_A : \T^2 \to \T^2$ denote the unique $C^{\infty}$ area-preserving  diffeomorphism  such that $ \pi A=T_A\pi$.

Given any $f \in \mathrm{Homeo}(\T^2)$ and  $A \in \mathrm{SL}(2,\Z)$, we set  $f_{A}=T_{A} f T_{A}^{-1}$.  Assume in addition that $f \in \mathrm{Homeo}_{*}(\T^2)$, and let $\tilde{f}$ be a lift of $f$ to $\R^2$, then $\tilde{f}_{A} := A\tilde{f}A^{-1}$ is a lift of $f_{A}\in \mathrm{Homeo}_*(\T^2)$, and $\rho(\tilde{f}^q_{A})=qA\cdot \rho(\tilde{f})$ for any $q\in\N$ (see, e.g. \cite[Section 0.3.3]{K07}). 
It is  direct to see that: if $f$ preserves the Lebesgue measure on $\T^2$, then so does $f_A$; and for any $r \in \N \cup \{\infty\}$,  $f$ is $C^r$-rigid if and only if $f_A$ is $C^r$-rigid. \clb

\subsection{Bounded  deviation condition and bounded mean motion}\label{sec: BDC and BMM} \quad

We denote by $\langle\, ,\rangle$, resp. $\|\cdot\|$, the standard scalar product, resp. the Euclidean norm on $\mathbb{R}^2$.
For any $v=(v_1,v_2)\in \mathbb{R}^2\setminus\{(0,0)\}$, we denote by $v^\bot$ the orthogonal unit vector $v^\bot=(-v_2,v_1)/\|v\|$. 

Let us recall the bounded  deviation condition and bounded mean motion property (see \cite{J}). 
 \begin{defn}\label{BDC} Let $f$ be a pseudo-rotation of $\T^2$. We say that  $f$ has \emph{bounded mean
motion } (with a bound $\kappa\geq0$) if there exists $\tilde{f}$, a lift of $f$,  such that   for any $ z\in
\mathbb{R}^2$ and $ n\in \mathbb{N}$,
\begin{equation}\label{eq:bmm}\|\tilde{f}^n(z)-z-n\rho(\tilde{f})\|\leq \kappa. \end{equation}  We say that  $f$ has  \emph{bounded deviation parallel to $v \in \R^2 \setminus \{(0,0)\}$} (with a bound $\kappa\geq0$), if there exists  $\tilde{f}$, a lift of $f$, such that  for any $z\in \mathbb{R}^2$ and $n\in\mathbb{N}$,
\begin{equation}\label{eq:bdc}
|\langle \tilde{f}^n(z)-z-n\rho(\tilde{f}), v^{\perp} \rangle|\leq \kappa.
\end{equation} 
We note that the terms on the left hand sides of \eqref{eq:bmm}, \eqref{eq:bdc} are independent of the choice of the lift $\tilde{f}$. It is also clear that \eqref{eq:bmm} implies \eqref{eq:bdc} for any $v \in \R^2 \setminus \{(0,0)\}$.
\end{defn}

Given any $\kappa \geq 0$, any $\overline{\omega}\in \R^2/\Z^2$, and any $v \in \R^2 \setminus \{(0,0)\}$, we let $\mathcal{C}_{\kappa,\overline{\omega}}$  (resp. $\mathcal{D}_{\kappa,\overline{\omega},v}$) be the set of  $f \in
\mathrm{Homeo}_*(\mathbb{T}^2)$ such that $\rho(f) = \overline{\omega}$ and there exists $\tilde{f}$,  a lift of $f$ to $\mathbb{R}^2$, satisfying 
bounded mean motion (\ref{eq:bmm}) (resp. bounded deviation condition (\ref{eq:bdc})) with a bound $\kappa$.

For any $m\in \mathbb{N}$ and $A \in \mathrm{SL}(2,\Z)$, it is direct to see that if $f \in \mathcal{C}_{\kappa,\overline{\omega}}$, then $f^m \in \mathcal{C}_{\kappa,m\overline{\omega}}$ and $f_A \in \mathcal{C}_{\|A\|\kappa, T_A (\overline{\omega}) }$. Indeed, for any $n\in\N$ and $z\in\R^2$ we have 
\ary
\label{property of ABMM}
&\|(A\tilde{f}A^{-1})^n(z)-z-n\rho(A\tilde{f}A^{-1})\|=\|A
\tilde{f}^{n}A^{-1}(z)-AA^{-1}(z)-nA\cdot\rho(\tilde{f})\|\leq \|A\|\kappa;  \\
\label{property of BMM}
&\|(\tilde{f}^m)^n(z)-z-n\rho(\tilde{f}^m)\|
=\|\tilde{f}^{nm}(z)-z-nm\rho(\tilde{f})\|\leq \kappa.
\eary
Similarly, we can directly verify that for any $m \in \N$, any $f \in \mathcal{D}_{\kappa, \overline{\omega},v}$, we have $f^{m} \in \mathcal{D}_{\kappa, m\overline{\omega},v}$.
We also have the following lemma:
\begin{lem}\label{lem:bdc} For any $A \in \mathrm{SL}(2,\Z)$, any $f \in \mathcal{D}_{\kappa, \overline{\omega},v}$, we have $f_A \in \mathcal{D}_{\|A\|\kappa,T_A(\overline{\omega}), A\cdot v}$. \end{lem}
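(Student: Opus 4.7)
The plan is to verify the bounded deviation condition directly for the canonical lift $\tilde{f}_A := A\tilde{f}A^{-1}$ of $f_A$, where $\tilde{f}$ is a lift of $f$ realising the hypothesis with constant $\kappa$. From the discussion preceding \eqref{property of ABMM} we have $\rho(\tilde{f}_A) = A\cdot\rho(\tilde{f})$, and the identity
\[
(\tilde{f}_A)^n(z) - z - n\rho(\tilde{f}_A) \;=\; A\bigl(\tilde{f}^n(A^{-1}z) - A^{-1}z - n\rho(\tilde{f})\bigr)
\]
is immediate. The problem therefore reduces to comparing $\langle A w,(A\cdot v)^{\perp}\rangle$ with $\langle w, v^{\perp}\rangle$ for the displacement vector $w = \tilde{f}^n(A^{-1}z)-A^{-1}z-n\rho(\tilde{f})$.

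The key linear-algebraic step I will establish is the following identity, valid for every $A \in \mathrm{SL}(2,\R)$, every non-zero $v \in \R^2$ and every $w \in \R^2$:
\[
\langle Aw,\,(A\cdot v)^{\perp}\rangle \;=\; \frac{\|v\|}{\|A\cdot v\|}\,\langle w,\,v^{\perp}\rangle.
\]
To obtain this I will write $v^{\perp} = Jv/\|v\|$ with $J = \bigl[\begin{smallmatrix} 0 & -1 \\ 1 & 0\end{smallmatrix}\bigr]$, observe that $\det A = 1$ gives the routine identity $JA = A^{-T}J$, deduce $(A\cdot v)^{\perp} = (\|v\|/\|A\cdot v\|)\,A^{-T} v^{\perp}$, and finally use the elementary relation $\langle A w, A^{-T} u\rangle = \langle w, u\rangle$. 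Substituting the displacement vector above and invoking the hypothesis $|\langle w, v^{\perp}\rangle|\leq \kappa$ then yields
\[
\bigl|\langle (\tilde{f}_A)^n(z) - z - n\rho(\tilde{f}_A),\,(A\cdot v)^{\perp}\rangle\bigr| \;\leq\; \frac{\|v\|}{\|A\cdot v\|}\,\kappa.
\]

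To conclude I will control the prefactor via $\|v\| = \|A^{-1}(A\cdot v)\| \leq \|A^{-1}\|\,\|A\cdot v\|$ and invoke the fact that any matrix in $\mathrm{SL}(2,\R)$ has reciprocal singular values, so $\|A^{-1}\| = \|A\|$. This gives the required bound $\|A\|\kappa$ and proves $f_A \in \mathcal{D}_{\|A\|\kappa, T_A(\overline{\omega}), A\cdot v}$. There is no real obstacle in this argument; the only non-routine input is the identity $JA = A^{-T}J$ (immediate from $\det A = 1$), and the only points that require some care are the normalisation of $v^{\perp}$ as a unit vector (which is what produces the prefactor $\|v\|/\|A\cdot v\|$) and the equality $\|A^{-1}\| = \|A\|$ in $\mathrm{SL}(2,\R)$.
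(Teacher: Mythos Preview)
Your proof is correct and follows essentially the same route as the paper: both identify $(A\cdot v)^\perp$ as a positive multiple of $(A^T)^{-1}v^\perp$, use $\langle Aw,(A^T)^{-1}u\rangle=\langle w,u\rangle$, and bound the resulting normalisation factor by $\|A\|$. Your write-up simply makes explicit the steps the paper leaves to direct verification (the identity $JA=A^{-T}J$ and the equality $\|A^{-1}\|=\|A\|$ for $\mathrm{SL}(2,\R)$).
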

\begin{proof}
Let $\tilde{f}$ be a lift of $f$ satisfying \eqref{eq:bdc}.
We have seen that $\rho(A\tilde{f} A^{-1}) = A \cdot \rho(\tilde{f})$. Moreover, we can directly verify that $(A\cdot v)^{\bot} = \frac{(A^T)^{-1}\cdot v^{\bot}}{\|(A^T)^{-1}\cdot v^{\bot}\|}$, where  $A^T$ is the transpose of $A$.
Thus for any  $n\in\mathbb{N}$ and $z\in \mathbb{R}^2$, we have 
\begin{eqnarray*}\label{eq:Abdc}
% \nonumber to remove numbering (before each equation)
 & &|\langle A\tilde{f}^nA^{-1}(z)-z-n\cdot\rho(A\tilde{f}A^{-1}),(A\cdot v)^\bot\rangle|\nonumber
\\ &=& \left|\langle A \cdot (
\tilde{f}^{n}A^{-1}(z)-A^{-1}(z)-n\rho(\tilde{f})), \frac{(A^T)^{-1}\cdot v^\bot}{\|(A^T)^{-1}\cdot v^\bot\|}\rangle\right|\nonumber
\leq
 \|A\|\kappa.\nonumber
 \end{eqnarray*}
%\begin{equation*}
%|\langle A\tilde{f}^nA^{-1}(z)-z-n\cdot\rho(A\tilde{f}A^{-1}),(A\cdot v)^\bot\rangle|=|\langle A \cdot (
%\tilde{f}^{n}A^{-1}(z)-A^{-1}(z)-n\rho(\tilde{f})), \frac{(A^T)^{-1}\cdot v^\bot}{\|(A^T)^{-1}\cdot v^\bot\|}\rangle|\leq \|A\|\kappa.
%\end{equation*} 
 \end{proof}

We will use the following result by  J\"ager, contained in \cite[Proposition A and Theorem C]{J}.
\begin{thm}\label{thmJager}
Suppose that $f \in \mathrm{Homeo}_*(\T^2)$ is a conservative (resp. minimal) totally irrational pseudo-rotation with bounded mean motion. Then $f$ is semi-conjugate to the irrational rotation on $\T^2$, i.e. there exists a continuous surjection $h : \T^2 \to \T^2$ such that $hf = T_{\omega}h$ for some  totally irrational $\omega \in \T^2$. Moreover, if $f$ is minimal, then one can take $h$ to be homotopic to $\Id_{\T^2}$ 
\end{thm}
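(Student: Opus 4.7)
The plan is to reduce the construction of $h$ to the solvability of a cohomological equation over $f$, which is then handled by the Gottschalk--Hedlund theorem in the minimal case and by a compactness/averaging argument in the conservative case.

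I would start by fixing a lift $\tilde{f} : \R^2 \to \R^2$ with $\rho(\tilde{f}) = \omega$ and introducing the continuous $\Z^2$-periodic displacement $D : \T^2 \to \R^2$ defined by $D(z) = \tilde{f}(z) - z - \omega$. Looking for a semi-conjugacy of the form $\tilde{h}(z) = z + u(z)$ with $u : \T^2 \to \R^2$ continuous, the intertwining $\tilde{h} \circ \tilde{f} = T_{\omega} \circ \tilde{h}$ becomes the cohomological equation
\[
u \circ \tilde{f} - u = -D,
\]
and the Birkhoff sums telescope as $\sum_{k=0}^{n-1} D(\tilde{f}^k z) = \tilde{f}^n(z) - z - n\omega$, which is uniformly bounded in $n$ and $z$ by the bounded mean motion hypothesis.

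In the minimal case this is the exact setup of the Gottschalk--Hedlund theorem: $f$ is a minimal homeomorphism on the compact space $\T^2$ and $D$ is a continuous $\R^2$-valued cocycle whose Birkhoff sums are uniformly bounded, so the cohomological equation admits a continuous solution $u$. Defining $h(z) := z + u(z) \bmod \Z^2$ gives a continuous map homotopic to $\Id_{\T^2}$ (since $u$ is $\Z^2$-periodic), and $hf = T_{\omega} h$ by construction. Surjectivity follows because $h(\T^2)$ is compact, non-empty, and $T_{\omega}$-invariant, and $T_{\omega}$ is minimal since $\omega$ is totally irrational.

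In the conservative but non-minimal case, Gottschalk--Hedlund cannot be invoked directly. My plan would be to pick a minimal $f$-invariant set $K \subset \T^2$ (which exists by Zorn) and apply Gottschalk--Hedlund on $K$ to obtain a continuous $u_K : K \to \R^2$, producing a continuous semi-conjugacy $h_K : K \to \T^2$ from $f|_K$ onto $(\T^2, T_{\omega'})$ for some totally irrational $\omega'$. One then promotes $h_K$ to $h$ defined on all of $\T^2$ by combining the Ces\`aro-type averages $\frac{1}{n}\sum_{k=0}^{n-1}(\tilde{f}^k(z) - k\omega)$, whose formal coboundary is $-D$, with the invariance of Lebesgue measure to get a.e. convergence via the ergodic theorem; continuity is then recovered by using the total irrationality of $\omega$ together with the equicontinuity-like properties coming from the uniform bound on the deviations. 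The resulting $h$ need not be homotopic to the identity, which is consistent with the statement. The main obstacle I anticipate is exactly this last step: upgrading the measurable, partially defined semi-conjugacy to a globally defined continuous one in the absence of minimality, and it is there that the totally irrational hypothesis on the rotation vector must be used in an essential way to rule out nontrivial invariant factors and ensure the unique continuous extension.
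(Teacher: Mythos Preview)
This theorem is not proved in the paper at all: it is quoted from J\"ager's work \cite{J} (Proposition~A and Theorem~C there) and used as a black box. So there is no ``paper's own proof'' to compare against; what follows are comments on the substance of your sketch.

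The minimal case is handled correctly. The displacement cocycle $D(z)=\tilde f(z)-z-\omega$ is continuous and $\Z^2$-periodic, its Birkhoff sums are uniformly bounded by the bounded-mean-motion assumption, and since $f$ is minimal on a compact metric space the Gottschalk--Hedlund theorem applies componentwise to produce a continuous $u$ solving $u\circ f-u=-D$. Then $h(z)=z+u(z)\bmod\Z^2$ is continuous, homotopic to $\Id_{\T^2}$, satisfies $hf=T_\omega h$, and is surjective because its image is closed and $T_\omega$-invariant while $T_\omega$ is minimal. This is the standard route and is essentially how J\"ager proceeds in the minimal case.

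The conservative (non-minimal) case, however, has a genuine gap which you yourself flag. Your plan --- solve on a minimal subset $K$ via Gottschalk--Hedlund, then extend to all of $\T^2$ using Ces\`aro averages and the ergodic theorem --- does not work as written. First, $f$ need not be ergodic with respect to Lebesgue measure, so the ergodic averages of $D$ can converge to different limits on different ergodic components; you get at best a measurable, possibly discontinuous, candidate for $u$ with no reason to agree with $u_K$ on $K$. Second, ``equicontinuity-like properties coming from the uniform bound on the deviations'' is an assertion, not an argument: bounded mean motion bounds the \emph{size} of the Birkhoff sums but says nothing about their modulus of continuity, so there is no evident mechanism to upgrade the measurable solution to a continuous one. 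J\"ager's proof of the conservative case does not go through a cohomological equation solved pointwise in this way; he constructs the semi-conjugacy more directly from the bounded-deviation structure (a two-dimensional analogue of Poincar\'e's order-based construction on the circle), and the totally-irrational hypothesis on $\omega$ is used to ensure surjectivity of the resulting map rather than to manufacture continuity after the fact. If you want to complete your approach, that is the step you need a concrete idea for.
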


\subsection{On semi-irrational vectors}\label{sec:semi-irrational}\quad

In this section, we define several quantities associated to a semi-irrational vector $\omega = (\omega_1, \omega_2) \in \R^2$. In the following, we set $\overline{\omega} = \omega \mod \Z^2 \in \R^2/\Z^2$.

By definition, there exist $(c,d)\in\Z^2\setminus\{(0,0)\}$,  and $e=p/q\in\Q$ with $p,q \in \Z$, $q > 0$,  such that $c\omega_1+d\omega_2+e=0$ and $gcd(c,d) = gcd(p,q)=1$. Here $gcd$ denotes the greatest common denominator. Since $\omega$ is semi-irrational, we deduce that the choice of $(c,d,e)$ is unique up to a sign. Hence the set $\{\pm(d,-c)\}$  and the integer $q$ are uniquely determined by $\omega$. Moreover, it is easy to check that $\{\pm(d,-c)\}$   and $q$ only depend on $\overline{\omega} = \omega \mod \Z^2$. We will call $\pm(d,-c)$  ( resp. integer $q$ )  the {\em character vectors } (resp. {\em character number}) of $\overline{\omega}$.
 \smallskip

Let $v = (d,-c)$ be a character vector of $\overline{\omega}$ defined as above. We choose $a,b\in\Z$ such that $A=\begin{bmatrix} a & b \\ c & d \end{bmatrix}  \in \mathrm{SL}(2,\mathbb{Z})$. The existence of $A$ follows from $gcd(c,d)=1$. We thus obtain $qA\cdot \omega+(0,p)=(\alpha,0)$, where $\alpha = q(a\omega_1 + b\omega_2)$. It is clear that: $\alpha\not\in\Q$ for otherwise $\omega \in \Q^2$; and $\alpha \mod \Z$ depends only on $q, A, v$ and $\overline{\omega}$.
We note that: 
\enmt
\item given a character vector $v=(d,-c)$, the constant $\alpha \mod \Z$ is independent of $A$. 
Indeed, for any $a',b' \in \Z$ such that $A' =  \begin{bmatrix} a' & b' \\ c & d \end{bmatrix}  \in \mathrm{SL}(2,\mathbb{Z})$, we have $(a',b') \in (a,b) + \Z(c,d)$. As a consequence, let $qA'\cdot \omega + (0,p) =: (\alpha',0)$, we have $\alpha' - \alpha \in p\Z$;
\item for any $a \in \R$, we let  $\norm{a}_{\T}$ denote the distance between $a$ and the closest integer. Then $\norm{\alpha}_{\T}$ is independent of the choice of $v$:  this follows from  replacing $(A,v)$ by $(-A,-v)$ in the above discussion, and observing that $\norm{\alpha}_{\T} = \norm{(-\alpha)}_{\T}$.
\eenmt
We will say that $\cal{F}(\overline{\omega}) := \norm{\alpha}_{\T}$ is the {\em character frequency} of $\overline{\omega}$. We have the following lemma.
\begin{lem}\label{lem:Abdc}
Let $f$ be a semi-irrational pseudo-rotation on $\T^2$ with $\rho(f) = \overline{\omega}$, and there exists an integer $\ell \geq 1$ such that $f^{\ell}$ has bounded deviation  parallel to a character vector of $\overline{\omega}$ with a bound $\kappa\geq0$. Then there exist an integer $L \geq 1$ and $A \in \mathrm{SL}(2,\Z)$, such that $f' :=T_{A} f^{L} T_{A}^{-1} \in  \mathcal{D}_{\|A\|\kappa, \pi(\ell \beta,0), (1,0)}$, where $\beta = \cal{F}(\overline{\omega})$. 
\end{lem}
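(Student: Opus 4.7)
The plan is to prove this by a direct application of Lemma~\ref{lem:bdc} combined with the identities relating $A\cdot\omega$ to $(\alpha, -p/q)$ from Section~\ref{sec:semi-irrational}, followed by iterating $q$ times to remove the rational second coordinate. The strategy unfolds in four short steps, and the only nontrivial bookkeeping is to arrange the sign of $\alpha$ so that the final rotation vector equals $\pi(\ell\beta,0)$ rather than $\pi(-\ell\beta,0)$.

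First, let $v = \pm(d,-c)$ be the character vector along which $f^{\ell}$ has bounded deviation with bound $\kappa$. Choose $a,b \in \Z$ with $A := \begin{bmatrix} a & b \\ c & d \end{bmatrix} \in \mathrm{SL}(2,\Z)$ (possible since $\gcd(c,d)=1$). A direct matrix computation gives $A\cdot(d,-c) = (ad-bc, 0) = (1,0)$, so $A\cdot v = \pm(1,0)$. Since the definition of bounded deviation in \eqref{eq:bdc} depends only on $v^{\perp}$ up to sign, the directions $(1,0)$ and $(-1,0)$ are interchangeable. Applying Lemma~\ref{lem:bdc} to $f^{\ell} \in \cal{D}_{\kappa, \ell\overline{\omega}, v}$ then yields
\[
T_A f^{\ell} T_A^{-1} \in \cal{D}_{\|A\|\kappa,\, T_A(\ell\overline{\omega}),\, (1,0)}.
\]

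Second, iterate $q$ times. By the property $g^m \in \cal{D}_{\kappa', m\overline{\omega}', v'}$ stated in Section~\ref{sec: BDC and BMM}, we obtain
\[
T_A f^{\ell q} T_A^{-1} = (T_A f^{\ell} T_A^{-1})^{q} \in \cal{D}_{\|A\|\kappa,\, q\,T_A(\ell\overline{\omega}),\, (1,0)}.
\]
From $qA\cdot\omega + (0,p) = (\alpha,0)$ we have $A\cdot\omega = (\alpha/q, -p/q)$, hence $\ell q A\cdot\omega = (\ell\alpha, -\ell p)$. Since $\ell p \in \Z$, this gives $T_A(\ell q\overline{\omega}) = \pi(\ell\alpha,0)$.

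Finally, to replace $\alpha$ with $\beta$, recall $\beta = \|\alpha\|_{\T}$, so either $\alpha \equiv \beta \mod \Z$ or $\alpha \equiv -\beta \mod \Z$. In the first case, set $L := \ell q$; then $\pi(\ell\alpha,0) = \pi(\ell\beta,0)$ and we are done. In the second case, replace $A$ by $-A$: this still lies in $\mathrm{SL}(2,\Z)$ (since $\det(-A) = \det A = 1$ for $2\times 2$ matrices), satisfies $\|-A\| = \|A\|$, and sends $v$ to $\mp(1,0)$, so the whole argument goes through verbatim with $\alpha$ replaced by $-\alpha$, producing the rotation vector $\pi(-\ell\alpha,0) = \pi(\ell\beta,0)$. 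In either case we obtain $f' = T_A f^{L} T_A^{-1} \in \cal{D}_{\|A\|\kappa,\, \pi(\ell\beta, 0),\, (1,0)}$ with $L = \ell q$, as claimed. No genuine obstacle arises; the argument is essentially a change of coordinates and an iteration, and the only care needed is the sign adjustment between $\alpha$ and $\beta$.
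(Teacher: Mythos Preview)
Your proof is correct and follows essentially the same approach as the paper: both arguments amount to conjugating by $T_A$ (where $A$ takes the character vector to $(1,0)$) and iterating $q$ times, with the only difference being that you conjugate first and then iterate while the paper iterates first and then conjugates, and you handle the sign of $\alpha$ at the end by replacing $A$ with $-A$ whereas the paper fixes the sign of $v$ at the outset so that $\alpha \equiv \beta \bmod \Z$. Both are equivalent since $\mathcal{D}_{\kappa,\overline{\omega},v} = \mathcal{D}_{\kappa,\overline{\omega},-v}$.
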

\begin{proof} 
 We let $q,\beta$ be respectively  the character number and the character frequency of $\overline{\omega}$. We can choose a character vector $v$ of $\overline{\omega}$ such that, by setting $A, \alpha$ as above, we have $\alpha - \beta \in \Z$. By hypothesis,  $f^{\ell} \in \mathcal{\cD}_{\kappa, \ell \overline{\omega}, v}$, and as a result $f^{\ell q} \in \mathcal{\cD}_{\kappa, \ell  q \overline{\omega}, v}$. By Lemma \ref{lem:bdc}, we have that $f' :=T_{A} f^{\ell q} T_{A}^{-1} \in  \mathcal{\cD}_{\norm{A}\kappa,  \ell q T_A (\overline{\omega}), A \cdot v}$. Note that by the discussion above, $q T_A(\overline{\omega}) = (\alpha, 0) \mod \Z^2 = (\beta, 0) \mod \Z^2$ and $A \cdot v = (1,0) $. We conclude the proof by letting $L = \ell q$.
\end{proof}
\begin{rem} \label{ftof'}
For any $r \in \N \cup \{\infty\}$, if $f'$ in Lemma \ref{lem:Abdc} is $C^r$-rigid, then $f$ is also $C^r$-rigid. 

\end{rem}

\subsection{Rational approximations of an irrational vector}\label{draiv}\quad\par

%\begin{defn}\label{super-Liouville} The vector $\omega \in \R^2$ or $\R^2/\Z^2$ is said to be \emph{super-Liouvillean}  if 
%\begin{equation}\label{TSL}\liminf_{n\rightarrow+\infty} n^{-1}\ln \norm{n\omega}_{\T^2} =-\infty.
%\end{equation}
%\end{defn}

In order to consider the class of rotation vectors in the semi-irrational case in Theorem \ref{nonBrjunont}, i.e. strong non-Brjuno type, we introduce the following definitions.

 We recall that the sequence of denominators of the best rational approximations of  $\alpha\in\mathbb{R}\setminus\mathbb{Q}$, denoted by $\{q_n = q_n(\alpha) \}_{n\geq 0}\subset\mathbb{N}$ satisfies that $q_0=1$, and for each $n \in \N$ that
\begin{equation}\label{DBRA}
(1)\,\,q_n<q_{n+1},\quad (2)\,\norm{q_n\alpha}_{\T} \leq \norm{q\alpha}_{\T} \quad \forall 1\leq q< q_{n+1},\quad\text{and}\quad (3)\,\, \norm{q_n\alpha}_{\T} < \frac{1}{q_{n+1}}. 
 \end{equation}
We recall that an irrational number  $\alpha$ is of {\em Brjuno type} (see \cite{Brj,Yo})   if $\sum_{n=0}^{+\infty}\frac{\ln q_{n+1}}{q_n} < \infty$.
We say that $\alpha$ is of non-Brjuno type if it is not of Brjuno type.

 When $\omega\in \R^2\setminus\Z^2$ is an irrational vector, there is an analogous definition of Brjuno vectors in \cite{GL}. In our case where $\omega$ is a semi-irrational vector, we introduce the following definition which is stronger than the one in \cite{GL}, but it is more natural in our case. 
\begin{defn}\label{semi-irrational} 
We say that a semi-irrational vector $\omega\in \R^2$ or $\overline{\omega} \in \R^2/\Z^2$ is of {\em strong non-Brjuno type}  if $\cal{F}(\overline{\omega})$, the character frequency of $\overline{\omega}$,  is of non-Brjuno type.
\end{defn}
For a semi-irrational vector in $\R^2$,  the super-Liouvillean condition \eqref{eq:super-liouvullen} implies the strong non-Brjuno condition.

\subsection{Franks' Lemma}\quad\par

A \emph{free disk chain} for a homeomorphism $\tilde{f}$ of $\mathbb{R}^2$
is a finite set $b_i\,\,(i=1,2,\cdots,n)$ of homeomorphically embedded open disks in
$\mathbb{R}^2$ satisfying
\begin{enumerate}
                            \item $\tilde{f}(b_i)\cap
                            b_i=\emptyset$ for $1\leq i\leq
                            n$;
                            \item if $i\neq j$ then either $b_i=b_j$
                            or $b_i\cap b_j=\emptyset$;
                            \item for $1\leq i\leq
                            n$, there exists $m_i>0$ such that $\tilde{f}^{m_i}(b_i)\cap
                            b_{i+1}\neq\emptyset$.
                          \end{enumerate}
 We say that $\{b_i\}_{i=1}^n$ is a \emph{periodic free
disk chain} if $b_1=b_n$.
\smallskip

In \cite{F1}, J. Franks proved the following useful lemma about the
existence of fixed points of an orientation preserving homeomorphism
$\tilde{f}$ of $\mathbb{R}^2$ from Brouwer theory.

\begin{prop}[Franks' Lemma]\label{prop:Franks' Lemma}
Let $\tilde{f}: \mathbb{R}^2\rightarrow \mathbb{R}^2$ be an orientation
preserving homeomorphism which possesses a periodic free disk chain.
Then $\tilde{f}$ has at least one fixed point.
\end{prop}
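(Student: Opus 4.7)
The plan is to argue by contradiction using Brouwer's plane translation theorem. Assume $\tilde{f}$ has no fixed point. Brouwer's theorem guarantees, through every point of $\R^2$, a Brouwer line $L$---a proper topological embedding of $\R$ into $\R^2$ satisfying $\tilde{f}^n(L)\cap L=\emptyset$ for all $n\neq 0$, and such that $L$ separates $\tilde{f}(L)$ from $\tilde{f}^{-1}(L)$. A standard consequence of this structure is the \emph{non-recurrence lemma}: if $V\subset\R^2$ is any connected open set with $\tilde{f}(V)\cap V=\emptyset$, then $\tilde{f}^n(V)\cap V=\emptyset$ for every integer $n\geq 1$. (Sketch: by picking a Brouwer line adjacent to $V$, the image $\tilde{f}(V)$ lies in one open half-plane $U^+$, and one inductively pushes $\tilde{f}^n(V)\subset U^+$ for all $n\geq 1$.)

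Granted the non-recurrence lemma, the task reduces to fabricating a single connected open set $V$ that is free under $\tilde{f}$ but satisfies $\tilde{f}^M(V)\cap V\neq\emptyset$ for some $M>0$. From the periodic free disk chain $b_1,\ldots,b_n=b_1$, I would choose points $x_i\in b_i$ with $\tilde{f}^{m_i}(x_i)\in b_{i+1}$, and arcs $\alpha_i\subset b_i$ from $\tilde{f}^{m_{i-1}}(x_{i-1})$ to $x_i$ (indices modulo $n$; this uses $b_n=b_1$). Concatenating these arcs with the orbit excursions $\gamma_i=\{x_i,\tilde{f}(x_i),\ldots,\tilde{f}^{m_i-1}(x_i)\}$ produces a closed loop $\Gamma$ based at $x_1$. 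Define $V$ to be a sufficiently thin open tubular neighborhood of $\Gamma$, designed so that $V\cap b_i$ is contained in $b_i$ for each $i$ and $V$ tracks a thin neighborhood of each $\gamma_i$ in between. Setting $M=m_1+\cdots+m_n$, by construction $V$ is connected and $\tilde{f}^M(V)\cap V\neq\emptyset$.

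The main obstacle is to shrink the thickness of $V$ enough that $\tilde{f}(V)\cap V=\emptyset$. Inside each $b_i$ this uses condition (1) of the free disk chain ($\tilde{f}(b_i)\cap b_i=\emptyset$); along each orbit segment $\gamma_i$ it uses fixed-point-freeness, which makes consecutive iterates separated by a positive distance; and the pairwise disjointness of condition (2) prevents accidental interactions between the pieces sitting in different $b_i$'s. Once $V$ is shown to be connected, open, free under $\tilde{f}$, and recurrent after $M$ iterates, the non-recurrence lemma furnishes the desired contradiction, forcing $\tilde{f}$ to have a fixed point.
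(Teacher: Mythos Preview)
The paper does not give its own proof of this proposition; it is quoted from Franks' article \cite{F1} as a black-box input, so there is no argument in the paper to compare against. Your overall strategy---assume $\tilde f$ is fixed-point free, invoke the Brouwer non-recurrence lemma for free connected open sets, and manufacture a single free connected open set $V$ with $\tilde f^{M}(V)\cap V\neq\emptyset$---is indeed the skeleton of the standard proof.

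The genuine gap is in your construction of $V$. You describe $\Gamma$ as the concatenation of arcs $\alpha_i\subset b_i$ with the ``orbit excursions'' $\gamma_i=\{x_i,\tilde f(x_i),\dots,\tilde f^{m_i-1}(x_i)\}$. But each $\gamma_i$ is a finite set of points, not an arc, so $\Gamma$ is not a loop: it is a disjoint union of arcs and isolated points, and a thin tubular neighbourhood of it is disconnected. The non-recurrence lemma then says nothing. If you try to repair this by joining $\tilde f^{k}(x_i)$ to $\tilde f^{k+1}(x_i)$ by short auxiliary arcs $\beta_{i,k}$ in the plane, you lose control of freeness: there is no reason $\tilde f(\beta_{i,k})$ avoids $\beta_{i,k+1}$, and fixed-point-freeness (which only separates a \emph{point} from its image) gives no uniform separation of nearby arcs from their images. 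Conditions (1)--(3) on the chain say nothing about the plane outside the $b_i$, so shrinking the thickness of $V$ cannot force disjointness along these connectors.

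Franks' actual argument closes this gap with a further input from Brouwer theory: \emph{translation arcs}. For a Brouwer homeomorphism every point lies on an arc $\alpha$ from $x$ to $\tilde f(x)$ with $\tilde f(\alpha)\cap\alpha=\{\tilde f(x)\}$ and all further iterates pairwise disjoint; these, not arbitrary short arcs, are what one threads between the $b_i$ to build a free connected set (equivalently, one shows the relation ``$D\to D'$ iff $\tilde f^{m}(D)\cap D'\neq\emptyset$ for some $m>0$'' is a strict partial order on free disks, which immediately forbids a periodic chain). Either route requires Brouwer theory beyond the bare non-recurrence statement you invoke.
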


\section{Estimate on the displacement of a conservative pseudo-rotation of $\T^2$}\label{sec:proof}

In this section, we estimate  the maximal $C^0$ displacement of a conservative pseudo-rotation  of the two-torus with respect to its rotation vector when the homeomorphism satisfies certain boundedness condition.

Let $\T^2$ be endowed with the standard metric  induced by the Euclidean metric on $\mathbb{R}^2$, and denote by $\lambda$  the Lebesgue measure on $\mathbb{T}^2$. A measurable subset $F \subset \R^2$ is called a \textit{fundamental domain} under the action of $\Z^2$ if the union of $\{ F+v \mid v \in \Z^2 \}$ covers $\R^2$, and $F \cap (F+v) = \emptyset$ for any $v \in \Z^2 \setminus \{(0,0)\}$. A fundamental domain $F$ is bounded if ${\rm diam}(F) := \sup_{x,y \in F}\norm{x-y} < \infty$.
We say that a topological open disc $D$ of $\mathbb{T}^2$ is \emph{simple} with respect to a bounded fundamental domain  $F$, if there is a connected component  $\widetilde{D}$ of $\pi^{-1}(D)$ in $\mathbb{R}^2$ 
 such that $\widetilde{D}\subset F$. 
 
The key step in proving Theorem  \ref{sup} and Theorem \ref{nonBrjunont} is the following proposition:

\begin{prop}\label{lem:displaced disk b in bdc}Let $f$ be  an area-preserving pseudo-rotation of
$\mathbb{T}^2$ and $F \subset \R^2$ be a bounded fundamental domain. If there exist  $\kappa \geq0$ and $\tilde{f}$, a lift of $f$, with $\rho(\tilde{f})=\omega=(\omega_1,\omega_2) \in
\mathbb{R}^2\setminus\{(0,0)\}$, satisfying 
\eqref{eq:bdc} with $v= \omega$ (in particular, if $f$ has bounded mean motion with a bound $\kappa$), then any simple open disc $D$ with respect to $F$
such that $\lambda(D)>c(\kappa, F)\|\omega\|$ satisfies $f(D)\cap D\neq
\emptyset$, where $c(\kappa, F)= 8(\kappa+6\mathrm{diam}(F))$.
\end{prop}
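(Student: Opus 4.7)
The plan is a proof by contradiction. Suppose $f(D)\cap D=\emptyset$; lifting this to $\R^2$ gives $\tilde{f}(\tilde{D})\cap(\tilde{D}+v)=\emptyset$ for every $v\in\Z^2$, and in particular $\tilde{f}(\tilde{D})\cap\tilde{D}=\emptyset$. The bounded deviation hypothesis parallel to $\omega$ means the $\omega^\perp$-component of $\tilde{f}^n(z)-z$ stays within $\kappa$ of $0$ for every $z$ and every $n\geq 0$, so the entire forward orbit $\bigcup_{n\geq 0}\tilde{f}^n(\tilde{D})$ is contained in a strip $S$ around the $\omega$-axis through $\tilde{D}$, of $\omega^\perp$-width at most $2\kappa+\mathrm{diam}(F)$.

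Next I would combine $f(D)\cap D=\emptyset$ with area preservation by $f$ on $\T^2$ and a pigeonhole argument: the iterates $\{f^n(D)\}_{n\geq 0}$, each of area $\lambda(D)$, cannot be pairwise disjoint indefinitely, so there exists a smallest $N_0\geq 2$ with $f^{N_0}(D)\cap D\neq\emptyset$, and $N_0\leq\lceil 1/\lambda(D)\rceil$. The quantitative input $\lambda(D)>c(\kappa,F)\|\omega\|$ then forces $N_0\|\omega\|<1/c(\kappa,F)$. Lifting the non-empty intersection yields $v_0\in\Z^2$ with $\tilde{f}^{N_0}(\tilde{D})\cap(\tilde{D}+v_0)\neq\emptyset$, and the bounded deviation condition gives $|\langle v_0,\omega^\perp\rangle|\leq\kappa+\mathrm{diam}(F)$, so $v_0$ itself lies in the strip.

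The main step, and the principal obstacle, is to promote this to $v_0=0$. The idea is a careful area estimate inside a box $B_L\subset S$ of $\omega$-length $L$ proportional to $N_0\|\omega\|$: the iterates $\tilde{f}^n(\tilde{D})$ with $1\leq n\leq N_0-1$ together with all integer translates $\tilde{D}+v$ are pairwise disjoint in $B_L$ (using $f^k(D)\cap D=\emptyset$ for $1\leq k<N_0$), so their combined area is bounded above by $\lambda(B_L)$. Matching this against lower bounds obtained from area preservation---rewriting $\lambda(\tilde{f}^n(\tilde{D})\cap B_L)=\lambda(\tilde{D}\cap\tilde{f}^{-n}(B_L))$ and exploiting that the orbit of $\tilde{D}$ traverses $B_L$ in roughly $N_0$ iterations because of the average drift $\|\omega\|$---together with the equidistribution estimate $\lambda(B_L\cap\pi^{-1}(D))\approx\lambda(B_L)\lambda(D)$, produces an inequality whose right-hand side is linear in $\|\omega\|$; the numerical constant $c(\kappa,F)=8(\kappa+6\mathrm{diam}(F))$ is tailored to absorb all boundary corrections and the slack coming from the fact that the hypothesis only controls the $\omega^\perp$-spread of the iterates, and it forces the only allowed value to be $v_0=0$.

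Once $v_0=0$ is established we have $\tilde{f}^{N_0}(\tilde{D})\cap\tilde{D}\neq\emptyset$ with $N_0\geq 2$, while $\tilde{f}(\tilde{D})\cap\tilde{D}=\emptyset$. The pair $(b_1,b_2)=(\tilde{D},\tilde{D})$ with $m_1=N_0$ is thus a periodic free disk chain for $\tilde{f}$, and Franks' Lemma (Proposition~\ref{prop:Franks' Lemma}) supplies a fixed point $\tilde{z}$ of $\tilde{f}$. Since $f$ is a pseudo-rotation with $\rho(\tilde{f},w)=\omega$ for every $w\in\R^2$, existence of such $\tilde{z}$ forces $\omega=0$, contradicting $\omega\in\R^2\setminus\{(0,0)\}$ and completing the proof. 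The hard part is the quantitative area argument of paragraph three: because bounded deviation parallel to $\omega$ only controls the $\omega^\perp$-direction, the uncontrolled $\omega$-direction spread of each $\tilde{f}^n(\tilde{D})$ must be handled by an accurate accounting of boundary terms and by the orbit-traversal-time interpretation of area preservation, and the explicit constant $8(\kappa+6\mathrm{diam}(F))$ records exactly the slack needed to push this estimate through.
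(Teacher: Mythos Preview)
Your proposal has a genuine gap at the ``main step'': the claim that the area argument forces $v_0=0$ does not go through, and in fact it is aiming at the wrong target. The hypothesis is bounded deviation \emph{parallel to} $\omega$, i.e.\ only the $\omega^{\perp}$-component of $\tilde f^{n}(z)-z-n\omega$ is controlled; the $\omega$-component is completely unconstrained. Consequently $\tilde f^{N_0}(\tilde D)$ may sit over any lattice translate $\tilde D+v_0$ with $v_0$ an arbitrary integer point of the strip, and no packing estimate inside a box $B_L$ of $\omega$-length $L\sim N_0\|\omega\|$ can prevent this: the iterates $\tilde f^{n}(\tilde D)$ are not confined to $B_L$ (their $\omega$-drift is uncontrolled), and the rewriting $\lambda(\tilde f^{n}(\tilde D)\cap B_L)=\lambda(\tilde D\cap\tilde f^{-n}(B_L))$ does not help since $\tilde f^{-n}(B_L)$ is just as uncontrolled in the $\omega$-direction. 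More conceptually, you are trying to force a periodic free disk chain and then invoke Franks' Lemma to reach a contradiction; but since $\omega\neq 0$, Franks' Lemma tells you precisely that such a chain \emph{cannot} exist, so the first return displacement $v_0$ will never be $0$.

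The paper's argument runs Franks' Lemma in the opposite direction and is ergodic-theoretic rather than topological. One works with the first-return map $f_D$ and the lattice displacements $l_D(z)\in\Z^2$ recording which translate of $\tilde D$ the orbit hits. Franks' Lemma (applied contrapositively, using $\omega\neq 0$) guarantees that the partial sums $\sum_{i<k}l_D(f_D^{i}(z))$ are pairwise distinct nonzero lattice points in the strip $R_{\omega,\kappa+2\,\mathrm{diam}(F)}$. A pigeonhole count on lattice points in a rectangle of this strip then shows that within a bounded number $K_{\tilde f}$ of returns the $\omega$-component of the partial sum must exceed a fixed threshold $L_{\tilde f}$. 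Dividing by the rotation speed $\|\omega\|$ converts this into a uniform lower bound $\liminf_N \frac{1}{N}\sum_{i<N}n_D(f_D^{i}(z))\geq (c(\kappa,F)\|\omega\|)^{-1}$ for a.e.\ $z\in D$. Kac's Lemma $\int_D n_D\,d\lambda\leq 1$ together with Fatou then yields $\lambda(D)\leq c(\kappa,F)\|\omega\|$. The constant $8(\kappa+6\,\mathrm{diam}(F))$ arises from the lattice-point count in the strip, not from the box-packing you sketch.
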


Before proving Proposition \ref{lem:displaced disk b in bdc}, we first state an immediate corollary. We note that for any $x \in \R^2$ and any $r \in (0,1/2)$, the set $F_{x} = x + [-\frac{1}{2}, \frac{1}{2})^2$ is a bounded fundamental domain containing $B(x,r) \subset \R^2$. Note that $c(\kappa) := \sup_{x \in \R^2}c(\kappa, F_{x}) < 8(\kappa + 12)$ for any $\kappa \geq 0$.
\begin{cor}\label{cor}
Let $f$ be an  area-preserving pseudo-rotation of
$\mathbb{T}^2$. Assume that for $\kappa\geq0$, $\omega \in \R^2$ with $\norm{\omega} < 1/(2c(\kappa))$, a  lift of $f$, denoted by $\tilde{f}$, satisfies that
$\rho(\tilde{f})=\omega \in \mathbb{R}^2\setminus\{(0,0)\}$ and \eqref{eq:bdc} with $v= \omega$ (in particular, if $f$ has bounded mean motion with a bound $\kappa$),  then
$$d_{C^0}(f,\mathrm{Id}_{\mathbb{T}^2}) \leq c(\kappa)^{\frac{1}{2}}\|\omega\|^{\frac{1}{2}}+\max_{z\in\mathbb{T}^2}\mathrm{diam}(f(B(z,c(\kappa)^{\frac{1}{2}}\|\omega\|^{\frac{1}{2}})),$$
where  $B(z,r)$ is the open disc centered at $z$ with radius $r$.
\end{cor}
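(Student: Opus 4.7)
\emph{Plan.} The strategy is to apply Proposition~\ref{lem:displaced disk b in bdc} to a small round disc centred at an arbitrary point and convert the conclusion ``$f(D) \cap D \neq \emptyset$'' into a pointwise bound on $d(z_0, f(z_0))$ via the triangle inequality.

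First I would fix an arbitrary $z_0 \in \T^2$ and set $r := c(\kappa)^{1/2}\|\omega\|^{1/2}$. The hypothesis $\|\omega\| < 1/(2c(\kappa))$ forces $r$ to be sufficiently small so that, for any lift $\tilde z_0 \in \R^2$ of $z_0$, the Euclidean ball $B(\tilde z_0, r)$ is contained in the fundamental domain $F_{\tilde z_0} := \tilde z_0 + [-\tfrac12, \tfrac12)^2$; thus the open disc $D := B(z_0, r) \subset \T^2$ is simple with respect to $F_{\tilde z_0}$. Next I verify the area hypothesis of Proposition~\ref{lem:displaced disk b in bdc}: since $c(\kappa, F_{\tilde z_0}) \leq c(\kappa)$ by the definition of $c(\kappa)$, one has
\[
\lambda(D) \;=\; \pi r^2 \;>\; r^2 \;=\; c(\kappa)\|\omega\| \;\geq\; c(\kappa, F_{\tilde z_0})\|\omega\|.
\]
Together with the hypothesis $\rho(\tilde f) = \omega$ and the bounded deviation condition parallel to $\omega$ carried over from the statement, Proposition~\ref{lem:displaced disk b in bdc} yields $f(D) \cap D \neq \emptyset$.

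Finally I choose any $y \in D$ with $f(y) \in D$, so $d(z_0, y) < r$ and $d(z_0, f(y)) < r$. The triangle inequality then gives
\[
d(z_0, f(z_0)) \;\leq\; d(z_0, f(y)) + d(f(y), f(z_0)) \;<\; r + \mathrm{diam}(f(B(z_0, r))),
\]
where the second summand uses that both $y$ and $z_0$ lie in $B(z_0, r)$. Taking the supremum over $z_0 \in \T^2$ produces the claimed estimate
\[
d_{C^0}(f, \Id_{\T^2}) \;\leq\; c(\kappa)^{1/2}\|\omega\|^{1/2} + \max_{z \in \T^2} \mathrm{diam}\bigl(f(B(z, c(\kappa)^{1/2}\|\omega\|^{1/2}))\bigr).
\]
The only nontrivial step is the application of Proposition~\ref{lem:displaced disk b in bdc}; once that is in hand, the remainder is a routine triangle inequality. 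Accordingly the ``main obstacle'' in this corollary has already been handled by the preceding proposition, and no further work beyond bookkeeping on $r$ and the fundamental domain is required.
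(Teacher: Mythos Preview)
Your proof is correct and follows essentially the same approach as the paper's own argument: set $r = (c(\kappa)\|\omega\|)^{1/2}$, apply Proposition~\ref{lem:displaced disk b in bdc} to the round disc $B(z_0,r)$ via the fundamental domain $F_{\tilde z_0} = \tilde z_0 + [-\tfrac12,\tfrac12)^2$, and then convert $f(D)\cap D \neq \emptyset$ into a displacement bound by the triangle inequality. The paper's proof is simply terser, leaving the triangle-inequality step implicit.
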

\begin{proof}
Set $r = (c(\kappa)\norm{\omega})^{1/2} \in (0,1/2)$. Then for any $x \in \R^2$, by $\lambda(B(\pi(x),r)) > r^2 \geq c(\kappa,F_{x})\norm{\omega}$, and by Proposition \ref{lem:displaced disk b in bdc}, we have $f(B(\pi(x),r)) \cap B(\pi(x), r) \neq \emptyset$. The corollary then follows as an immediate consequence.
\end{proof}

\begin{proof}[Proof of Proposition \ref{lem:displaced disk b in bdc}]
Without loss of generality, we can assume that $(0,0) \in F$, for otherwise we can replace $F$ by the unique $\Z^2$ translation of $F$ which contains $(0,0)$.

Let $D \subset \T^2$ be a simple open disc with respect to $F$ such that $f(D) \cap D = \emptyset$, and let $\widetilde{D}$ be the connected component of $\pi^{-1}(D)$ contained in $F$. We denote by $\mathrm{Rec}^+(f)$ the set of positively recurrent points of $f$, i.e. $ x \in \mathrm{Rec}^{+}(f) \iff \liminf_{n \geq 1}d(f^{n}(x), x) = 0$. Note that $\lambda(\mathrm{Rec}^+(f))=\lambda(\T^2)=1$ by Poincar\'e's recurrence theorem. For every $z\in \mathrm{Rec}^+(f)\cap D$, we have  $n_D(z):=\min\{n\geq 1 \mid f^n(z)\in D\} < \infty$,  and we define $f_D(z):=f^{n_D(z)}(z)$.  It is direct to see that $\mathrm{Rec}^+(f)\cap D$ is invariant under $f_D$.

We let $l_D(z)$ be the unique lattice point in $\mathbb{Z}^2$  such that $\tilde{f}^{n_D(z)}(\widetilde{z})\in l_D(z)+\widetilde{D}$, where $\widetilde{z}$ is the unique preimage of $z$ under $\pi$ in $\widetilde{D}$. Then it is clear that $\tilde{f}^{n_D(z)}(\widetilde{z}) = l_D(z) + \widetilde{z}'$,
where $\widetilde{z}'$ is the unique preimage of $f_D(z)$ in $\widetilde{D}$.
By successive applications of the above equality, we see that for every $z\in \mathrm{Rec}^+(f)\cap D$ and every integer $N\geq 0$, 
 \ary \label{lattice esmate0}
 \tilde{f}^{\sum_{i=0}^{N-1}n_D(f_D^i(z)}(\widetilde{z}) = \sum_{i=0}^{N-1}l_D(f_D^i(z)) + \widetilde{z}_N
 \eary 
 where $\widetilde{z}_N$ is the unique preimage of $f_D^{N}(z)$ under $\pi$ in $\widetilde{D}$. Thus
\begin{comment}
 Indeed, \eqref{lattice esmate0} is true for $N=0$ since $\widetilde{z}_0 = \widetilde{z}$ by definition. Assume that \eqref{lattice esmate0} is true for some $N \geq 0$, then  by applying $\tilde{f}^{n_D(f_D^{N}(z))}$ to both sides of \eqref{lattice esmate0}, and since
 \aryst
 \tilde{f}^{n_D(f_D^{N}(z))}( \sum_{i=0}^{N-1}l_D(f_D^i(z)) + \tilde{z}_N) = \sum_{i=0}^{N-1}l_D(f_D^i(z)) + \tilde{f}^{n_D(f_D^{N}(z))}(\widetilde{z}_N) =  \sum_{i=0}^{N}l_D(f_D^i(z)) + \widetilde{z}_{N+1}. 
 \earyst
\end{comment}
\begin{equation}\label{lattice esmate}
\left\| \tilde{f}^{\,\sum_{i=0}^{N-1}n_D(f_D^i(z))}(\widetilde{z})-\sum_{i=0}^{N-1}l_D(f_D^i(z)) \right\| \leq \mathrm{diam}(F),
\end{equation}
and by $\rho(\tilde{f})= \omega$, we have
\begin{equation}\label{rotation vector}
    \frac{\sum_{i=0}^{N-1}l_D(f_D^i(z))}{\sum_{i=0}^{N-1}n_D(f_D^i(z))}
    -\omega=\delta_N(z)
\end{equation}
for every $z\in \mathrm{Rec}^+(f) \cap D$, where
$\|\delta_N(z)\|\rightarrow 0$ as
$N\rightarrow +\infty$. We also note that the limit $$\lim\limits_{N\rightarrow+\infty}\frac{1}{N}\sum_{i=0}^{N-1}n_D(f_D^i(z))$$ exists  for $\lambda$-a.e. $z\in D$ by Birkhoff's ergodic theorem and sine $f_D$ preserves $\lambda|_{D}$.

For every $z\in\mathbb{R}^2$, we define the  linear form $\phi(z)=\langle z, \frac{\omega}{\norm{\omega}}\rangle,$ and for any $n\geq 1$, the following Birkhoff sum  $$\phi_n(z)=\sum_{i=0}^{n-1}\phi(l_D(f_D^i(z))).$$
For every $r>0$, we define the following strip
$$R_{\omega,r}=\{z\in\mathbb{R}^2\mid d(z, \R \omega) \leq r\}.$$
%where $\|\cdot\|_{\infty}$ is the super norm on $\mathbb{R}^2$, i.e., $\|(z_1,z_2)\|_{\infty}=\max\{|z_1|,|z_2|\}$.

Let  $\mathrm{Orb}^+(\tilde{f},z)=\{\tilde{f}^n(z)\mid n\geq 0\}$. By the bounded deviation condition \eqref{eq:bdc}, and by (\ref{lattice esmate}), we have
\ary\label{sdf}
\bigcup_{z\in F}\mathrm{Orb}^+(\tilde{f},z)\subset
R_{\omega,\kappa+\mathrm{diam}(F)}
\mbox{ and     } S_D(\tilde{f})\subset
R_{\omega, \kappa+2\mathrm{diam}(F)},
\eary
where
$$S_D(\tilde{f})=\left\{\sum\limits_{i=0}^{k-1}l_D(f_D^i(z))\mid z\in \mathrm{Rec}^+(f)\cap
D, k\geq1\right\}.$$

As $\rho(\tilde{f})=\omega$,  we claim that $\inf\phi(S_D(\tilde{f})) > -\infty$.  Indeed, by (\ref{lattice esmate}), we have that $\inf\phi(S_D(\tilde{f}))\geq \inf\phi(\bigcup_{z\in F}\mathrm{Orb}^+(\tilde{f},z))-\mathrm{diam}(F)$.  Suppose that there exist a sequence $\{z_i\}_{i\geq1}$ in $F$, and a sequence $\{n_i\}_{i\geq1}$ in $\mathbb{N}$ such that $\phi(\tilde{f}^{n_i}(z_i))\rightarrow -\infty$ as $i\rightarrow+\infty$, then $\phi(\frac{\tilde{f}^{n_i}(z_i)-z_i}{n_i})\leq0<\phi(\omega)=\|\omega\|$ for all sufficiently large $i$. This contradicts with the fact that $\rho(\tilde{f})=\omega$ and that $\phi$ is continuous. 

We define 
\begin{equation*}\label{LF}
L_{\tilde{f}}=\max\left\{-\inf\phi(S_D(\tilde{f})),2\right\} \text{ and } R_{\tilde{f}}=\left\{z\in R_{\omega, \kappa+2\mathrm{diam}(F)}\mid |\phi(z)|\leq L_{\tilde{f}}\right\}.
\end{equation*}

It is direct to see that $\mathrm{diam}(F)>1/2$. Then we have
 \begin{equation}\label{lattice}
 \#\left(R_{\tilde{f}}\cap\mathbb{Z}^2\right)< [2(L_{\tilde{f}}+2)]\cdot[2(\kappa+2\mathrm{diam}(F)+2)] < 4(L_{\tilde{f}}+2)(\kappa+6\mathrm{diam}(F)).
 \end{equation}
Indeed, let $R'$ be the union of the disjoint unit squares of the form $[0,1)^2+v$ where $v \in R_{\tilde{f}} \cap\Z^2$. Then $R'$ is contained in a larger closed rectangle $R''$ with  edge lengths  $2(L_{\tilde{f}}+2)$ and $2(\kappa+2{\rm diam}(F) + 2)$. Then (\ref{lattice}) follows from the fact that the left hand side of \eqref{lattice} does not exceed the area $|R'| \leq |R''|$.

For a real number $a$, let $\lfloor a \rfloor=\max\{n\in\Z\mid a-n\geq 0\}$. Set $K_{\tilde{f}}=\lfloor4(L_{\tilde{f}}+2)(\kappa+6\mathrm{diam}(F))\rfloor$.  We claim that, for every $z\in \mathrm{Rec}^+(f)\cap D$, there exists an integer $k_z\in[1,K_{\tilde{f}}]$ such that 
\begin{equation}\label{2kappa}\left|\phi(\sum_{i=0}^{k_z-1}l_D(f_D^i(z)))\right|> L_{\tilde{f}}.\end{equation}

To prove (\ref{2kappa}), we first show that $$\sum_{i=0}^{k-1}l_D(f_D^i(z))\in R_{\omega, \kappa+2\mathrm{diam}(F)}\setminus \{(0,0)\} \text{ for any }k\geq 1.$$ By \eqref{lattice esmate} and \eqref{sdf}, we only need to prove that $\sum_{i=0}^{k-1}l_D(f_D^i(z))\neq (0,0)$ for any $k\geq1$. Assume to the contrary that $k$ is a positive integer with $\sum_{i=0}^{k-1}l_D(f_D^i(z))= (0,0)$. Then the following disks $$\widetilde{D}, \tilde{f}(\widetilde{D}),\cdots,\tilde{f}^{\,\sum_{i=0}^{k-1}n_D(f^i_D(z))}(\widetilde{D})$$ must contain a periodic free disk chain. Then $\tilde{f}$ would have a fixed point by Franks' Lemma (see Proposition \ref{prop:Franks' Lemma}) which contradicts with the fact that $\omega\neq(0,0)$. Moreover, we claim that $$\sum_{i=0}^{k-1}l_D(f_D^i(z))\neq\sum_{i=0}^{k'-1}l_D(f_D^i(z)) \text{ for any }k'>k\geq 1.$$ Indeed, assume to the contrary that the pair $(k,k')$ satisfies the above
property. Then the following disks $$\tilde{f}^{\,\sum_{i=0}^{k-1}n_D(f^i_D(z))}(\widetilde{D}),\tilde{f}^{\,\sum_{i=0}^{k-1}n_D(f^i_D(z))+1}(\widetilde{D}),\cdots,\tilde{f}^{\,\sum_{i=0}^{k'-1}n_D(f^i_D(z))}(\widetilde{D})$$ must contain a periodic free disk chain which again contradicts with the fact $\omega\neq(0,0)$ by Franks' Lemma. Finally, the claim  (\ref{2kappa}) follows from  $\#\left(R_{\tilde{f}}\cap\mathbb{Z}^2\right)\leq K_{\tilde{f}}$ and the pigeonhole principle.

For every $z\in \mathrm{Rec}^+(f)\cap
D$, by the definition of $L_{\tilde{f}}$ and (\ref{2kappa}) above, there exists an integer $k_z\in[1,K_{\tilde{f}}]$ such that
\begin{equation}\label{inqphi}
\phi_{k_z}(z)>L_{\tilde{f}}.
\end{equation}
By \eqref{rotation vector} and the definition of $\phi$,   we have for each $m\in\mathbb{N}$ that
\begin{equation}\label{rotation vector phi}
   \frac{\frac{1}{mK_{\tilde{f}}}\sum\limits_{i=0}^{mK_{\tilde{f}}-1}\phi(l_D(f_D^i(z)))} {\frac{1}{mK_{\tilde{f}}}\sum\limits_{i=0}^{mK_{\tilde{f}}-1}n_D(f_D^i(z))}
    -\|\omega\|=\phi(\delta_{mK_{\tilde{f}}}(z)).
\end{equation}
On the other hand, by \eqref{inqphi} there exist sequences $\{z_j\}_{j=1}^n\subset D$ and  $\{k_j\}_{j=1}^n\subset\mathbb{N}$ such that
\begin{itemize}
\item $n\geq m$; 
\item $(m-1)K_{\tilde{f}}< \sum_{j=1}^n k_j\leq mK_{\tilde{f}}$;
\item $z_1=z$ and $z_{j+1}=f_D^{k_j}(z_j)$ for $1\leq j \leq n$;
\item $\phi_{k_j}(z_j)>L_{\tilde{f}}$.
\end{itemize}
Therefore,
\ary% \nonumber to remove numbering (before each equation)
  \frac{1}{mK_{\tilde{f}}}\sum_{i=0}^{mK_{\tilde{f}}-1}\phi(l_D(f_D^i(z))) &=&\frac{1}{mK_{\tilde{f}}}\left\{\sum_{j=1}^n\phi_{k_j}(z_j)+\sum_{i=0}^{mK_{\tilde{f}}-\sum_{j=1}^nk_j-1}\phi(l_D(f_D^i(z_{n+1})))\right\} \nonumber \\
 \label{lfkf} &>&\frac{L_{\tilde{f}}}{K_{\tilde{f}}}+\frac{1}{mK_{\tilde{f}}}\left\{\sum_{i=0}^{mK_{\tilde{f}}-\sum_{j=1}^nk_j-1}\phi(l_D(f_D^i(z_{n+1})))\right\} \geq  \frac{L_{\tilde{f}}}{K_{\tilde{f}}}  -\frac{L_{\tilde{f}}}{mK_{\tilde{f}}},
\eary
where the last inequality follows from the definition of $L_{\tilde{f}}$.
By \eqref{rotation vector phi}, \eqref{lfkf}, and by letting $m$ tends to infinite, we obtain
\begin{equation*}\liminf_{N\rightarrow+\infty}\frac{1}{N}\sum_{i=0}^{N-1}n_D(f_D^i(z))\geq \frac{L_{\tilde{f}}}{K_{\tilde{f}}}\cdot\frac{1}{\|\omega\|}\geq \frac{L_{\tilde{f}}}{4(L_{\tilde{f}}+2)(\kappa+6\mathrm{diam}(F))}\cdot\frac{1}{\|\omega\|}.\end{equation*}
As $L_{\tilde{f}}\geq 2$, we have $$\frac{L_{\tilde{f}}}{4(L_{\tilde{f}}+2)(\kappa+6\mathrm{diam}(F))}\geq\frac{1}{8(\kappa+6\mathrm{diam}(F))}.$$
Recall that $c(\kappa, F)= 8(\kappa+6\mathrm{diam}(F))$.  Then 
\begin{equation}\label{inqlem2}\liminf_{N\rightarrow+\infty}\frac{1}{N}\sum_{i=0}^{N-1}n_D(f_D^i(z))\geq \frac{1}{c(\kappa, F)\|\omega\|}\quad \text{ for } \lambda\text{-a.e. } z\in D.\end{equation}
By Kac's Lemma, we have
\begin{equation}\label{kaclemma} \int_{D}n_D(z)\mathrm{d}\lambda=\lambda\left(\bigcup_{n\geq0}f^n(D)\right)\leq1.
\end{equation}
 Then Proposition
\ref{lem:displaced disk b in bdc} follows from (\ref{inqlem2}), (\ref{kaclemma}),  and Fatou's Lemma:
\begin{equation*}
\frac{\lambda(D)}{c(\kappa,F)\|\omega\|}=\int_D\frac{1}{c(\kappa,F)\|\omega\|}\mathrm{d}\lambda\leq \int_D\liminf_{N\rightarrow+\infty}\frac{1}{N}\sum_{i=0}^{N-1}n_D(f_D^i(z))\mathrm{d}\lambda\leq1.
\end{equation*}
\end{proof}

\section{Proofs of Theorem  \ref{sup} and Theorem \ref{nonBrjunont}} \label{sec:thmsupnonbrju}

Based on Corollary \ref{cor} in Section \ref{sec:proof}, we are ready to prove Theorem  \ref{sup} and Theorem \ref{nonBrjunont}. Our proof follows closely that of \cite[Theorem 1 and Theorem 2]{A}.

\subsection{Proof of Theorem \ref{sup}}\quad\smallskip

As $f$ is H\"{o}lder with exponent $a$, there is  $C>0$ such that for every $m\in\mathbb{N}$,  
\begin{equation*}\label{holder}
\|f^m(x)-f^m(y)\|\leq C^m \|x-y\|^{a^m}.
\end{equation*}
 \begin{proof}[Proof under condition $(1)$] 
By hypothesis, there exists $\kappa > 0$ such that $f\in\mathcal{C}_{\kappa,\overline{\omega}}$, where $\overline{\omega}=\omega \mod \Z^2 \in \R^2/\Z^2$. 
From the arithmetic condition (\ref{eq:a-super-liouvullen}), %$$\limsup \limits_{n\rightarrow+\infty} n^{-1}a^{n}\ln \norm{n\omega}_{\T^2}= -\infty,$$ 
we obtain a subsequence $\{n_j\}_{j\geq0}$ in $\N$  satisfying
 \begin{equation}\label{termrigiditysuperliouv} 
 C^{n_j}\cdot \norm{n_j\omega}_{\T^2}^{\frac{a^{n_j}}{4}}\rightarrow 0,\quad \text{ as } j\rightarrow +\infty.
 \end{equation}
Since $f \in \cal{C}_{\kappa, \overline{\omega}}$, for each $j \geq 0$ there exists a lift $\tilde{f}_j$ of $f^{n_j}$ such that $\norm{\rho(\tilde{f}_j)} = \norm{\rho(f^{n_j})}_{\T^2} =\norm{n_j\omega}_{\T^2}$. Note that we have $f^{n_j} \in \cal{C}_{\kappa, n_j\overline{\omega}}$.
By applying Corollary \ref{cor} to the pseudo-rotation $f^{n_j}$ and the lift $\tilde{f}_j$,
 and by the facts that:
(1) $2 c(\kappa)^{\frac{1}{2}}\norm{n_j\omega}_{\T^2}^{\frac{1}{2}}<\norm{n_j\omega}_{\T^2}^{\frac{1}{4}}$ for all  sufficiently large $j$; and  (2) $f^{n_j}\in\mathcal{C}_{\kappa,n_j\overline{\omega}}$ for any $j\in \N$,  
we obtain
\begin{equation*}\label{holder i}
d_{C^0}(f^{n_j},\mathrm{Id}_{\mathbb{T}^2}) \leq c(\kappa)^{\frac{1}{2}}\norm{n_j\omega}_{\T^2}^{\frac{1}{2}}+C^{n_j}\left(2c(\kappa)^{\frac{1}{2}}\norm{n_j\omega}_{\T^2}^{\frac{1}{2}}\right)^{a^{n_j}} < \norm{n_j\omega}_{\T^2}^{\frac{1}{4}}+ C^{n_j}\norm{n_j\omega}_{\T^2}^{\frac{a^{n_j}}{4}}.
\end{equation*} 
We conclude the proof by \eqref{termrigiditysuperliouv}.
\end{proof}

\begin{proof}[Proof under condition $(2)$]
We can apply Lemma \ref{lem:Abdc} to $f$, to obtain $A \in \mathrm{SL}(2,\Z), L \in \N$ and 
$$f' := T_A f^{L} T_A^{-1} \in \mathcal{D}_{\kappa', (\gamma, 0), (1,0)}$$
 for $\kappa' = \norm{A} \kappa$, and some $\gamma \in \R/\Z$ with $(\gamma,0) = LT_A(\overline{\omega})$.
By  Remark \ref{ftof'}, it is enough to show that $f'$ is $C^0$-rigid. By \eqref{eq:a-super-liouvullen}, we obtain
\aryst
\liminf_{n\rightarrow+\infty} n^{-1}a^{n}\ln \norm{n\rho(f')}_{\T^2} \leq \liminf_{n\rightarrow+\infty} n^{-1}a^{n}\ln (L \norm{A}\norm{n\overline{\omega}}_{\T^2}) = -\infty.
\earyst
Thus without loss of generality, it is enough to prove Theorem \ref{sup} assuming that  $\rho(f) = (\gamma, 0)$ for some $\gamma \in (\R\setminus \Q)/\Z $, and $f \in \mathcal{D}_{\kappa, (\gamma, 0), (1,0)}$ for some $\kappa > 0$. We now proceed with the proof under such assumption.

Note that for any $n \geq 0$, we have $f^{n} \in \mathcal{D}_{\kappa, \pi(n\gamma,0), (1,0)}$; and there exists a lift of $f^{n}$, denoted by $\tilde{f}_n$, such that $\rho(\tilde{f}_n)$ is parallel to $(1,0)$ and $\norm{\rho(\tilde{f}_n)} = \norm{n \rho(f)}_{\T^2} = \norm{n\gamma}_{\T}$. Thus we can apply \eqref{eq:a-super-liouvullen} to find sequence $\{n_j\}_{j \geq 0} \subset \N$ as in the proof above under condition (1). Then we apply Corollary \ref{cor} to $(f^{n_{j}}, \tilde{f}_{n_j})$ in place of $(f, \tilde{f})$ to conclude the $C^{0}$-rigidity of $f$ as in the proof under (1).
 
\end{proof}

\subsection{Proof of Theorem \ref{nonBrjunont}}\label{proof of nb}\quad\smallskip

Without loss of generality, we will assume that $r \geq 2$ is finite. The statement for $r = \infty$ follows as an immediate consequence.
To prove Theorem \ref{nonBrjunont}, we need the following two results in \cite{A}: the first one is the growth of the denominators of a non-Brjuno type  irrational number, and the second one is a growth gap theorem (see also \cite{PolterovichSodin}) for area-preserving $C^2$-diffeomorphisms of compact surfaces.

\begin{lem}[Lemma 3.2, \cite{A}]\label{lem}
 Suppose that $\alpha\in \mathbb{R}\setminus\mathbb{Q}$ is an irrational non-Brjuno number. For any $H>1$, there exists a subsequence $\{q_{n_j}\}_{j\geq1}$ of the sequence $\{q_n(\omega)\}_{n\geq0}$ such that $q_{n_{j+1}}\geq H^{q_{n_j}}$ and there exists an infinite set $\mathcal{J}$ such that 
\begin{equation}\label{ac}
\text{for any } j\in\mathcal{J}, \quad \norm{q_{n_j}\alpha}_{\T}<e^{-\frac{q_{n_j}}{j^2}}.\end{equation}
\end{lem}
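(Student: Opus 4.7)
Writing $a_n := (\log q_{n+1})/q_n$, the non-Brjuno hypothesis on $\alpha$ reads $\sum_{n \geq 0} a_n = +\infty$. By the classical estimate $\norm{q_n\alpha}_{\T} < 1/q_{n+1} = e^{-a_n q_n}$, the target inequality \eqref{ac} at index $j$ is implied by the single scalar condition $a_{n_j} > 1/j^2$. The plan is therefore to construct the subsequence $\{n_j\}$ greedily so as to force the growth condition built into the lemma, and to take $\mathcal{J}$ to be the (a priori candidate) set of indices where $a_{n_j} > 1/j^2$.

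Explicitly, I would set $n_1 = 1$ and, inductively, let $n_{j+1}$ be the smallest integer $m > n_j$ with $q_m \geq H^{q_{n_j}}$. Such $m$ exists because $q_n \to +\infty$, and the required growth $q_{n_{j+1}} \geq H^{q_{n_j}}$ holds by construction. Setting $\mathcal{J} := \{j \geq 1 : a_{n_j} > 1/j^2\}$ reduces the lemma to proving that $\mathcal{J}$ is infinite. I would argue this by contradiction: if $\mathcal{J}$ is finite, then there exists $J_0$ such that $a_{n_j} \leq 1/j^2$ for all $j \geq J_0$. Combining this with the minimality built into the construction (which forces $q_m < H^{q_{n_j}}$, and hence the pointwise bound $a_m \leq (\log H)\, q_{n_j}/q_m$, for every $m \in (n_j, n_{j+1})$), one estimates each block sum $\sum_{m = n_j}^{n_{j+1}-1} a_m$ to obtain a bound that should be summable in $j$. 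Summing over $j \geq J_0$ would then yield $\sum_{m \geq n_{J_0}} a_m < +\infty$, contradicting the non-Brjuno hypothesis.

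The main obstacle is the careful bookkeeping in this last estimate: one must balance the pointwise bound on $a_m$ (tight at the starting index $n_j$ of each block where $q_m = q_{n_j}$, but weaker at the end-index $n_{j+1}-1$ where the large $q$-jump occurs) against the a priori unbounded block length $n_{j+1} - n_j$. The decisive input is the super-exponential rate imposed by $q_{n_{j+1}} \geq H^{q_{n_j}}$: within each block, once the small start-of-block bound $a_{n_j} \leq 1/j^2$ is taken into account, the $q_m$ are forced to grow through an iterated-exponential trajectory, so that $\sum_{m \in [n_j, n_{j+1})} q_{n_j}/q_m$ is controlled by a geometric series decaying fast enough in $j$ to override the possibly growing factor $n_{j+1}-n_j$, yielding the required convergence.
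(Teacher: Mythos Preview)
The paper does not give its own proof of this lemma; it is quoted verbatim from \cite{A}, so there is nothing in the paper to compare against beyond checking whether your argument is correct.

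Your proposal contains a genuine gap. The pointwise bound you invoke, $a_m \leq (\log H)\,q_{n_j}/q_m$ for $m\in(n_j,n_{j+1})$, would require $\log q_{m+1}\leq(\log H)\,q_{n_j}$, i.e.\ $q_{m+1}\leq H^{q_{n_j}}$. Minimality of $n_{j+1}$ only gives $q_m<H^{q_{n_j}}$ for $m<n_{j+1}$; at the final index $m=n_{j+1}-1$ one has $q_{m+1}=q_{n_{j+1}}\geq H^{q_{n_j}}$, with no upper control whatsoever. More fundamentally, even on the range where the bound is valid, the block contribution $(\log H)\,q_{n_j}\sum_{m>n_j}q_m^{-1}$ is of order $(\log H)\cdot q_{n_j}/q_{n_j+1}$, and under your standing hypothesis $a_{n_j}\leq 1/j^2$ the next partial quotient can equal $1$, giving $q_{n_j+1}<2q_{n_j}$. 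Thus the block sums are bounded \emph{below} by a positive constant independent of $j$, and summing them cannot yield $\sum a_m<\infty$.

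This is not a bookkeeping issue but a structural one. Take $\alpha$ with partial quotients alternating between $1$ and very large integers, the large ones chosen so that $q_{2k+1}\geq H^{q_{2k-1}}$. Then $a_{2k}=(\log q_{2k+1})/q_{2k}>(\log H)/2$ for every $k$, so $\alpha$ is non-Brjuno; yet your greedy sequence starting at $n_1=1$ is exactly $n_j=2j-1$, and there $a_{n_j}=a_{2j-1}=(\log q_{2j})/q_{2j-1}<(\log 2q_{2j-1})/q_{2j-1}\to 0$ far faster than $1/j^2$, so $\mathcal{J}$ is finite. The greedy rule systematically lands just \emph{after} each large jump, where the forward ratio $a_{n_j}$ is tiny, while the large $a_m$ sits one index earlier. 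A correct argument must build the subsequence so as to capture those jump indices (for instance by selecting within each greedy block the index where $a_m$ is maximal, and then re-verifying the growth condition for the modified sequence); the contradiction cannot be obtained from the greedy endpoints alone.
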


%\begin{proof}
%The proof of the case of semi-irrational is  the same as the proof of Lemma 3.2 in \cite{A}. We only provide the proof of the case where $\omega$ is totally irrational non-Brjuno vector. 
%
%
%Let $q_{m_j}$ be such that $q_{m_{j+1}}-1<H^{q_{m_j}}$ while $q_{m_{j+1}}\geq H^{q_{m_j}}$. The non-Brjuno condition implies that for an infinite set $\mathcal{I}\subset \N$ we have that \begin{equation}\label{I}
%\sum_{m=m_j}^{m_{j+1}-1}\frac{\ln q_{m+1}}{q_m^{1/2}}\geq \frac{1}{j^{3/2}} \text{ for all } j\in\mathcal{I}. 
%\end{equation}
%\end{proof}

\begin{thm}[Theorem C, \cite{A}]\label{h}
Let $S$ be a compact orientable surface.
For any compact subset $K\subset \mathrm{Diff}^2(S, \mathrm{Vol})$, there exist $0<\theta<1$ and integer $H>0$ satisfying the following property: let
$\{Q_n\}_{n\geq 0} \subset \N$ be a sequence such that $Q_0\geq H$ and $Q_n\geq H^{Q_{n-1}}$ for any $n \geq 1$, if for some $f\in K$ there exists $n\geq 0$ such that 
\begin{equation}\label{hypf}
\frac{1}{Q_n}\ln \|Df^{Q_n}\|>\theta^n,
\end{equation}
then $f$ has a hyperbolic periodic point.
\end{thm}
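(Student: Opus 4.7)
The plan is to prove the contrapositive: if $f \in K$ has no hyperbolic periodic point, then for every $n \geq 0$ one has $\frac{1}{Q_n}\ln \|Df^{Q_n}\| \leq \theta^n$, where $\theta$ and $H$ are chosen depending only on a uniform $C^2$ bound on $K$. The backbone of the argument is Katok's closing lemma for $C^{1+\alpha}$ surface diffeomorphisms together with a uniform Pesin-block analysis valid across the whole compact family $K$.

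First I would use Katok's closing lemma in its qualitative form: for a $C^2$ area-preserving surface diffeomorphism $g$, if some ergodic invariant measure has a nonzero Lyapunov exponent, then $g$ admits a hyperbolic periodic point. Thus, under our contradiction hypothesis, every $f$-invariant measure on $S$ has vanishing Lyapunov exponents, hence (by Ruelle's inequality) zero metric entropy, and by the variational principle and the compactness of $S$, the topological entropy of $f$ vanishes. This is the qualitative skeleton; the theorem asks for a quantitative version with constants uniform on $K$.

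Next I would run a compactness argument against (\ref{hypf}). Suppose for contradiction that no pair $(\theta, H)$ works; then there exist $\theta_k \to 0$, $H_k \to \infty$, sequences $\{Q_n^{(k)}\}_{n \geq 0}$ satisfying $Q_0^{(k)} \geq H_k$ and $Q_n^{(k)} \geq H_k^{Q_{n-1}^{(k)}}$, diffeomorphisms $f_k \in K$ without hyperbolic periodic orbits, and indices $n_k$ with $\frac{1}{Q_{n_k}^{(k)}} \ln \|Df_k^{Q_{n_k}^{(k)}}\| > \theta_k^{n_k}$. Choose $x_k \in S$ nearly realising this supremum and form the empirical measures $\mu_k = \frac{1}{Q_{n_k}^{(k)}} \sum_{i=0}^{Q_{n_k}^{(k)}-1} \delta_{f_k^i(x_k)}$; after extracting subsequences one has $f_k \to f_\infty$ in $C^2$, $f_\infty \in K$, and $\mu_k \to \mu_\infty$, an $f_\infty$-invariant probability. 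The super-exponential gap $Q_n^{(k)} \geq H_k^{Q_{n-1}^{(k)}}$ is used to split each orbit segment into many subsegments on which the Birkhoff averages of $\log\|Df_k\|$ remain bounded below by a fixed positive constant after passing to an appropriate sub-sub-segment via pigeonholing, so that $\int \log\|Df_\infty\| \, d\mu_\infty > 0$. By the Oseledets theorem, $\mu_\infty$ has a positive Lyapunov exponent, so Katok's closing lemma produces a hyperbolic periodic orbit for $f_\infty$. Persistence of hyperbolic periodic orbits under $C^1$-small perturbations then gives one for $f_k$ with $k$ large, the desired contradiction.

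The main obstacle is the delicate quantitative interplay between the two small parameters: the rate $\theta^n$ is tiny while the scale $Q_n$ is huge, and Katok's closing lemma is not automatically uniform as the exponent tends to $0$. The needed uniformity comes from controlling the Pesin blocks over the entire compact family $K$: the sizes of the local (un)stable manifolds and the distortion constants appearing in the closing construction must depend only on the common $C^2$ bound. Executing this uniform Pesin-block construction, together with the pigeonhole argument that transforms the single large-scale growth (\ref{hypf}) into Birkhoff-averaged growth for a limiting measure, is the technical heart of the proof, and is where the specific super-exponential gap condition $Q_n \geq H^{Q_{n-1}}$ is essential (weaker gaps would not allow one to absorb the errors produced by restricting to long subsegments).
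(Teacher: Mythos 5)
You should first note that the paper does not prove this statement: it is Theorem C of \cite{A}, quoted verbatim and used as a black box, so there is no internal proof to compare against. Judged on its own, your proposal has a genuine gap at its central step. In your compactness scheme the witnessed growth rate is $\theta_k^{n_k}$ with $\theta_k\to 0$, so the only lower bound you can transport to the limit is $\int\log\|Df_\infty\|\,d\mu_\infty\ \geq\ \liminf_k \theta_k^{n_k}=0$; and since $\det Df=1$ forces $\|Df(x)\|\geq 1$ pointwise, the inequality $\int\log\|Df_\infty\|\,d\mu_\infty\geq 0$ holds for \emph{every} invariant measure of \emph{every} area-preserving surface diffeomorphism and carries no information. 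The burden of the proof is therefore placed entirely on your claim that pigeonholing over subsegments produces Birkhoff averages of $\log\|Df_k\|$ bounded below by a \emph{fixed} positive constant. That claim is false in general: subadditivity only lets you descend from scale $Q_n$ to scale $Q_{n-1}$ (and ultimately to $Q_0$) while \emph{preserving} the rate $\theta^n$, never improving it to a constant independent of $n$; and a diffeomorphism such as $(x,y)\mapsto(x+y,y)$ on $\T^2$ shows that arbitrarily long orbit segments with strictly positive but vanishing average derivative growth can coexist with the total absence of hyperbolic periodic points. A soft compactness argument discards exactly the quantitative coupling between the rate $\theta^n$ and the scale $Q_n$ that the theorem is about.

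The missing idea is a \emph{quantitative} closing lemma: for $f$ in a $C^2$-bounded family, if $\frac{1}{Q}\ln\|Df^{Q}\|>\delta$ for a single $Q$ that is sufficiently large as an explicit function of $\delta$ (of the order $e^{C/\delta}$, obtained via a Pliss-type selection of hyperbolic times and Pesin local manifolds whose size degenerates controllably as $\delta\to 0$), then $f$ has a hyperbolic periodic point. The hypotheses $Q_0\geq H$ and $Q_n\geq H^{Q_{n-1}}$ are there precisely to guarantee $Q_n\geq N(\theta^n,K)$ for the appropriate threshold $N$ — the tower-type growth of $Q_n$ dominates the merely exponential growth of $\theta^{-n}$ — not, as you suggest, to absorb errors in a subsegment decomposition. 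Your proposal correctly identifies the uniformity of the Pesin-block construction over $K$ as the technical heart, but the architecture you build around it (empirical measures, a limiting measure with positive exponent, qualitative Katok plus persistence) cannot deliver the conclusion, because the positive exponent of the limit measure is exactly what fails to survive the limit.
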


\begin{proof}[Proof under condition $(1)$]
Let  $\theta$ and  $H$ be given by Theorem \ref{h} for $S = \T^2$ and $K = \{f\}$.  By \eqref{eq:super-liouvullen}, we can obtain a strictly increasing subsequence $\{n_j\}_{j\geq0} \subset \N$  satisfying
\begin{equation}\label{sac}
\norm{n_{j}\omega}_{\T^2}\leq e^{-n_{j}}\quad\text{for every }j\geq0.
\end{equation}
Moreover, after replacing $\{n_j\}_{j\geq0}$ by one of its subsequences, we can assume  that $n_{0}\geq H$ and $n_{j+1}\geq H^{n_{j}}$ for any $j\geq0$.

 Since $f$ is an irrational pseudo-rotation on $\T^2$, in particular, $f$ has no periodic points, Theorem \ref{h} implies that 
\begin{equation}\label{snb}
\|D f^{n_{j}}\|\leq e^{\theta^{j}n_{j}}<e^{\frac{n_{j}}{j^3}}\quad \text{ when } j \text{ is sufficiently large.}
\end{equation}

For each $n_j$, we choose a lift of $f^{n_j}$, denoted by $\tilde{f}_j$, with $\rho(\tilde{f}_j) = \norm{n_j \omega}_{\T^2}$ as in the proof of Theorem \ref{sup} under (1).
By Corollary \ref{cor}, (\ref{sac}), (\ref{snb}), we obtain for all sufficiently large $j$  that 
$$d_{C^0}(f^{n_{j}},\mathrm{Id}_{\mathbb{T}^2})<c(\kappa)^{\frac{1}{2}}(e^{-\frac{n_{j}}{2}}+2e^{-\frac{n_{j}}{2}}e^{\frac{n_{j}}{j^3}})\leq e^{-\frac{n_{j}}{4}}.$$
Recall that we have assumed $r \in \N_{\geq 2}$.
By (\ref{snb}), for some constant  $C_r > 0$ independent of $j$,
 $$\|D^r f^{n_{j}}\|< e^{C_r\frac{n_{j}}{j^3}}.$$ 
Therefore, by the convexity (Hadamard-Kolmogorov) inequality \cite{Kol}, we obtain
$$d_{C^{r-1}}(f^{n_{j}},\mathrm{Id}_{\mathbb{T}^2})=o(e^{-\frac{n_{j}}{j^{3}}}).$$
\end{proof}

\begin{proof}[Proof under condition $(2)$]
We again apply Lemma \ref{lem:Abdc} to $f$ and obtain $A, L$ and $f' = T_A f^{L} T_A^{-1} \in  \mathcal{D}_{\|A\|\kappa, \pi(\ell\beta,0), (1,0)}$, where $\beta = \cF(\overline{\omega})$. By  Remark \ref{ftof'}, it is enough to show that $f'$ is $C^{r-1}$ rigid. 

We denote by $\{q_n\}_{n\geq0}$ the sequence of denominators for $\beta$. Let  $\theta$ and  $H$ be given by Theorem \ref{h} for $S = \T^2$ and $K = \{f'\}$. After applying Lemma \ref{lem}  to $H$ and $\beta$, and upon passing to a subsequence, we can choose $\{q_{n_j}\}_{j\geq0}$ so that $q_{n_0} \geq H$, $q_{n_{j+1}} \geq H^{q_{n_j}}$ for any $j \geq 0$, and
\begin{equation*}
\norm{\rho((f')^{n_{j}})}_{\T^2}=\norm{q_{n_{j}}\ell \beta}_{\T}\leq \ell e^{-\frac{q_{n_{j}}}{j^2}}\quad\text{for every }j\geq0.
\end{equation*}
Then we can conclude the proof of (2) by following the argument in the proof of Theorem \ref{nonBrjunont} under condition (1).
\end{proof}

\section{Proofs of Theorem \ref{cortoplinear2} and  Theorem \ref{cortoplinearminimal}}
\label{sec:thmcorto}
Since $f \in \cal{C}_{\overline{\omega}}$, $f$ is of bounded mean motion. Thus
there exists a constant $\kappa > 0$ such that $f \in \cal{C}_{\kappa, \overline{\omega}}$, and let $\tilde{f}$ be an arbitrary lift of $f$ to $\R^2$. We note that such $f$ is topologically transitive by \cite[Theorem D]{J}. In the following, for each $x \in \T^2$, we denote by $\tilde{x}$ an arbitrary element of $\pi^{-1}(x)$.
\subsection{Proof of Theorem \ref{cortoplinear2}}\quad\smallskip
\begin{lem}\label{gfisuniformlybounded}
We have $G_{f} \subset \cup_{\alpha \in \R^2/\Z^2} \cal{C}_{2\kappa, \alpha}$.
\end{lem}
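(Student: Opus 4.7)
The strategy is to prove that any centralizer $g\in G_f$ is itself a pseudo-rotation whose bounded mean motion is controlled by $\kappa$. Fix lifts $\tilde f$ of $f$ satisfying $\rho(\tilde f)=\omega$ and $\|\tilde f^n(z)-z-n\omega\|\le \kappa$ for all $z,n$, and an arbitrary lift $\tilde g$ of $g$. Since $fg=gf$ on $\T^2$, the two lifts satisfy
\begin{equation*}
\tilde g\tilde f \;=\; \tilde f\tilde g + v
\end{equation*}
for some $v\in \Z^2$, independent of the choice of lifts.

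The first and most delicate step is to show $v=0$. Iterating the commutation yields $\tilde g\tilde f^n = \tilde f^n\tilde g + nv$, and applying the bounded mean motion of $f$ at both $z$ and $\tilde g(z)$ shows that the displacement $a_1:=\tilde g-\mathrm{id}$ satisfies
\begin{equation*}
\|a_1(\tilde f^n z) - a_1(z) - nv\| \;\le\; 2\kappa \qquad \forall\, z\in\R^2,\; n\in\N.
\end{equation*}
Since $a_1$ is $\Z^2$-periodic, hence uniformly bounded on $\R^2$, letting $n\to\infty$ forces $v=0$. Consequently $\tilde g^k\tilde f^n=\tilde f^n\tilde g^k$ for all $k,n\in\N$, and the same computation applied to $a_k:=\tilde g^k-\mathrm{id}$ yields $\|a_k(\tilde f^n z)-a_k(z)\|\le 2\kappa$.

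By \cite[Theorem D]{J} (as noted in the paragraph preceding the lemma), $f$ is topologically transitive, so there exists $z_0$ whose $f$-orbit is dense in $\T^2$. The continuity and $\Z^2$-periodicity of $a_k$ then upgrade the previous estimate to $\|a_k(y) - a_k(z_0)\| \le 2\kappa$ for every $y \in \R^2$. I would next introduce the candidate rotation vector
\begin{equation*}
\rho(\tilde g) \;:=\; \int_{\T^2} \bigl(\tilde g(w)-w\bigr)\, d\lambda(w),
\end{equation*}
where the integrand descends from $\R^2$ to $\T^2$ by $\Z^2$-periodicity. The telescoping identity $a_{k+l}=a_k\circ \tilde g^l + a_l$, together with the area preservation of $g$, yields $\int a_k\, d\lambda = k\rho(\tilde g)$. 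Integrating the uniform bound from the previous step against $\lambda$ then produces $\|a_k(z)-k\rho(\tilde g)\|\le 2\kappa$ for every $z,k$, exhibiting $g \in \cC_{2\kappa,\,\rho(\tilde g)\bmod \Z^2}$.

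The hardest step conceptually is killing the integer drift $v$; once this is achieved, the inclusion follows by a clean density-plus-averaging argument, with area preservation crucially ensuring that $\rho(\tilde g)$ is additive in $k$, so that a single oscillation bound propagates uniformly to every iterate.
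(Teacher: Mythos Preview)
Your argument is correct and follows essentially the same route as the paper: use commutativity together with the bounded mean motion of $f$ to bound the oscillation of $a_k=\tilde g^k-\mathrm{id}$ along $f$-orbits, spread this via transitivity, and integrate against $\lambda$ to compare with $k\rho(\tilde g)$. You are in fact more careful than the paper in one respect: the paper's identity $\tilde g^n\tilde f^m=\tilde f^m\tilde g^n$ tacitly assumes the lifts commute, which is exactly your step $v=0$.

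One small imprecision in your last step: integrating $\|a_k(y)-a_k(z_0)\|\le 2\kappa$ over $y$ yields $\|a_k(z_0)-k\rho(\tilde g)\|\le 2\kappa$ only at the chosen transitive point $z_0$, and a triangle inequality would then give $4\kappa$ at a general $z$. To recover the sharp constant $2\kappa$, either note (as the paper does) that your orbit estimate $\|a_k(\tilde f^n z)-a_k(z)\|\le 2\kappa$ holds for \emph{every} $z$, so density gives $\|a_k(y)-a_k(y')\|\le 2\kappa$ for all pairs $y,y'$ before integrating, or run your density-plus-integration argument at every transitive point (a residual set) and pass to the limit.
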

\begin{proof}
We fix an arbitrary $g \in G_{f}$, and let $\tilde{g}$ be an arbitrary lift of $g$ to $\R^2$. Note that
\aryst
\rho(\tilde{g}) = \int_{\T^2} (\tilde{g}(\tilde{x}) - \tilde{x}) d\lambda(x).
\earyst
 For any integer $n > 0$, we define a continuous function by
\aryst
\Phi_n : \T^2 \to \R^{2}, \quad x \mapsto \tilde{g}^n(\tilde{x}) - \tilde{x}.
\earyst
If there exist $x_0, x_1 \in \T^2$ such that $\norm{\Phi_n(x_0) - \Phi_n(x_1)} > 2\kappa$, then by continuity, there exist open neighbourhoods  $U_i \subset \T^2$ of $x_i$ for $i=0,1$, such that for any $y_i \in U_i$, $i=0,1$, we have $\norm{\Phi_n(y_0) - \Phi_n(y_1)} > 2\kappa$. By the transitivity of $f$, there exist $z \in U_0$, and an integer $m > 0$ such that $f^{m}(z) \in U_1$. Fix an arbitrary $\tilde{z} \in \pi^{-1}(z)$. By commutativity, we have 
\aryst
\Phi_n(z) - \Phi_n(f^{m}(z)) &=& \tilde{g}^n(\tilde{z}) - \tilde{z} - (\tilde{g}^n(\tilde{f}^{m}(\tilde{z})) - \tilde{f}^{m}(\tilde{z})) \\
&=& [\tilde{f}^{m}(\tilde{z}) -  \tilde{z} - m\rho(\tilde{f}) ]-[\tilde{f}^{m}(\tilde{g}^n(\tilde{z})) - \tilde{g}^n(\tilde{z}) - m\rho(\tilde{f})]
\earyst
By $f \in \cal{C}_{\kappa, \overline{\omega}}$, the above two terms in the brackets have norms at most $\kappa$. While by our choices of $z,m,U_0$ and $U_1$, we have $\norm{\Phi_n(z) - \Phi_n(f^{m}(z))} > 2\kappa$. This is a contradiction.
Thus for any $x_0, x_1 \in \T^2$, we have $\norm{\Phi_n(x_0) - \Phi_n(x_1)} \leq 2\kappa$. This implies 
\aryst
\left\|\Phi_n(x) - \int_{\T^2} \Phi_n(y) d\lambda(y)\right\| \leq \int_{\T^2} \norm{\Phi_n(x) - \Phi_n(y)}d\lambda(y) \leq 2\kappa.
\earyst
Our lemma follows from the observation that $\int_{\T^2} \Phi_n(y) d\lambda(y) = n\rho(\tilde{g})$.
\end{proof}
For any $g \in G_{f}$, we define a continuous function $D_{g} : \T^2 \to \R^2/\Z^2$,
\aryst D_{g}(x) = (\tilde{g}(\tilde{x}) - \tilde{x}) \mod \Z^2,
\earyst
where $\tilde{g}$ is a lift of $g$.
We choose a point $x_0 \in \T^2$ so that $x_0$ has a dense orbit under $f$. We define two maps as follows
\aryst
\phi_1 : G_{f} \to \R^2/\Z^2, &&\quad \phi_1(g) = \rho(g) = \int D_{g}(x) d\lambda(x), \\
\phi_2 : G_{f} \to \T^2, &&\quad \phi_2(g) = g(x_0).
\earyst
By definition, $G_f$ is a topological group under the compositions in ${\rm Homeo}_{*}(\T^2)$ with respect to the $C^0$ topology of ${\rm Homeo}_{*}(\T^2)$. It is clear that maps $\phi_1$ and $\phi_2$ are continuous. By definition, the range of $\phi_2$ is dense. Since $\overline{\omega}$ is totally irrational, we note that the range of $\phi_1$ is also dense: this follows from $\{f^n\}_{n \in \Z} \subset G_f$, and $\phi_1(f^{n}) = n\rho(f) = n \overline{\omega}$. Moreover, the map $\phi_1$ is a group homomorphism. Indeed, for any $g_1, g_2 \in G_f$, we have
\aryst
\phi_1(g_1g_2) = \int D_{g_1g_2} d\lambda 
= \int (D_{g_1} \circ g_2 + D_{g_2}) d\lambda = \int (D_{g_1} + D_{g_2}) d\lambda = \phi_1(g_1) + \phi_1(g_2).
\earyst

The key point is the following lemma.
\begin{lem}\label{trivialkernel}
If $g \in G_f$ satisfies $\rho(g) = 0$, then $g = {\rm Id}_{\T^2}$.
\end{lem}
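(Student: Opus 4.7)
The plan is to combine the $C^{r-1}$-rigidity of $f$ with Corollary~\ref{cor} applied to the commuting product $g f^{n_j}$ along a well-chosen sequence $n_j \to \infty$. Since $\rho(g) = 0 \in \R^2/\Z^2$, I can choose a lift $\tilde g$ of $g$ with $\rho(\tilde g) = 0 \in \R^2$, so that by Lemma~\ref{gfisuniformlybounded} this lift satisfies $\|\tilde g^n(z) - z\| \le 2\kappa$ uniformly in $z \in \R^2$ and $n \in \N$. A preliminary step is to check that $\tilde g$ actually commutes with the chosen lift $\tilde f$ of $f$: since $gf = fg$, the defect $z \mapsto \tilde g \tilde f(z) - \tilde f \tilde g(z)$ is $\Z^2$-valued and continuous, hence a constant $v \in \Z^2$; iterating and using the $\Z^2$-equivariance of $\tilde g$ gives $\tilde g^n \tilde f - \tilde f \tilde g^n = n v$, and since both sides remain at bounded distance from $\tilde f$ (by the bounded orbits of $\tilde g$ and the uniform continuity of $\tilde f$), this forces $v = 0$.

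With the lifts commuting, one has $(\tilde g \tilde f^n)^m = \tilde g^m \tilde f^{mn}$, and the triangle inequality applied to $\|\tilde g^m \tilde f^{mn}(z) - z - mn\omega\|$ yields a uniform bound of $3\kappa$. Thus $g f^n$ is an area-preserving pseudo-rotation lying in $\mathcal{C}_{3\kappa,\, n\overline{\omega}}$, whose chosen lift has rotation vector exactly $n\omega \in \R^2$. I would then import from the proof of Theorem~\ref{nonBrjunont}(1) the sequence $\{n_j\}$: using \eqref{eq:super-liouvullen} together with the growth-gap Theorem~\ref{h} applied to $K = \{f\}$, one may arrange $\|n_j \omega\|_{\T^2} \le e^{-n_j}$ and $\|D f^{n_j}\| \le e^{n_j / j^3}$ for all sufficiently large $j$, which in particular gives $f^{n_j} \to \mathrm{Id}_{\T^2}$ in $C^{r-1}$.

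The heart of the argument is to apply Corollary~\ref{cor} to $g f^{n_j}$ with parameters $(\kappa, \omega)$ replaced by $(3\kappa,\, n_j \omega)$, obtaining
\[
d_{C^0}(g f^{n_j}, \mathrm{Id}_{\T^2}) \le c(3\kappa)^{1/2} \|n_j \omega\|_{\T^2}^{1/2} + \max_{z \in \T^2} \mathrm{diam}\bigl(g f^{n_j}(B(z, r_j))\bigr),
\]
where $r_j := c(3\kappa)^{1/2} \|n_j \omega\|_{\T^2}^{1/2}$. The first term clearly vanishes. For the second, the estimate $\mathrm{diam}(g f^{n_j}(B(z, r_j))) \le 2 \|Dg\| \cdot \|D f^{n_j}\| \cdot r_j$ combines the constant factor $\|Dg\|$ (using $g \in \diff^{r-1}$ with $r - 1 \ge 1$) with the subexponential growth of $\|D f^{n_j}\|$ and the super-exponential decay of $r_j$, and so also tends to $0$. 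Hence $g f^{n_j} \to \mathrm{Id}_{\T^2}$ in $C^0$; since $\T^2$ is compact, the convergence $f^{n_j} \to \mathrm{Id}_{\T^2}$ in $C^0$ also yields $f^{-n_j} \to \mathrm{Id}_{\T^2}$ in $C^0$, and therefore $g = \lim_j (g f^{n_j}) \circ f^{-n_j} = \mathrm{Id}_{\T^2}$.

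The main delicate point I expect is the quantitative balancing of the growth of $\|D f^{n_j}\|$ against the decay of $\|n_j \omega\|_{\T^2}$ in the diameter term of Corollary~\ref{cor}; this is precisely the interplay that the super-Liouvillean hypothesis, together with the growth-gap theorem used in Theorem~\ref{nonBrjunont}, is designed to provide.
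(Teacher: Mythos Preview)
Your proof is correct and follows essentially the same route as the paper: show that $gf^{n_j}$ is an area-preserving pseudo-rotation with uniformly bounded mean motion and rotation number tending to~$0$, apply Corollary~\ref{cor}, and conclude $g=\mathrm{Id}_{\T^2}$ from $gf^{n_j}\to\mathrm{Id}_{\T^2}$ and $f^{n_j}\to\mathrm{Id}_{\T^2}$ in $C^0$.

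The paper's argument is more economical in two places. First, rather than proving by hand that the lifts commute and deriving the $3\kappa$ bound, it simply notes that $f\in G_f$ (since $f\in\diff^r\subset\diff^{r-1}$ centralizes itself), hence $gf^{n_j}\in G_f$, so Lemma~\ref{gfisuniformlybounded} immediately gives $gf^{n_j}\in\cal{C}_{2\kappa,\alpha}$. Second, instead of re-invoking the growth-gap theorem to control $\|Df^{n_j}\|$, it just quotes Theorem~\ref{nonBrjunont} as a black box: $f^{n_j}\to\mathrm{Id}_{\T^2}$ in $C^{r-1}$ already gives a uniform $C^1$ bound on $gf^{n_j}$, so the diameter term in Corollary~\ref{cor} is bounded by a fixed Lipschitz constant times $r_j\to0$. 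Your version works but does more than necessary; one minor wrinkle is that you compute the bounded-mean-motion constant for the lift $\tilde g\tilde f^{n_j}$ with rotation vector $n_j\omega\in\R^2$, and then silently pass to a lift with rotation vector of norm $\|n_j\omega\|_{\T^2}$ when invoking Corollary~\ref{cor}---this is harmless since the constant in \eqref{eq:bmm} is lift-independent, but worth stating explicitly.
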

\begin{proof}
By Theorem \ref{nonBrjunont}, there exists a sequence $\{n_j\}_{j \geq 1}$ in $\N$ such that $f^{n_j} \to {\rm Id}_{\T^2}$ in the $C^{r-1}$ topology. Thus the maps $\{ gf^{n_j} \}_{j \geq 1}$ have uniformly bounded $C^{r-1}$ norms, and $\rho(gf^{n_j}) = \rho(f^{n_j}) \to 0$ as $j \to \infty$. Moreover, since $gf^{n_j} \in G_f$ for all $j \geq  1$, by Lemma \ref{gfisuniformlybounded} and Corollary \ref{cor}, we see that
\aryst
d_{C^0}(gf^{n_j},{\rm Id}_{\T^2}) \to 0, \mbox{ as } j \to \infty.
\earyst
Since we clearly have $d_{C^0}(f^{n_j}, {\rm Id}_{\T^2}) \to 0$ as $j \to \infty$, thus $g = {\rm Id}_{\T^2}$. 
\end{proof}
We have the following immediate corollary.
\begin{cor}\label{phi1phi2injections}
Both $\phi_1$ and $\phi_2$ are injections.
\end{cor}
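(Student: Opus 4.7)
The plan is to deduce both injectivities directly from Lemma \ref{trivialkernel} together with the group structure on $G_f$ and the density of the orbit of $x_0$, so the two cases require essentially no new work.

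For $\phi_1$, the idea is to exploit the fact, already observed in the preceding paragraph, that $\phi_1$ is a group homomorphism. Given $g_1, g_2 \in G_f$ with $\phi_1(g_1) = \phi_1(g_2)$, the element $h := g_2^{-1} g_1$ lies in $G_f$ (since $G_f$ is a group) and satisfies $\rho(h) = \phi_1(h) = \phi_1(g_1) - \phi_1(g_2) = 0$ in $\R^2/\Z^2$. Lemma \ref{trivialkernel} then forces $h = \mathrm{Id}_{\T^2}$, i.e.\ $g_1 = g_2$. The only small point to check is that $\rho$ on $G_f$ really is a homomorphism into $\R^2/\Z^2$ (not just additive up to integer choices of lifts), but this was already established above using the integral formula for $\rho$.

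For $\phi_2$, the argument will not need Lemma \ref{trivialkernel} at all, only the commutativity with $f$ and the fact that the $f$-orbit of $x_0$ is dense in $\T^2$. If $g_1(x_0) = g_2(x_0)$, set $h := g_2^{-1} g_1 \in G_f$, so $h(x_0) = x_0$ and $hf = fh$. Then for every $n \in \Z$,
\[
h(f^n(x_0)) = f^n(h(x_0)) = f^n(x_0),
\]
so $h$ fixes the whole forward and backward $f$-orbit of $x_0$, a dense subset of $\T^2$. Continuity of $h$ then gives $h = \mathrm{Id}_{\T^2}$, hence $g_1 = g_2$.

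There is no real obstacle here: once Lemma \ref{trivialkernel} and the homomorphism property of $\phi_1$ are in hand, the injectivity of $\phi_1$ is a one-line kernel argument, and the injectivity of $\phi_2$ is the standard observation that a centralizer of a transitive system is determined by its value at a single point with dense orbit. The slightly subtle step is making sure that the chosen lifts are consistent when verifying that $\phi_1$ is a homomorphism (already handled in the preceding paragraph via $\phi_1(g) = \int D_g\, d\lambda$), and that the density of $\{f^n(x_0)\}_{n \in \Z}$ follows from our choice of $x_0$ as a point with dense orbit (which exists since $f$ is topologically transitive, as noted at the start of the section).
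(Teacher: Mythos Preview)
Your argument for $\phi_1$ is exactly the paper's: $\phi_1$ is a homomorphism, so injectivity reduces to $\mathrm{Ker}(\phi_1)=\{\mathrm{Id}_{\T^2}\}$, which is Lemma~\ref{trivialkernel}.

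Your argument for $\phi_2$ is correct but genuinely different from the paper's. The paper argues as follows: if $h:=g_2^{-1}g_1$ fixes $x_0$, then by Lemma~\ref{gfisuniformlybounded} the map $h$ is a pseudo-rotation, and a pseudo-rotation with a fixed point has $\rho(h)=0$; then Lemma~\ref{trivialkernel} forces $h=\mathrm{Id}_{\T^2}$. You instead bypass both lemmas and use only the commutation $hf=fh$ together with the density of $\{f^n(x_0)\}_{n\in\Z}$: the centralizer $h$ fixes every point of this dense orbit, hence equals the identity by continuity. Your route is more elementary and works for any continuous centralizer of a transitive map, without appealing to the rotation-vector machinery or the rigidity Lemma~\ref{trivialkernel}. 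The paper's route, while slightly heavier here, has the virtue of reducing both injectivities to the single key Lemma~\ref{trivialkernel}, which is the conceptual core of the section.
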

\begin{proof}
Since $\phi_1$ is a group homomorphism, it is enough to show that ${\rm Ker}(\phi_1) = \{{\rm Id}_{\T^2}\}$. This is precisely the content of Lemma \ref{trivialkernel}.

Suppose that there exist $g_1,g_2\in G_f$ such that $\phi_2(g_1) = \phi_2(g_2)$. Then $g_2^{-1}g_1(x_0) = x_0$. Since $g_2^{-1}g_1 \in G_f$, by Lemma \ref{gfisuniformlybounded}, $g_2^{-1}g_1$ is a pseudo-rotation. This implies that $\rho(g_2^{-1}g_1) = 0$, and by Lemma \ref{trivialkernel}, $g_1 = g_2$.
\end{proof}
In particular, $G_f$ is isomorphic to a subgroup of $\R^2/\Z^2$.
By Theorem \ref{nonBrjunont} and \cite[Chap XII, (3.2)]{H}, $G_f$ is also uncountable.

To finish the proof, we first note that the pre-compactness of $G_f$ in the $C^0$ topology is clearly necessary for $f$ to be topologically conjugate to a translation. Thus it suffices to prove the other implication. Assume that $G_f$ is compact in the $C^0$ topology. 
Then the image of $\phi_1, \phi_2$ are both dense and compact, and thus $\phi_1$,$\phi_2$ are surjections.
By Corollary \ref{phi1phi2injections}, $\phi_1, \phi_2$ are also injections. By the compactness of $G_f$, we conclude that $\phi_1,\phi_2$ are homeomorphisms. 

We let $h = \phi_1 \phi_2^{-1}$. It is direct to verify the relation
\aryst
T_{\rho(f)}h = h f.
\earyst

\subsection{Proof of Theorem \ref{cortoplinearminimal}}\quad\smallskip

We follow the same lines as in the proof of Theorem \ref{cortoplinear2} before Lemma \ref{trivialkernel} for $G^{0}_f$ in place of $G_f$. The proof of Lemma \ref{trivialkernel} under the conditions of Theorem \ref{cortoplinearminimal} is the following. 
Let $g \in G^0_f$ satisfy that $\rho(g) = 0$. Then by \cite{F3}, $g$ has a fixed point $z$. By commutativity, every point in the orbit of $z$ under $f$ is fixed by $g$. Since by hypothesis $f$ is  minimal, thus $g = {\rm Id}_{\T^2}$. This completes the proof of Lemma \ref{trivialkernel}.
By definition, $G^{0}_f$ is closed in the $C^0$ topology. Thus if $G^{0}_f$ is pre-compact then it is compact. We conclude the proof following the same lines as in the proof of Theorem \ref{cortoplinear2}.

\section{Proof of Theorem \ref{thm example}}\label{secthm4}
It is enough to prove the following proposition. Theorem \ref{thm example} will then be a corollary.
\begin{prop}\label{prop example}
There exists an area-preserving and minimal pseudo-rotation $f \in \diff^{\infty}(\T^2)$ which has bounded mean motion, and super-Liouvillean rotation vector, and satisfies the following: for any $\varepsilon > 0$, there exist two points $x, y \in \T^2$ with $d(x,y) < \varepsilon$, and an integer $N > 0$ such that $d(f^{N}(x), f^{N}(y)) \geq \frac{1}{1000}$.
\end{prop}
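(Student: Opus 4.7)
I will use the Anosov--Katok construction to build $f$ as a $C^\infty$-limit $f = \lim_n f_n$, with $f_n := H_n T_{\alpha_n} H_n^{-1}$, $H_n := g_1 \circ \cdots \circ g_n$, rational $\alpha_n \in \Q^2$, and each $g_n \in \diff^\infty(\T^2, {\rm Vol})$ commuting with $T_{\alpha_{n-1}}$. This commutativity lets me choose $|\alpha_{n+1} - \alpha_n|$ small enough (relative to $\|H_n\|_{C^{n+1}}$ and to a prescribed iterate count $k_n$) to force $\|f_{n+1} - f_n\|_{C^{n+1}} \leq 2^{-n}$ as well as $\|f^{k_n} - f_n^{k_n}\|_{C^0} < 1/1000$, so the limit $f$ exists in $\diff^\infty(\T^2, {\rm Vol})$ with $\rho(f) = \alpha := \lim_n \alpha_n$. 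I will use the independent freedom in the $\alpha_n$ to arrange that $\alpha$ is totally irrational and super-Liouvillean. Minimality will be obtained by the standard AK device of choosing $g_n$ so that the $T_{\alpha_n}$-orbit of a marked point under $f_n$ forms an $\epsilon_n$-net in $\T^2$, with $\epsilon_n \to 0$.

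\textbf{Bounded mean motion and semi-conjugacy.} For any lift $\tilde H_n$ of $H_n$, writing $w := \tilde H_n^{-1}(z)$, the commutativity yields
\begin{equation*}
\tilde f_n^{\,k}(z) - z - k\alpha_n \;=\; [\tilde H_n(w + k\alpha_n) - (w + k\alpha_n)] - [\tilde H_n(w) - w],
\end{equation*}
and $\tilde H_n - \mathrm{Id}$ is $\Z^2$-periodic, so the right-hand side is bounded uniformly in $k$ and $z$ by twice the $C^0$-oscillation of $\tilde H_n - \mathrm{Id}$. I will choose each $g_n$ so that $\|\tilde g_n - \mathrm{Id}\|_{C^0}$ is summable in $n$, which keeps this oscillation bounded by a fixed constant $\kappa_0$ uniformly in $n$; then $f$ inherits bounded mean motion with bound $\kappa_0$, and Theorem \ref{thmJager} produces a semi-conjugacy $h : \T^2 \to \T^2$ homotopic to $\mathrm{Id}$ with $h \circ f = T_\alpha \circ h$.

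\textbf{Failure of equicontinuity.} The obstruction to $h$ being a conjugacy is the growth of $\|DH_n\|_{C^0}$: even when $\|\tilde H_n - \mathrm{Id}\|_{C^0}$ stays bounded, the derivatives $\|Dg_n\|_{C^0}$ can blow up, producing strong, inhomogeneous local expansion. I will design each $g_n$ so that $H_n$ has regions $U_n, V_n \subset \T^2$ on which $\|DH_n\|$ is respectively very small and very large, arranged so that $V_n = T_{k_n \alpha_n}(U_n)$ for some $k_n \leq q_n$ (possible because $\{k\alpha_n\}_{k=0}^{q_n - 1}$ is $1/q_n$-dense in $\T^2$). Picking $a_n, b_n \in U_n$ close to each other, I set $x_n := H_n(a_n)$, $y_n := H_n(b_n)$, so $d(x_n, y_n)$ is small (as $\|DH_n\|$ is small on $U_n$), while $f_n^{k_n}(x_n) = H_n T_{k_n\alpha_n}(a_n)$ and $f_n^{k_n}(y_n) = H_n T_{k_n\alpha_n}(b_n)$ land in $V_n$ and are separated by $\|DH_n\|_{V_n}\, d(a_n, b_n) \geq 1/200$. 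The $C^0$-closeness $\|f^{k_n} - f_n^{k_n}\|_{C^0} < 1/1000$ built into the construction then gives the desired separation of $f^{k_n}(x_n), f^{k_n}(y_n)$ by at least $1/200 - 2/1000 > 1/1000$.

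\textbf{Main difficulty.} The hardest part will be balancing (a) the uniform oscillation bound $\kappa_0$ on $\tilde H_n - \mathrm{Id}$ (needed for bounded mean motion) against (b) unbounded, strongly inhomogeneous growth of $\|DH_n\|_{C^0}$ (needed for the separation property). These are mutually compatible because for area-preserving $C^\infty$-diffeomorphisms the $C^0$-norm and the operator norm of the derivative are essentially independent: one can keep $\|g_n - \mathrm{Id}\|_{C^0}$ small while allowing $\|Dg_n\|$ to be arbitrarily large, at the cost of squeezing the region of large derivative into an ever-smaller set. Tuning this tradeoff, while also meeting the simultaneous requirements of rapid $C^\infty$-convergence, minimality, and the arithmetic constraints on $\alpha$, is the bulk of the technical work.
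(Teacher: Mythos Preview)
Your plan is essentially the paper's own approach: an Anosov--Katok scheme $f_n = H_n T_{\omega_n} H_n^{-1}$ with $H_n = h_1\cdots h_n$, summable $\|\tilde h_n - \mathrm{Id}\|_{C^0}$ to force a uniform bound on $\|\tilde H_n - \mathrm{Id}\|_{C^0}$ (hence bounded mean motion in the limit), unbounded $\|DH_n\|$ to produce the separation property, and the usual AK devices for minimality and arithmetic control of $\omega$. The only substantive difference is packaging: the paper tracks two explicit points $x_n,y_n$ through the construction via a tailor-made lemma (their Lemma~\ref{PartI3lem h n}) that places $h_{n+1}(x_{n+1})=x_n$, $h_{n+1}(y_{n+1})=y_n$ while keeping a translated pair close, whereas you phrase the same mechanism in terms of regions $U_n,V_n$ of controlled derivative related by $T_{k_n\alpha_n}$. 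One small correction: for area-preserving maps $\|DH_n\|\geq 1$ everywhere, so ``$\|DH_n\|$ very small on $U_n$'' should read ``$DH_n$ close to an isometry on $U_n$'' (or simply take $d(a_n,b_n)$ small and $U_n$ a region where $H_n$ is tame); you also need $b_n-a_n$ aligned with the expanding direction of $DH_n$ on $V_n$, which is exactly the kind of bookkeeping the paper's explicit point-tracking handles.
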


\begin{proof}
Let us denote $\Gamma = (\Q \times \R)/ \Z^2 \cup (\R \times \Q)/\Z^2 \subset \T^2$.
We will construct $h_n \in \Diff^{\infty}(\T^2, {\rm Vol}) \cap \mathrm{Homeo_*}(\T^2)$ and $\omega_n = (\omega_{n,1}, \omega_{n,2}) = q_n^{-1}\hat{\omega}_n \in \Q^2$ with $\hat{\omega}_n  \in \Z^2$, $q_n \in \N$, $q_n> 10^n$ for each $n \geq 1$, such that the following holds for each $n \geq 1$:
\enmt
\item[$(a1)_{n}$] For some lift of $h_n$ to $\R^2$, denoted by $\tilde{h}_n$, we have $d_{C^0}(\tilde{h}_n, \Id_{\R^2}) < 2^{-n}$. In particular, by setting $H_n := h_{1} \cdots h_n \in \Diff^{\infty}(\T^2, \rm Vol )$, then $H_n \in \mathrm{Homeo_*}(\T^2)$, and the map $\tilde{H}_n := \tilde{h}_1 \cdots \tilde{h}_n$ is a lift of $H_n$, and we have $d_{C^0}(\tilde{H}_n, \Id_{\R^2}) \leq \sum_{i=1}^{n}d_{C^0}(\tilde{h}_{i}, \Id_{\R^2}) < 1 - 2^{-n}$;\footnote{\,We stress that although the distances between $h_n$ and ${\rm Id_{\T^2}}$ are summable, $\{H_n\}$ does not converge. See $(a2)$. }
\item[$(a2)_{n}$] There exist $x_n, y_n \in \T^2$ with $x_n-y_n \notin \Gamma$ such that $$d(x_n, y_n)  <  10^{-2n} \,\text{   and   }\, d(H_n(x_n), H_n(y_n)) > \frac{1}{1000};$$
\item[$(a3)_{n}$] Set $f_n := H_n T_{\omega_n}(H_n)^{-1}  \in \Diff^{\infty}(\T^2, \rm Vol ) \cap Homeo_*(\T^2)$. There exist $x^{(n)}, y^{(n)} \in \T^2$, $m_n \in \N$ such that $d(x^{(n)}, y^{(n)}) < 2^{-n}$ and $d(f^{m_n}_n(x^{(n)}), f^{m_n}_n(y^{(n)})) > \frac{1}{1000}$;
\item[$(a4)_{n}$] For any $(k_1,k_2,k_3) \in \{-n, \cdots, n\}^{3} \setminus \{(0,0,0)\}$, we have $k_1 \omega_{n,1} + k_2 \omega_{n,2} + k_3 \neq 0$;
\item[$(a5)_{n}$] For any $z \in \T^2$, the set $\{f_n^{k}(z)\}_{k \in \Z}$ is $2^{-n}$-dense in $\T^2$.
\eenmt
Here we say that a set $K \subset \T^2$ is $\sigma$-dense for some $\sigma > 0$, if for any $x \in \T^2$ there exists $y \in K$ such that $d(x,y) < \sigma$.

Note that $(a1)_{n}$ and $(a3)_{n}$ imply the following:  the map $F_n := \tilde{H}_nT_{\omega_n} \tilde{H}_n^{-1}$ is a lift of $f_n$, and for  any integer $k \geq 1$ we have
\ary
\sup_{z \in \R^2}\norm{F_n^{k}(z) - z - k \omega_n} &=& \sup_{z \in \R^2}\norm{ \tilde{H}_n (\tilde{H}_n^{-1}(z) + k\omega_n) -\tilde{H}_n(\tilde{H}_n^{-1}( z)) - k \omega_n} \nonumber \\
&=&\sup_{z \in \R^2}\norm{ \tilde{H}_n (z + k\omega_n) -\tilde{H}_n(z) - k \omega_n}  \nonumber \\
&\leq& 2d_{C^0}(\tilde{H}_n, \Id_{\R^2})  < 10. \label{PartI3term 111}
\eary
Moreover, we let $\epsilon_n > 0$ be a sufficiently small real number so that 
for any $f \in \mathrm{Homeo}(\T^2)$ satisfying $d_{C^0}(f, f_n) < \epsilon_n$, for any $\omega' = (\omega'_1, \omega'_2) \in \R^2$ satisfying $\norm{\omega' - \omega_n} < \epsilon_n$, and any $F' \in \mathrm{Homeo}(\R^2)$ satisfying $d_{C^0}(F', F_{n}) < \epsilon_n$, we have
\ary 
&&\label{PartI3term 1} \norm{(F')^{k}(z) - z - k \omega'} < 10, \quad \forall z \in \R^2, 1 \leq k \leq n, \\
&&  \label{PartI3term 2} d(f^{m_n}(x^{(n)}), f^{m_n}(y^{(n)})) > \frac{1}{1000}, \\
&& \label{PartI3term 3}  k_1 \omega'_1  + k_2 \omega'_2 + k_3 \neq 0,  \forall (k_1,k_2,k_3) \in \{-n, \cdots, n\}^{3} \setminus \{(0,0,0)\}, \\
&& \label{PartI3term 4} \{f^{k}(z)\}_{k \in \N} \mbox{ is $2^{-n+1}$-dense in $\T^2$ for any } z \in \T^2.
\eary
We can see that such $\epsilon_n$ exists by \eqref{PartI3term 111}, $(a3)_n$, $(a4)_n$ and $(a5)_n$.
Without loss of generality, we assume that $\epsilon_k > \epsilon_{k+1}$ for any  $k \geq 1$.

We will further assume that for each integer $n \geq 1$, the following is true:
\enmt
\item[$(a6)_n$] we have $d_{\Diff^{\infty}(\T^2)}(f_{n+1}, f_n), d_{C^0}(F_{n+1}, F_{n}), \norm{\omega_{n+1} - \omega_{n}} < 2^{-n}\epsilon_{n}$.
\eenmt

We let $h_1 = \Id_{\T^2}$ and $\omega_{1} = q_1^{-1}\hat{\omega}_1 = (\frac{1}{100}, \frac{1}{10})$, where $q_1 = 100$ and $\hat{\omega}_1 = (1,10)$. It is direct to verify $(a1)_1, \cdots, (a5)_1$.
Assume that we have constructed $(h_i, \omega_i, q_i, \hat{\omega}_i)$ for any $1 \leq i \leq n$, satisfying $(a1)_i - (a5)_i$ for any $1 \leq i \leq n$, and $(a6)_i$ for any $1 \leq i \leq n-1$. We construct $(h_{n+1}, \omega_{n+1}, q_{n+1}, \hat{\omega}_{n+1})$ as follows.

Let $f_n, F_n, H_n$, $\omega_n$, $x_n$, $y_n$, $x^{(n)}$, $y^{(n)}$, $m_n$, $\epsilon_n$ be given as above.  We first note the following lemma.
\begin{lem} \label{PartI3lem h n}
For any integer $q \geq 2$, any $x \neq y \in \T^2$ such that $x-y \notin \Gamma$, any $\sigma > 0$, there exists
 $h \in \diff^{\infty}(\T^2, {\rm Vol}) \cap Homeo_*(\T^2)$ such that 
\enmt
\item[$(1)$] $h$ commutes with both $T_{(\frac{1}{q}, 0)}$ and $T_{(0, \frac{1}{q})}$;
\item[$(2)$] There exist $x', y' \in \T^2$ such that $d(x', y') < \sigma$,  $x'-y' \notin \Gamma$,  and
\aryst
 &&h(z') = z, \quad \mbox{for } z = x,y, \\
&&d(hT_{(\frac{1}{2q}, 0)}(x'), hT_{(\frac{1}{2q}, 0)}(y') ) < \sigma;
\earyst
\item[$(3)$] There exists $\tilde{h}$, a lift of $h$, satisfying $d_{C^{0}}(\tilde{h}, \Id_{\R^2}) \leq 3d(x,y) + \frac{2}{q}$.
\eenmt
\end{lem}
\begin{proof}
Since by definition $\T^2 = (\R / \Z)^2$, for any intervals $I_1, I_2 \subset \R$, we can naturally identify $I_1 \times I_2$ with a subset of $\T^2$. We will first consider the case where $x = (0,0)$ and $y = (y_1,y_2) \in (0,\frac{1}{2})^2 \setminus \{(0,0)\}$. In this case, we have $y_1+y_2 \leq 2d(x,y)$. Moreover, there exists a unique $k \in \Z$ with $-\frac{k}{q} < y_2 < -\frac{k-1}{q}$, such that $ T_{(0, \frac{k}{q})}(y) \in (0,1) \times (0, \frac{1}{q})$ and $|\frac{k}{q}| \leq \norm{y}$.

Let $\varphi_1 : \R \to (-1,1)$ be a $\frac{1}{q}-$periodic $C^{\infty}$ function such that:
(1) $\varphi_1(y_2) \in (y_1 - \frac{1}{q}, y_1)$; (2) $\varphi_1(0) = 0$; (3) and $\norm{\varphi_1} \leq \norm{y}$. Such $\varphi_1$ exists by $y_2 \notin \frac{1}{q}\Z$ since $x-y \notin \Gamma$. Define $\tilde{\Phi}_1 \in \diff^{\infty}(\R^2, {\rm Vol})$ as
\aryst
\tilde{\Phi}_1(w_1,w_2) = (w_1 + \varphi_1(w_2), w_2 - \frac{k}{q}).
\earyst
Then we have $d_{C^0}(\tilde{\Phi}_1, \Id_{\R^2}) \leq |\frac{k}{q}| + \norm{\varphi_1} \leq 2\norm{y}$.
It is direct to see that $\tilde{\Phi}_1$ is a lift of a map $\Phi_1 \in \Diff^{\infty}(\T^2, \rm Vol) \cap Homeo_*(\T^2)$, and
\aryst
\Phi^{-1}_1(y) = (y_1 -\varphi_1(y_2 ), y_2 + \frac{k}{q})  \in (0,\frac{1}{q}) \times (0, \frac{1}{q}), \ \mbox{ and } \
\Phi^{-1}_1(x) = ( 0, \frac{k}{q}).
\earyst

Let $\varphi_2 : \R \to (-1,1)$ be a $\frac{1}{q}-$periodic $C^{\infty}$ function such that: (1) $\varphi_2(0) =  \frac{k}{q}$; (2) $\varphi_2(y_1 -\varphi_1(y_2)) = 0$; (3) and $\norm{\varphi_2} \leq |\frac{k}{q}| \leq \norm{y}$.
Define $\tilde{\Phi}_2 \in \diff^{\infty}(\R^2, {\rm Vol})$ as
\aryst
\tilde{\Phi}_2(w_1,w_2) = (w_1, w_2 + \varphi_2(w_1)).
\earyst
Then we have $d_{C^0}(\tilde{\Phi}_2, \Id_{\R^2}) \leq \norm{y}$.
It is direct to see that $\tilde{\Phi}_2$ is a lift of a map $\Phi_2 \in \Diff^{\infty}(\T^2, \rm Vol) \cap Homeo_*(\T^2)$, and
\aryst
\Phi_2^{-1}\Phi_1^{-1}(y) =  \Phi_1^{-1}(y)  \in (0,\frac{1}{q}) \times (0, \frac{1}{q}), \ \mbox{ and } \
\Phi_2^{-1}\Phi_1^{-1}(x) = ( 0, 0).
\earyst

By direct construction, we can find: a map $g \in \diff^{\infty}(\T^2, {\rm Vol})  \cap Homeo_*(\T^2)$ satisfying $g([0,\frac{1}{q})^2) = [0,\frac{1}{q})^2$, commuting with $T_{(\frac{1}{q}, 0)}$ and $T_{(0, \frac{1}{q})}$; and $y' \in  [0,\frac{1}{4q})^2 \setminus \Gamma$ satisfying $\norm{y'} < \sigma$, such that :
\aryst
g(0,0) = (0,0), \quad g(y') = \Phi_2^{-1}\Phi_1^{-1}(y), \quad d(g(\frac{1}{2q}, 0), g(y'+(\frac{1}{2q},0))) < \norm{\Phi_1\Phi_2}_{C^1}^{-1}\sigma.
\earyst
It is direct to see that there exists a lift of $g$, denoted by $\tilde{g}$, such that  
\aryst
d_{C^0}(\tilde{g}, \Id_{\R^2}) \leq \frac{2}{q}.
\earyst
Let $\tilde{h} = \tilde{\Phi}_1\tilde{\Phi}_2 \tilde{g}$  and $h = \Phi_1 \Phi_2 g$.
Note that $h \in  \diff^{\infty}(\T^2, {\rm Vol})  \cap Homeo_*(\T^2)$, and 
\aryst
d_{C^0}(\tilde{h}, \Id_{\R^2}) \leq d_{C^0}(\tilde{\Phi}_1, \Id_{\R^2}) + d_{C^0}(\tilde{\Phi}_2, \Id_{\R^2}) + d_{C^0}(\tilde{g}, \Id_{\R^2}) \leq 3\norm{y} + \frac{2}{q}.
\earyst
It is then direct to verify (1)-(3).

We now consider the case for an arbitrary $x \in \T^2$. Without loss of generality, we can assume that $y-x \in (0,\frac{1}{2})^2$. The other cases are handled by symmetrical constructions.

In this case, we apply our lemma to $(0,y-x)$ in place of $(x,y)$, to obtain $(h', x'', y'')$ satisfying (1)-(3) in place of $(h,x',y')$. Then we let
\aryst
h = T_{x} h' T_{-x}, \quad x' = x + x'', \quad y' = x + y''.
\earyst
Then (1) and (2) are clear for $(h,x',y')$. We verify (3) by noting that
\aryst
d_{C^0}(\tilde{h}, \Id_{\R^2}) = d_{C^0}(\tilde{h'}\tilde{T}_{-x}, \tilde{T}_{-x}) = d_{C^0}(\tilde{h'}, \Id_{\R^2}) \leq 3d(0, y-x) + \frac{2}{q} = 3d(x,y) + \frac{2}{q}.
\earyst
\end{proof}
We recall that $\omega_n = q_n^{-1} \hat{\omega}_n$. By $(a2)_n$ ($x_n-y_n \notin \Gamma$), we can apply Lemma \ref{PartI3lem h n} to $q = q_n$, $x = x_n$, $y = y_n$ and $\sigma =$ $ \max(1, \norm{H_n}_{C^1})^{-1} 10^{-2n-4}$ to obtain $(h_{n+1}, x_{n+1}, y_{n+1}) := (h, x', y')$. Then $h_{n+1} \in  \diff^{\infty}(\T^2, {\rm Vol})  \cap Homeo_*(\T^2)$; $x_{n+1}-y_{n+1} \notin \Gamma$ and $d(x_{n+1}, y_{n+1}) < 10^{-2n-2}$; $h_{n+1}$ commutes with $T_{\omega_{n}}$; and there exists $\tilde{h}_{n+1}$, a lift of $h_{n+1}$, satisfying
\aryst
d_{C^0}(\tilde{h}_{n+1}, \Id_{\R^2}) \leq 3d(x_n, y_n) + \frac{2}{q_n} < 2^{-n-1},   \qquad
\earyst
where the last inequality of the above follows from $(a2)_n$ and our hypothesis that $q_n > 10^{n}$. This verifies $(a1)_{n+1}$.
Moreover by Lemma \ref{PartI3lem h n} (2), we have $h_{n+1}(z_{n+1}) = z_{n}$ for $z = x,y$. By $(a1)_n$, we have $H_{n+1} = H_{n} h_{n+1}$. By  $(a2)_n$, we have $d(H_{n+1}(x_{n+1}), H_{n+1}(y_{n+1})) = d(H_n(x_n), H_n(y_n)) > \frac{1}{1000}$. Thus the above discussions verify $(a2)_{n+1}$.

Let 
\aryst
z^{(n+1)}& :=& H_{n+1} T_{(\frac{1}{2q_n}, 0)}(z_{n+1}) \quad \mbox{for } z = x,y.
\earyst
By Lemma \ref{PartI3lem h n} (2), we see that 
\aryst
d(x^{(n+1)}, y^{(n+1)}) \leq \norm{H_n}_{C^1}d(h_{n+1}T_{(\frac{1}{2q_n}, 0)}(x_{n+1}), h_{n+1}T_{(\frac{1}{2q_n}, 0)}(y_{n+1})) < 10^{-n-1}.
\earyst
We thereby verify the first part of $(a3)_{n+1}$.

For any $\gamma \in \R^2$ , we set
\aryst
G^{\gamma}_n := H_{n+1}  T_{(-\frac{1}{2q_n}, 0)+ \gamma} H_{n+1}^{-1}.
\earyst
By Lemma \ref{PartI3lem h n} (2), we have
\aryst
G^{(0,0)}_{n}(z^{(n+1)}) = H_{n+1} (z_{n+1}) = H_n(z_n) \quad \mbox{for }  z=x,y.
\earyst

Then by continuity  and $(a2)_n$, there exists $\kappa > 0$ such that for any $\gamma \in \R^2$ with $\norm{\gamma} < \kappa$, we have 
\ary \label{PartI3d G G}
d(G^{\gamma}_n(x^{(n+1)}), G^{\gamma}_n(y^{(n+1)})) > \frac{1}{1000}.
\eary
Without loss of generality, we also assume that
\ary \label{PartI3kappa 2 n 1}
\kappa < 2^{-n-1}\norm{H_{n+1}}_{C^1}^{-1}.
\eary

Set $\omega_{n+1} = \omega_n + \beta_{n+1}$ for some $\beta_{n+1} \in \Q^{2} \setminus \{(0,0)\}$ of the form
\aryst
\beta_{n+1} = (\beta_{n+1,1}, \beta_{n+1,2}) := \frac{1}{q_{n}r_{n+1}}(1,v).
\earyst
Here $v \in \N$ satisfies $v > 100\max(\kappa^{-1}, n+1)$; and let $r_{n+1} \geq 100(n+1)^2\kappa^{-1}v$ be a large integer, to be determined later.
Note that for any $(k_1,k_2) \in \{-(n+1), \cdots, (n+1)\}^2\setminus \{(0,0)\}$, we have $k_1\beta_{n+1,1} + k_2\beta_{n+1,2}  \neq 0$, and
\aryst
 \norm{ k_1\beta_{n+1,1} + k_2\beta_{n+1,2} }_{\T} &<& \frac{20(n+1)v}{q_{n}r_{n+1}} < \frac{1}{2q_{n}}.
\earyst

For any $(k_1,k_2, k_3) \in \{-(n+1), \cdots, (n+1)\}^3\setminus \{(0,0, 0)\}$, 
we have $q_{n}(k_1\omega_{n,1} + k_2\omega_{n,2} ) \in \Z$, and hence
\aryst
k_1 \omega_{n+1,1} + k_2 \omega_{n+1,2} + k_3 = (k_1 \omega_{n,1} + k_2 \omega_{n,2}) + ( k_1 \beta_{n+1,1} + k_2 \beta_{n+1,2} +  k_3) \neq 0.
\earyst
This verifies $(a4)_{n+1}$.

By choosing $r_{n+1}$ sufficiently large, we can ensure that: (1) $\norm{\beta_{n+1}} < 2^{-n}\epsilon_n$;  
(2)
 \aryst
f_{n+1} &=& H_{n+1} T_{\omega_{n+1}} (H_{n+1})^{-1} \\
&=& H_{n}(h_{n+1} T_{\omega_{n}} T_{\beta_{n+1}} h_{n+1}^{-1}) H_{n}^{-1} \\
&=& H_{n}(T_{\omega_n} h_{n+1} T_{\beta_{n+1}} h_{n+1}^{-1}) H_{n}^{-1}
\earyst
is $2^{-n-1}\epsilon_{n}-$close to $f_n$ in $\Diff^{\infty}(\T^2) \cap \mathrm{Homeo}_*(\T^2)$; and (3), by a similar reason as above, $F_{n+1}$ is $2^{-n-1}\epsilon_{n}-$close to $F_n$ in $C^0(\R^2)$.
 This verifies $(a6)_{n}$.
Moreover, note that for any $m = kq_{n}$ with $k \in \Z$, we have $m \omega_{n+1} = k\hat{\omega}_{n} + \frac{(k, kv)}{r_{n+1}}$, and hence
\aryst
f_{n+1}^{m} = H_{n+1} T_{ \frac{(k, kv)}{r_{n+1}}} H_{n+1}^{-1}.
\earyst
By our choices of $\beta_{n+1}$, $v$, $r_{n+1}$ and $\kappa$ (see \eqref{PartI3kappa 2 n 1}), it is direct to see that:

(1) for any $\kappa$-dense subset of $\T^2$, denoted by $K$, the set $H_{n+1}(K)$ is $2^{-n-1}$-dense in $\T^2$;

(2) for any $z \in \T^2$,  $\{ (z + m\omega_{n+1}) \mod \Z^2 \}_{m \in \N }$ is $\kappa$-dense in $\T^2$.

Thus for any $z \in \T^2$, the set $\{ f_{n+1}^{m}(z)  \}_{m \in \N} = \{H_{n+1}(H_{n+1}^{-1}(z) + m \omega_{n+1})   \}_{m \in \N}$ is $2^{-n-1}$-dense in $\T^2$. This verifies $(a5)_{n+1}$. Moreover 
for some $m \in \N$,  $f_{n+1}^{m} = G_{n}^{\gamma}$
for some $\gamma$ with $\norm{\gamma} < \kappa$. 
Then by \eqref{PartI3d G G}, we verify the second part of  $(a3)_{n+1}$.

The above discussions show that, by choosing $r_{n+1}$ sufficiently large, we can ensure that  $(h_{n+1}$, $\omega_{n+1}, q_{n+1}, \hat{\omega}_{n+1})$  satisfies $(a1)_{n+1}-(a5)_{n+1}$ and $(a6)_n$, and thus complete the induction. Moreover, by choosing $r_{n+1}$ sufficiently large at each step of the induction, it is easy to ensure that the limit of $\omega_{n}$ satisfies \eqref{eq:super-liouvullen}.

We construct the sequence $\{ f_n \}_{n \geq 1}$ by induction.
By $(a6)$, $\{ f_n \}_{n \geq 1}$ converges in the $C^{\infty}$ topology to some map $f  \in \Diff^{\infty}(\T^2, \rm Vol) \cap Homeo_*(\T^2)$; $\{ \omega_n \}_{n \geq 1}$ converges to some $\omega \in \R^2$ satisfying \eqref{eq:super-liouvullen}; and $\{ F_n \}_{n \geq 1}$ converges to  some $F \in C^{0}(\R^2)$, which is clearly a lift of $f$. Moreover for any integer $n \geq 1$, we have $d_{\diff^{\infty}(\T^2)}(f_{n}, f), d_{C^0}(F_{n}, F), \norm{\omega - \omega_n} < \epsilon_n$. By \eqref{PartI3term 1}, $f$ has bounded mean motion. By \eqref{PartI3term 2}, for any $\epsilon > 0$,  there exist $x,y \in \T^2$ satisfying $d(x,y) < \epsilon$, and an integer $m > 0$, such that $d(f^{m}(x), f^{m}(y)) \geq \frac{1}{1000}$.  By \eqref{PartI3term 3}, we can see that  $\omega$ is totally irrational and super-Liouvillean \eqref{eq:super-liouvullen}.  By \eqref{PartI3term 4}, $f$ is minimal. This concludes the proof.
\end{proof}

\begin{proof}[Proof of Theorem \ref{thm example}]
For $d = 2$, Theorem \ref{thm example} is reduced to Proposition \ref{prop example}. 
Indeed, the last condition in Proposition \ref{prop example}  implies that $f$ is not topologically conjugate to a translation; and by Theorem \ref{thmJager}, $f$ is semi-conjugate to a translation by a surjection homotopic to $\id_{\T^2}$.
For $d > 2$, let $f'$ be given by Proposition \ref{prop example}, and let $\alpha \in \R^{d-2}$ be a totally irrational vector to be determined in due course. We set $f = f' \times T_{\alpha} \in \diff^{\infty}(\T^d, {\rm Vol})$. Observe that for any $\varepsilon > 0$, there exist $x,y \in \T^d$, and an integer $N  > 0$ such that $d(f^{N}(x), f^{N}(y)) \geq \frac{1}{1000}$. Thus $f$ is not topologically conjugate to a translation, but is semi-conjugate to one. Since $f'$ is minimal and $\T^2$ is connected, it is clear that for any integer $\ell > 0$, $f^{\ell}$ is also minimal. Then by letting $\alpha$ to be sufficiently well-approximated by rational vectors, we can ensure that $f$ is also minimal on $\T^d$, and $\rho(f)$ is super-Liouvillean. This concludes the proof.
\end{proof}

\section*{\bf{Acknowledgements}} 
We would like to thank Artur Avila, Bassam Fayad, Patrice Le Calvez and Rapha\"el Krikorian for their helpful conversations and comments. The first author would like to thank Yinshan Chang and Ruijun Wu for the beginning of the discussions. The second author wishes to thank Nikolaos Karaliolios and Sebastian van Strien for asking him the question on linearization, and remarks which helped improving the text; also Guan Huang and Jianlu Zhang for stimulating conversations.

%\section{Appendix}
%\subsection{The set of super-Liouvillean vector being dense}
%
%By a standard argument (see, e.g., \cite[Appendix A.2]{Br}), we have the following lemma.
%\begin{lem}\label{dense}
%The super-Liouvillean vector set $\mathcal{L}_*$ is a dense $G_\delta$  subset of $\mathbb{R}^2$. 
%\end{lem}
%\begin{proof}
%For each pair $(\frac{p_1}{q},\frac{p_2}{q})\in \mathbb{Q}^2$ with at least one of $\frac{p_i}{q}$ ($i=1,2$) being relatively prime and every $k\in\mathbb{N}$ we define the following open set 
%
%$$O_k(p_1,p_2,q)=\left\{z\in \mathbb{R}^2\,\big|\, 0<\left\|z-\left(\frac{p_1}{q},\frac{p_2}{q}\right)\right\|<\frac{1}{qe^{kq}}\right\}.$$
%By the definition, if $z\in O_k(p_1,p_2,q)$ then $\|qz\|_0<\frac{1}{e^{kq}}$. Then the countable union
%
%\begin{equation}U_k=\bigcup_{\substack{(p_1,p_2,q)\in\mathbb{Z}\times\mathbb{Z}\times\mathbb{N}\\ \text{with}\,gcd(p_1,q)=1\,\text{or}\,gcd(p_2,q)=1}}O_k(p_1,p_2,q)\end{equation}
%is also open in $\mathbb{R}^2$. Obviously, the set $$\left\{\left(\frac{p_1}{q},\frac{p_2}{q}\right)\mid (p_1,p_2,q)\in\mathbb{Z}\times\mathbb{Z}\times\mathbb{N} \text{ with }\,(p_1,q)=1\,\text{or}\,(p_2,q)=1\right\}$$ is dense in $\mathbb{Q}^2$ and every $(\frac{p_1}{q},\frac{p_2}{q})$ lies in the closure of $O_k(p_1,p_2,q)$. Hence very rational vector $\alpha\in\mathbb{Q}^2$ lies in the closure of $U_k$. 
%This implies that $U_k$ is dense in $\mathbb{R}^2$ for all $k\in\mathbb{N}$. Therefore, Lemma \ref{dense} follows from the fact that
%$\mathcal{L}_*=\bigcap_{k\in\mathbb{N}}U_k$ and  the classical Baire category Theorem.
%\end{proof}

\end{document}